\documentclass[11pt]{article}

\usepackage[a4paper,margin=1in]{geometry}
\usepackage{amsmath,amssymb,amsthm,mathtools,mathrsfs}
\usepackage{enumitem}
\usepackage{microtype}
\usepackage{graphicx}
\usepackage[percent]{overpic}
\usepackage{xcolor}
\usepackage{placeins}
\usepackage[hyperfootnotes=false]{hyperref}

\allowdisplaybreaks

\hypersetup{
  pdftitle={Connective Constants on Nested Fractal Graphs},
  pdfauthor={Hua Qiu and Yifan Wang},
  colorlinks=true,
  linkcolor=blue!55!black,
  citecolor=blue!55!black,
  urlcolor=blue!55!black
}
\setlist{leftmargin=2em,itemsep=2pt,topsep=4pt}

\newtheorem{theorem}{Theorem}
\newtheorem{proposition}{Proposition}[section]
\newtheorem{lemma}[proposition]{Lemma}
\theoremstyle{definition}
\newtheorem{definition}[proposition]{Definition}
\theoremstyle{remark}
\newtheorem{remark}[proposition]{Remark}

\newcommand{\R}{\mathbb R}
\newcommand{\cI}{\mathcal I}
\newcommand{\cT}{\mathcal T}
\newcommand{\cW}{\mathcal W}
\newcommand{\cS}{\mathcal S}
\newcommand{\cC}{\mathcal C}
\newcommand{\cB}{\mathcal B}
\newcommand{\bZ}{\mathbf Z}
\newcommand{\bA}{\mathbf A}
\newcommand{\diam}{\operatorname{diam}}

\title{Connective Constants on Nested Fractal Graphs}
\author{HUA QIU AND YIFAN WANG}
\date{}

\begin{document}
\maketitle
\begingroup
\renewcommand{\thefootnote}{}
\makeatletter
\renewcommand{\@makefntext}[1]{\noindent #1}
\makeatother
\footnotetext{\emph{2020 Mathematics Subject Classification.}
Primary 82B41, Secondary 28A80.\\
\emph{Key words and phrases.}
self-avoiding walk, connective constant, nested fractal graph,
ratio limit.\\
The research of Hua Qiu was supported by the National Natural Science
Foundation of China (grants 12471087 and 12531004).}
\endgroup

\begin{abstract}
We study self-avoiding walks on the canonical one-sided graphs of
Lindstr\o m nested fractals.  We prove that the connective
constant \(\mu\) exists and identify \(\log\mu\) with the critical
inverse temperature of a finite-dimensional boundary-state
renormalization.  If the boundary-state partition vectors are bounded at
criticality, then the fixed-length counts \(c_n\) satisfy two-sided polynomial
bounds around \(\mu^n\).  We also prove that \(h\)-flexibility implies
\(c_{n+h}/c_n\to\mu^h\).  For regular polygonal \(N\)-gaskets, we derive
exact crossing recursions, determine the smallest flexibility step \(h\), and
obtain explicit algebraic connective constants for the \(6\)- and
\(9\)-gaskets.  The Vicsek graph has no flexibility step, and its
successive ratios do not converge.
\end{abstract}

\tableofcontents

\section{Introduction}

\begin{figure}[!t]
\centering
\begin{minipage}[t]{0.23\textwidth}
  \centering
\includegraphics[width=\linewidth]{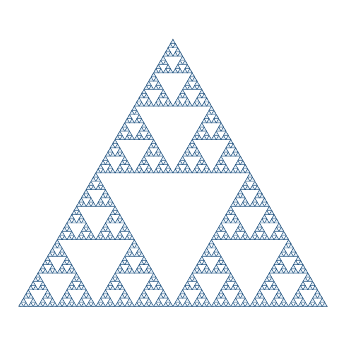}
  \par\vspace{2pt}
  \footnotesize (a) Sierpi\'nski gasket
\end{minipage}\hfill
\begin{minipage}[t]{0.23\textwidth}
  \centering
\includegraphics[width=\linewidth]{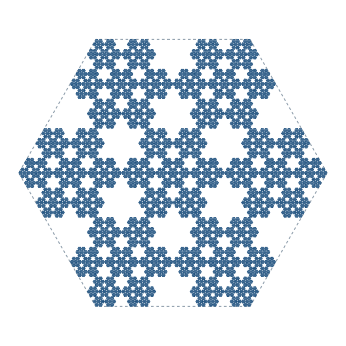}
  \par\vspace{2pt}
  \footnotesize (b) Lindstr\o m snowflake
\end{minipage}\hfill
\begin{minipage}[t]{0.23\textwidth}
  \centering
\includegraphics[width=\linewidth]{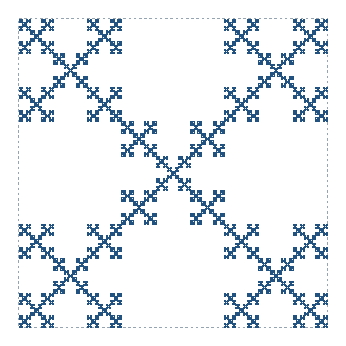}
  \par\vspace{2pt}
  \footnotesize (c) Vicsek set
\end{minipage}\hfill
\begin{minipage}[t]{0.23\textwidth}
  \centering
\includegraphics[width=\linewidth]{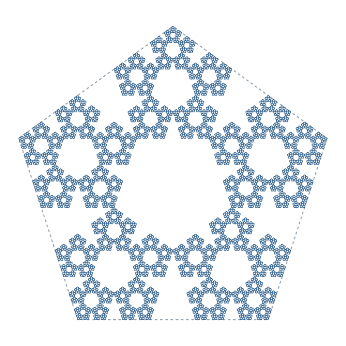}
  \par\vspace{2pt}
  \footnotesize (d) Pentagasket
\end{minipage}
\caption{Four standard nested fractals.}
\label{fig:nested-fractal-examples}
\end{figure}

\begin{figure}[!t]
\centering
\hspace*{.6cm}%
\begin{overpic}[width=.54\textwidth]{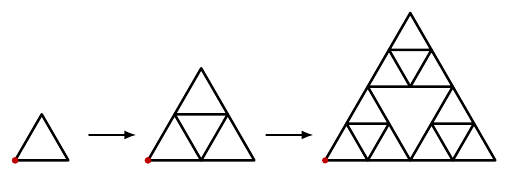}
  \put(2.2,1.5){\makebox(0,0)[r]{\small\color{red!65!black}\(O\)}}
  \put(8.2,-2.2){\makebox(0,0){\(F_0\)}}
  \put(39.4,-2.2){\makebox(0,0){\(F_1\)}}
  \put(80.5,-2.2){\makebox(0,0){\(F_2\)}}
\end{overpic}
\vspace{8pt}
\caption{The one-sided blow-up \(F_0\subset F_1\subset F_2\) for the
Sierpi\'nski gasket.  The red vertex is the fixed root \(O\).}
\label{fig:sg-blowup}
\end{figure}

Self-avoiding walks arose as lattice models for long-chain polymers.
Early systematic mathematical study and numerical estimation began with
Hammersley and Morton~\cite{HammersleyMorton1954}, and Hammersley later isolated their
exponential growth rate, now called the \emph{connective
constant}~\cite{Hammersley1957}.  It has since become one of the basic
quantities in the subject; see Madras and Slade~\cite{MadrasSlade1993}.
Let \(c_n\) denote the number of \(n\)-step self-avoiding
walks from the origin on \(\mathbb Z^d\), with \(d\ge2\).
Submultiplicativity shows that
\(\mu=\lim_{n\to\infty}c_n^{1/n}\) exists and that
\(\mu^n\le c_n\).  Hammersley and Welsh proved~\cite{HammersleyWelsh1962}
\[
  c_n
  \le
  \mu^n\exp\{O(\sqrt n)\}
\]
and Hutchcroft later replaced
\(O(\sqrt n)\) by \(o(\sqrt n)\)~\cite{Hutchcroft2018}.  The precise
subexponential correction remains unknown in dimensions \(2,3,4\);
for \(d\ge5\), the lace expansion gives
\(c_n\sim A\mu^n\)~\cite{HaraSlade1992,HaraSlade1992Lace}.
The convergence of fixed-step counting ratios is a separate and more
delicate problem.  On \(\mathbb Z^d\), Kesten's two-step ratio limit
\(c_{n+2}/c_n\to\mu^2\) is closely tied to local pattern
replacement~\cite{Kesten1963}.
Exact values are exceptionally rare: among the standard two-dimensional
lattices, the only rigorously known one is the honeycomb value
\(\mu=\sqrt{2+\sqrt2}\), proved by Duminil-Copin and
Smirnov~\cite{DuminilCopinSmirnov2012}.

Nested fractal graphs are self-similar but not transitive.  A walk rooted
at one vertex therefore cannot usually be split into two walks rooted in
the same way.  Finite ramification provides a different tool: the
connections made by a walk at the boundary of a finite graph cell can be
recorded by finitely many states.  Ben-Avraham and Havlin used this idea
to study self-avoiding walks on finitely ramified
fractals~\cite{BenAvrahamHavlin1984}.  Hattori, Hattori and Kusuoka developed an
exact renormalization for self-avoiding paths on the pre-Sierpi\'nski
gasket~\cite{HHK1990}.  Building on this special two-variable crossing
recursion, Hattori and Kusuoka established the existence of the
connective constant for the Sierpi\'nski gasket~\cite{HattoriKusuoka1992}.
See also the construction of a self-avoiding process as a renormalization
limit on the Sierpi\'nski gasket~\cite{HattoriHattori1991}, and its
higher-dimensional extensions~\cite{HHK1993,HattoriTsuda2002}.

\begin{figure}[!t]
\centering
\begin{tabular}{@{}c@{\hspace{1.95cm}}c@{\hspace{1.05cm}}c@{}}
  \raisebox{30pt}[0pt][0pt]{%
    \begin{overpic}{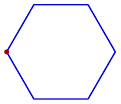}
      \put(4,52){\makebox(0,0)[r]{\small\color{red!65!black}\(O\)}}
    \end{overpic}}
  &
  \raisebox{10pt}[0pt][0pt]{%
    \begin{overpic}{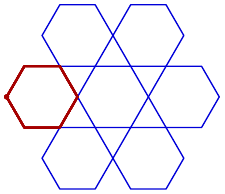}
      \put(4,52){\makebox(0,0)[r]{\small\color{red!65!black}\(O\)}}
    \end{overpic}}
  &
  \begin{overpic}{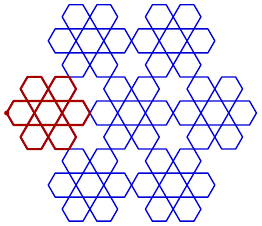}
    \put(3,52){\makebox(0,0)[r]{\small\color{red!65!black}\(O\)}}
  \end{overpic}
  \\[-1pt]
  \raisebox{22pt}[0pt][0pt]{\(F_0\)}
  &\raisebox{8pt}[0pt][0pt]{\(F_1\)}
  &\raisebox{6pt}[0pt][0pt]{\(F_2\)}
\end{tabular}
\caption{The first three finite graphs of the hexagonal snowflake.  The
red vertex is the fixed root \(O\), and the red subgraphs in \(F_1\) and
\(F_2\) identify the copy of the preceding graph.  The one-sided
construction therefore expands away from \(O\).}
\label{fig:snowflake-levels}
\end{figure}

On a general
nested fractal, there may be more boundary vertices, and the part of a
walk inside one graph cell may have several path components.  The
Sierpi\'nski gasket recursion therefore does not extend directly.
In this paper, our first result is a geometric proof that the
connective constant exists on nested fractal graphs.  The argument
combines graph cells and the Lindstr\o m reflection symmetries with a
finite convolution inequality, which bounds walks from arbitrary
starting vertices by finite products of walk counts from the
distinguished root.  In particular, the proof requires no dynamical
analysis.

For the Sierpi\'nski gasket, Hattori and Kusuoka obtained polynomial
upper and lower corrections to the exponential growth of the fixed-length
walk counts~\cite{HattoriKusuoka1992}.  Their upper bound localizes a walk
by the smallest graph cell containing it and estimates the corresponding
critical partition functions.  The rooted counts on the gasket are not
submultiplicative, so the direct lower bound available on
\(\mathbb Z^d\) has no immediate analogue.  Hattori and Kusuoka instead
derived it from a substantially more detailed analysis of the special
two-variable renormalization dynamics, including the limiting distribution
of the critical path length and the regularity of its density.  We retain
the scale-localization idea in the upper bound, but derive the lower bound
directly from the finite convolution estimate used in our existence proof,
without a separate analysis of the critical dynamics.  This is our second
result: the boundary-state
renormalization identifies its critical inverse temperature with the
logarithm of the connective constant and, whenever the partition vectors
remain bounded at criticality, gives corresponding two-sided polynomial
bounds for the fixed-length walk counts.

Our third result is a fixed-step ratio theorem.  We show that
\(h\)-flexibility forces the ratio of the walk counts
at lengths \(n+h\) and \(n\) to converge to the \(h\)-th power of the
connective constant.

For regular polygonal \(N\)-gaskets, which include the Sierpi\'nski
gasket as the case \(N=3\), we can say considerably more.  We derive
exact crossing recursions, identify the crossing critical inverse
temperature, and determine the smallest flexibility step \(h\).  As on
the honeycomb lattice~\cite{DuminilCopinSmirnov2012}, these
recursions yield explicit algebraic connective constants for \(N=6,9\).

We begin with the homogeneous iterated function system used throughout.
Fix \(d\ge1\), let
\(\cI=\{0,\ldots,M-1\}\), where \(M\ge2\), and consider contractive
similitudes
\begin{equation}
  \Psi_i(x)=\lambda^{-1}\mathsf U_i x+\nu_i,
  \qquad i\in\cI,
  \qquad \lambda>1,
  \label{eq:ifs}
\end{equation}
where each \(\mathsf U_i\) is orthogonal.  Their attractor is the unique nonempty
compact set \(K\) satisfying
\[
  K=\bigcup_{i=0}^{M-1}\Psi_i(K).
\]
The compact set \(K\) is the self-similar fractal generated by the
iterated function system.
Let \(p_i\) be the fixed point of \(\Psi_i\).  A fixed point \(p_i\) is
\emph{essential} if there exist a fixed point \(p_j\) and distinct
indices \(k,\ell\in\cI\) such that
\(\Psi_k(p_i)=\Psi_\ell(p_j)\).  Write
\[
  V_0=\{q_0,\ldots,q_{b-1}\}
\]
for the set of essential fixed points, where \(b=\#V_0\ge2\).

For a word \(w=w_1\cdots w_r\in\cI^r\) with \(r\ge1\), write
\[
  \Psi_w=\Psi_{w_1}\circ\cdots\circ\Psi_{w_r},
  \qquad
  K_w=\Psi_w(K),
  \qquad
  \partial K_w=\Psi_w(V_0).
\]
For \(r=0\), let \(\emptyset\) be the empty word, set
\(\cI^0=\{\emptyset\}\), and put
\(\Psi_\emptyset=\mathrm{id}\) and \(K_\emptyset=K\).
The set \(K_w\) is a \emph{level-\(r\) fractal cell}, and
\(\partial K_w\) is its \emph{boundary}.  The term \emph{fractal cell}
will always refer to a subset of \(K\); finite cells of the graph will
be called graph cells.  For distinct \(x,y\in V_0\),
let \(\mathscr R_{x,y}\) be reflection in the affine hyperplane
\(\{z\in\mathbb R^d:|z-x|=|z-y|\}\).

\begin{definition}[Lindstr\o m nested fractal~\cite{Lindstrom1990}]
\label{def:nested_fractal}
The attractor \(K\) of the similitudes in \eqref{eq:ifs} is a
\emph{nested fractal} if the following conditions hold.
\begin{enumerate}[label=\textup{(NF\arabic*)}]
\item There is a nonempty bounded open set \(\mathcal U\) such that
  \(\Psi_i(\mathcal U)\cap\Psi_j(\mathcal U)=\varnothing\) for \(i\ne j\) and
  \(\bigcup_i\Psi_i(\mathcal U)\subset\mathcal U\).
\item The set \(K\) is connected.
\item For every distinct \(x,y\in V_0\), the reflection
  \(\mathscr R_{x,y}\) preserves \(V_0\) and, for each \(r\ge1\),
  permutes the level-\(r\) fractal cells together with their boundaries.
\item For distinct \(i,j\in\cI\),
  \[
    \Psi_i(K)\cap\Psi_j(K)
    =
    \Psi_i(V_0)\cap\Psi_j(V_0).
  \]
\end{enumerate}
\end{definition}

Four standard examples are shown in
Figure~\ref{fig:nested-fractal-examples}.

Definition~\ref{def:nested_fractal} is the homogeneous form of
Lindstr\o m's definition.  Iterating \textup{(NF4)} shows that the same
boundary-intersection identity holds for any two distinct fractal cells
of the same level.  Much of the work on nested fractals adopts the
\emph{one-point intersection condition} as a standing axiom:
\begin{equation}
  \#\bigl(\Psi_u(V_0)\cap\Psi_v(V_0)\bigr)\le1
  \qquad
  (u\ne v,\ |u|=|v|).
  \label{eq:one-point}
\end{equation}
This condition appears in
\cite[Proposition~2.2(4)]{FitzsimmonsHamblyKumagai1994} and as
assumption \textup{(A5)} in
\cite{Kumagai1997}.  Together with \textup{(NF4)}, it says that two
distinct fractal cells of the same level are either disjoint or meet at
one common boundary point.  Whether it follows from the Lindstr\o m
axioms is unknown; see~\cite[Remark~5.25]{Barlow1998}.

On \(V_0\), join the pairs at the smallest positive Euclidean distance.
Thus, with
\[
  \delta_0=\min\{|x-y|:x,y\in V_0,\ x\ne y\},
\]
let
\[
  F_0=(V_0,E_0),
  \qquad
  E_0=\bigl\{\{x,y\}\subset V_0:|x-y|=\delta_0\bigr\}.
\]
Choose \(O\in V_0\), relabel the maps if necessary, and assume that
\(\Psi_0(O)=O\).  The finite approximating graphs and the canonical
one-sided infinite graph are
\begin{equation}
  F_m
  =
  \bigcup_{w\in\cI^m}
    \bigl(\Psi_0^{-m}\circ\Psi_w\bigr)(F_0),
  \qquad
  F_\infty=\bigcup_{m\ge0}F_m.
  \label{eq:finite-graphs}
\end{equation}
Here powers of \(\Psi_0\) denote its iterates or inverse iterates.  Put
\(V_m=V(F_m)\), \(V_\infty=V(F_\infty)\), \(v_1=\#V_1\), and
\[
  \partial F_m=\Psi_0^{-m}(V_0).
\]
This one-sided blow-up is the graph-theoretic analogue of zooming into
the fixed point \(O\).

Figures~\ref{fig:sg-blowup} and~\ref{fig:snowflake-levels} show how the
construction produces, respectively, the familiar one-sided
Sierpi\'nski gasket graph and the hexagonal snowflake graph.  In both
examples the distinguished root remains fixed while a new collection of
isometric graph pieces is added at every step.

A walk of length \(n\) is a sequence
\(w=(w(0),\ldots,w(n))\) of vertices such that
\(\{w(j-1),w(j)\}\in E(F_\infty),\ 1\le j\le n\).  We write
\(L(w)=n\), and call \(w\) \emph{self-avoiding} if its vertices are
distinct.  For \(a\in V_\infty\), define
\begin{equation}
  \begin{aligned}
  \cW_{a,n}
    &=\{w:w(0)=a,\ L(w)=n,\ w\text{ is self-avoiding}\},\\
  c_{a,n}&=\#\cW_{a,n},
  \qquad
  c_n=c_{O,n},
  \qquad
  u_n=\sup_{a\in V_\infty}c_{a,n}.
  \end{aligned}
  \label{eq:walk-counts}
\end{equation}
We set \(c_{a,0}=c_0=u_0=1\), and write
\[
  D=\sup_{x\in V_\infty}\deg_{F_\infty}(x).
\]

The two counting sequences \(c_n\) and \(u_n\) have the same exponential growth rate,
even though \(F_\infty\) need not be transitive.

\begin{theorem}
\label{thm:connective}
The limits of the rooted and uniform self-avoiding walk counts exist and
are equal:
\[
  \mu
  :=
  \lim_{n\to\infty}c_n^{1/n}
  =
  \lim_{n\to\infty}u_n^{1/n}
  =
  \inf_{n\ge1}u_n^{1/n}.
\]
Moreover,
\[
  1\le\mu\le D-1<\infty.
\]
Thus \(\mu\) is the connective constant of \(F_\infty\).
\end{theorem}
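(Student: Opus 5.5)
Four facts combine to give the theorem: (i) \(u_n\) is submultiplicative, so Fekete gives \(\lim u_n^{1/n}=\inf u_n^{1/n}\); (ii) \(c_n\le u_n\); (iii) \(c_n^{1/n}\) is bounded below by a quantity comparable to \(u_n^{1/n}\); and (iv) the elementary bounds \(1\le\mu\le D-1\).

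\textbf{Upper side.} Cutting a self-avoiding walk of length \(m+n\) at step \(m\) gives a walk from its start of length \(m\) and a walk from \(w(m)\) of length \(n\). Hence \(c_{a,m+n}\le c_{a,m}u_n\le u_mu_n\), so \(u_{m+n}\le u_mu_n\). Note that \(u_n\le D(D-1)^{n-1}<\infty\), since \(D<\infty\). Indeed, \(F_\infty\) is a union of isometric copies of \(F_0\), and any vertex meets only boundedly many cells of each level; I will use the one-point/finite ramification structure from (NF4) and the bounded number of level-\(m\) cells at a junction point. Fekete's lemma then yields \(\mu:=\lim u_n^{1/n}=\inf u_n^{1/n}\le D-1\). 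It also gives \(\limsup c_n^{1/n}\le\mu\).

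\textbf{Lower bound \(\mu\ge1\).} \(F_\infty\) is infinite and connected with bounded degree, so by König's lemma there is an infinite self-avoiding ray from \(O\). Thus \(c_n\ge1\), and likewise \(u_n\ge1\).

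\textbf{Main step: \(\liminf c_n^{1/n}\ge\mu\).} Since \(F_\infty\) is not transitive, a walk from an arbitrary vertex \(a\) must be transported to one from \(O\). The plan is a finite convolution inequality of the form
\[
  c_{a,n}\le C\sum_{n_1+\cdots+n_k\le n} \prod_{i} c_{n_i}\cdot (\text{poly}(n)),
\]
with \(k\) bounded. Given \(a\) and \(n\), choose the smallest level \(m\) such that a walk of length \(n\) from \(a\) stays within the union of the boundedly many level-\(m\) graph cells around \(a\). Inside a single graph cell, the Lindstr\o m reflections \(\mathscr R_{x,y}\) of (NF3) act transitively on \(V_0\), so each cell boundary point can be mapped isometrically to the corner \(O\) of the corresponding copy of \(F_m\).

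Decompose the walk at its successive visits to the boundary points of the level-\(m'\) cells containing \(a\), for \(m'\) running down the scales. Each piece is then a walk in a cell starting at a boundary vertex, hence, by symmetry, a walk from \(O\) confined to \(F_{m'}\) and counted by \(c_{n_i}\). Finite ramification bounds the number of pieces by a constant times the number of scales, which is \(O(\log n)\). I expect this to be the main obstacle: keeping the number of pieces and the combinatorial overhead subexponential, and handling the fact that walks from \(a\) need not start at a cell corner. The fix I would try first is to shrink cells toward \(a\) until \(a\) is a boundary vertex of a level-0 cell, which costs a bounded number of pieces per scale.

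\textbf{Conclusion.} Granting the inequality, suppose \(c_j\le (\mu-\varepsilon)^j\) along a subsequence. I would combine this with a concatenation lower bound. Gluing two walks from \(O\) in reflected copies \(F_m\) and \(\mathscr R(F_m)\) that share the corner \(O\) gives a walk from a boundary vertex of \(F_{m+1}\), and these \(F_m\) pieces meet only at \(O\). This yields approximate supermultiplicativity, \(c_{p+q+\ell}\gtrsim c_pc_q\) up to polynomial factors. Combined with \(u_n\lesssim \text{poly}(n)\max\prod c_{n_i}\), this forces \(\liminf c_n^{1/n}\ge\lim u_n^{1/n}=\mu\). Together with \(c_n\le u_n\), this gives all the equalities in the theorem.
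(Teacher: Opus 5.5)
Your upper side (submultiplicativity of $u_n$, Fekete, $c_n\le u_n$) and the bound $\mu\ge1$ via König's lemma match the paper. For $D<\infty$ you can simply cite the packing argument based on (NF1) in Proposition~\ref{prop:basic-graph}.

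\textbf{The gap is the step $\liminf c_n^{1/n}\ge\mu$.} Your conclusion relies on an approximate supermultiplicativity $c_{p+q+\ell}\gtrsim c_pc_q$. You do not prove it, and the gluing you describe does not produce it. If you reverse one rooted walk and concatenate it with another at a common corner of two adjacent cells, the result passes through that corner. But it \emph{starts} at the free endpoint of the reversed walk, which is an arbitrary vertex, so it is counted by $u_{p+q}$, not by $c_{p+q}$. In the other direction, a walk from $O$ ends at an uncontrolled vertex, so you cannot append a second rooted walk while keeping self-avoidance under control. The paper stresses that rooted counts on this non-transitive graph have no evident sub- or supermultiplicativity.

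\textbf{Your decomposition also has too many pieces.} It yields $O(\log n)$ pieces, which conflicts with your own ``$k$ bounded''. With that many pieces, the longest one is only guaranteed length $\gtrsim n/\log n$. Bounding the other factors by $u_{\ell_i}\le A_\varepsilon e^{(\gamma+\varepsilon)\ell_i}$ then costs about $\varepsilon n$ in the exponent, which is not $O(\varepsilon)$ times that length. With $O(\log n)$ pieces, a generating-function argument would give only $\limsup c_n^{1/n}=\mu$, not the limit.

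\textbf{What is missing are two ideas from the paper.}

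First, take the unique smallest cell $\Delta(w)$ containing all edges of $w$, and split $w$ into maximal runs lying in the same child. Consecutive runs meet at distinct vertices of the $v_1$-element union of the child boundaries. Hence there are at most $Q=v_1+1$ pieces, uniformly in $n$ and in the level of $\Delta(w)$. Your worry that $a$ need not be a corner is resolved by reversing the first piece. Then every piece starts at a child boundary vertex, and Lemma~\ref{lem:boundary-rooting} roots it at $O$. Each piece has multiplicity at most $D^d$: an isometric image of a polygonal path is determined by its prescribed start together with at most $d$ neighbour choices, one each time the affine span grows. This gives $u_n\le G_n\max\prod_{i\le q}c_{\ell_i}$ with $q\le Q$ and $G_n$ polynomial in $n$.

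Second, pick the longest factor $k\ge n/Q$ and bound the other factors by $u$. Then transfer the bound to the prescribed length $t=n/Q\le k$ using the trivial cut $c_k\le c_t\,u_{k-t}$. This gives
\[
  c_t\ge\frac{u_{Qt}}{G_{Qt}A_\varepsilon^{Q}e^{(\gamma+\varepsilon)(Q-1)t}},
\]
and hence $\liminf_t t^{-1}\log c_t\ge\gamma-(Q-1)\varepsilon$. This cutting inequality, not a supermultiplicativity of $c_n$, is what converts ``some $c_k$ is large'' into ``every $c_t$ is large''.
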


At scale \(m\), finite ramification leaves only finitely many possible
connection patterns at \(\partial F_m\).  For \(\beta\ge0\), let
\(\bZ_m(\beta)\) be the vector of their partition functions, where a
connection using \(\ell\) edges has weight \(e^{-\beta\ell}\).  Define
\[
  \cB
  =
  \left\{
    \beta\ge0:
    \sup_{m\ge0}\|\bZ_m(\beta)\|_\infty<\infty
  \right\},
  \qquad
  \beta_{\mathrm c}=\inf\cB.
\]
The precise boundary states are defined in Section~4.  Their partition
functions characterize the connective constant and, when they remain
bounded at \(\beta=\beta_{\mathrm c}\), control the remaining subexponential
factor in the walk counts.

\begin{theorem}
\phantomsection
\label{thm:critical-identity}
\begin{enumerate}[label=\textup{(\roman*)}]
\item The critical inverse temperature \(\beta_{\mathrm c}\) is finite,
and \(\mu=e^{\beta_{\mathrm c}}\).
\item If \(\beta_{\mathrm c}=\min\cB\), then there are constants
  \(A\ge1\) and \(p\ge0\) such that
  \[
  A^{-1}\mu^n n^{-p}
  \le c_n
  \le A\mu^n n^p,
  \qquad n\ge1.
  \]
\end{enumerate}
\end{theorem}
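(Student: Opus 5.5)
We compare the boundary-state partition functions \(\bZ_m(\beta)\) with the generating functions of walks, which Theorem~\ref{thm:connective} controls. A boundary state of \(F_m\) records which boundary vertices of \(\partial F_m\) a walk configuration inside \(F_m\) visits and how they are joined by path pieces. The rooted state consists of a single self-avoiding path from \(O\) to another boundary vertex. Every component of \(\bZ_m(\beta)\) is then a finite sum, over certain self-avoiding configurations of total length \(\ell\) in \(F_m\), of \(e^{-\beta\ell}\). Since \(\#\partial F_m=b\) and each state consists of at most \(b\) path pieces, each component is bounded above by \(\sum_{\ell}\bigl(\sum_{k}u_k e^{-\beta k}\bigr)^{b}\)-type expressions. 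Writing \(G(\beta)=\sum_{n\ge0}u_n e^{-\beta n}\), every component is thus at most \(G(\beta)^b\), uniformly in \(m\).

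\emph{Part (i).} First, for \(\beta>\log\mu\), Theorem~\ref{thm:connective} gives \(u_n\le (\mu e^{\varepsilon})^n\) eventually, so \(G(\beta)<\infty\). This yields \(\sup_m\|\bZ_m(\beta)\|_\infty<\infty\), hence \(\beta\in\cB\) and \(\beta_{\mathrm c}\le\log\mu\le\log(D-1)<\infty\).

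Conversely, fix \(\beta<\log\mu\). We localize rooted walks by scale. Any \(w\in\cW_{O,n}\) lies in \(F_m\) for \(m\) large, because a walk of length \(n\) from \(O\) stays in \(F_{m(n)}\) with \(\lambda^{m(n)}\gtrsim n\). Next we extend \(w\) inside \(F_m\) to a configuration realizing some boundary state. The extension appends a path from the endpoint of \(w\) to \(\partial F_m\) that avoids \(w\); this is possible by finite ramification and connectivity of cells, at the cost of a bounded number of edges per level. I expect this extension to be the main obstacle. A cleaner route avoids it: use monotonicity of \(\bZ_m\) in the state ordering together with the renormalization recursion \(\bZ_{m+1}=\Phi(\bZ_m)\) for a polynomial map \(\Phi\). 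The crossing component \(Z^{\mathrm{cross}}_m(\beta)\) then dominates \(e^{-\beta n}\) times the number of crossing paths of length \(n\). The convolution inequality from the existence proof shows that crossing counts grow like \(\mu^n\) up to products of rooted counts. Hence \(\sum_m Z^{\mathrm{cross}}_m(\beta)\) cannot stay bounded when \(\beta<\log\mu\): the total mass \(\sum_n c_n e^{-\beta n}=\infty\) is split among finitely many states at each scale, and the scales are nested. Therefore \(\beta\notin\cB\), and \(\beta_{\mathrm c}=\log\mu\).

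\emph{Part (ii).} Assume \(\beta_{\mathrm c}\in\cB\), so \(\|\bZ_m(\beta_{\mathrm c})\|_\infty\le C\) for all \(m\).

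For the upper bound, a walk of length \(n\) from \(O\) has a smallest level \(m\) with \(w\subset F_m\). It crosses a copy of \(F_{m-1}\), so its length is at least the graph distance \(\asymp \lambda_*^{m}\), where \(\lambda_*\) is the length-scaling factor. It is also contained in \(F_m\), which has at most \(v_1M^{m}\) vertices, so \(n\le CM^m\). Hence only \(m\in[c\log n, C\log n]\) contribute. For each such \(m\), the walk is counted, up to a bounded multiplicity from the \(\le b\) boundary completions, in \(\bZ_m(\beta_{\mathrm c})\), giving \(c_n e^{-\beta_{\mathrm c}n}\le C' \cdot\#\{\text{admissible }m\}\cdot\) (polynomial overhead). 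The overhead comes from attaching the endpoint to the boundary through \(O(\log n)\) levels, each costing a bounded factor; this gives \(c_n\le A\mu^n n^p\).

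For the lower bound, fix \(n\) and use the finite convolution estimate of the existence proof, which bounds \(u_n\) by a finite product \(\prod_i c_{n_i}\) with \(\sum n_i\le n+K\) and a bounded number of factors. By Theorem~\ref{thm:connective} we have \(u_n\ge\mu^n\) (since \(\mu=\inf u_n^{1/n}\)). If \(c_n\) were smaller than \(\mu^n n^{-p}\) along a subsequence, the upper bound on the other factors would contradict this. Choosing \(p\) as a multiple of the upper exponent times the number of factors gives \(c_n\ge A^{-1}\mu^n n^{-p}\).
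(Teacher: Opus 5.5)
Your first direction is correct. Orienting each component of a boundary forest from its prescribed boundary vertex gives \(Z_m^\sigma(\beta)\le\max\{1,G(\beta)\}^b\), so \(\beta>\log\mu\) implies \(\beta\in\cB\) and \(\beta_{\mathrm c}\le\log\mu<\infty\). Using \(u_n\) here instead of rooting to \(O\) is a harmless variant of Proposition~\ref{prop:critical-comparison}(ii).

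\textbf{Gap 1: the bound \(\beta_{\mathrm c}\ge\log\mu\) and the upper bound in (ii).} Both require controlling walks from \(O\) by \(\bZ_m\). Such a walk has a free endpoint, typically interior to \(F_m\), so it is not a boundary forest and no coordinate of \(\bZ_m\) counts it. Extending it to \(\partial F_m\) does not help. The walk can enclose its endpoint inside a subcell whose boundary vertices it has already visited. Even when an extension exists, it can cost of order \(n\) extra edges, which is an exponential factor in the weight.

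Your ``cleaner route'' does not close this either. Proposition~\ref{prop:bounded-piece} is an upper bound on walk counts and gives no lower bound on crossings. Moreover, the principle that divergence of \(\sum_n c_n e^{-\beta n}\) forces some state to be unbounded is false. Take the Vicsek graph at \(\beta=\log\sqrt2\): \(P_m(1/\sqrt2)=1\) and \(Q_m(1/\sqrt2)=3/(2\sqrt2)\), and for \(m\ge1\) these are the one-edge crossing functions. So \(\bZ_m\) stays bounded by Remark~\ref{rem:one-edge-states}, yet \(c_{3^k-2}\,2^{-(3^k-2)/2}=2^{3/2}\) for every \(k\ge2\).

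What you need is a comparison whose loss is only polynomial in \(n\). The missing idea is the marked-state recursion of Proposition~\ref{prop:marked-renormalization}:
\begin{enumerate}
\item Allow one distinguished degree-one vertex anywhere, recorded by \(\star\) when it is interior.
\item Restricting a marked forest of \(F_{m+1}\) to the children gives exactly one marked child piece and unmarked boundary forests elsewhere. Hence \(\bA_{m+1}=\mathcal L(\bZ_m)\bA_m\) with polynomial entries, and \(\|\bA_m(\beta)\|_\infty\le C_\beta^{m+1}\) whenever \(\beta\in\cB\).
\item Lemma~\ref{lem:logarithmic-localization} puts \(\cW_{O,n}\) inside \(F_{m(n)}\) with \(m(n)=O(\log n)\), so \(c_ne^{-\beta n}\le C(n+1)^{\kappa_\beta}\).
\end{enumerate}
This gives \(\mu\le e^\beta\) for every \(\beta\in\cB\). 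At \(\beta=\beta_{\mathrm c}\in\cB\) it is exactly the upper bound in (ii). Your phrase ``each level costing a bounded factor'' points this way, but the recursion is the actual content.

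\textbf{Gap 2: the lower bound.} Inequality \eqref{eq:uniform-convolution} only guarantees that \emph{some} product \(\prod_{i\le q}c_{\ell_i}\), with \(q\le Q\) and \(\sum_i\ell_i=n\), is at least \(u_n/G_n\ge\mu^n/(QC^Qn^{Q-1})\). Its longest factor has some length \(k\in[\lceil n/Q\rceil,n]\), and \(c_n\) itself need not appear. So assuming \(c_n\) small along a subsequence yields no contradiction. The paper fixes this in three steps:
\begin{enumerate}
\item Set \(n=Qt\) and obtain \(c_k\ge B^{-1}\mu^kn^{-s}\) for some \(k\ge t\).
\item Cut the long rooted walk: \(c_k\le c_tu_{k-t}\), where \(u_{k-t}\le B\mu^{k-t}(k-t+1)^s\) follows from \eqref{eq:uniform-convolution} together with the upper bound on \(c_j\).
\item Combine to get \(c_t\ge B^{-2}Q^{-2s}\mu^t t^{-2s}\).
\end{enumerate}
Without this cutting step, and the polynomial bound on \(u_j\) it uses, the lower bound does not follow.
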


We also study regular polygonal \(N\)-gaskets.  Starting from a regular
\(N\)-gon, we place a homothetic copy of the polygon at each vertex and
choose the common
contraction ratio at which neighboring copies first meet.  When
\(N\ge3\) and \(4\nmid N\), the resulting attractor is a nested fractal.
Let \(F_\infty^{(N)}\) be its one-sided graph.  Let \(\mu_N\) and
\(\beta_{\mathrm c,N}\) be its connective constant and boundary-state
critical inverse temperature, respectively.
At each scale, the boundary-to-boundary crossings fall into finitely many
types according to the boundary vertices they visit.  We collect their
partition functions in the crossing vector
\(\mathbf C_m^{(N)}(\beta)\) (see the exact definition in Section~5).

These crossing functions identify the crossing critical inverse
temperature directly and give explicit connective constants when
\(N=6\) or \(9\).

\begin{theorem}
\label{thm:ngasket}
Let \(N\ge3\) and \(4\nmid N\).  The crossing vector of the regular
polygonal \(N\)-gasket satisfies a closed finite-dimensional polynomial
recursion.
Define its \emph{crossing critical inverse temperature} by
\[
  \beta_{\mathrm c,N}^{\mathrm{cr}}
  :=
  \inf\left\{
    \beta\ge0:
    \sup_{m\ge0}\|\mathbf C_m^{(N)}(\beta)\|_\infty<\infty
  \right\}.
\]
Then \(\mu_N=e^{\beta_{\mathrm c,N}^{\mathrm{cr}}}\).

For \(N\in\{6,9\}\), put
\(\tau=(\sqrt5-1)/2\) and \(d_N=N/3\).  Then
\begin{equation}
  \mu_N
  =
  \left(
    \frac{\sqrt{1+4\tau^{1/d_N}}-1}{2}
  \right)^{-1/d_N}.
  \label{eq:ngasket-algebraic}
\end{equation}
In particular, \(\mu_6\) and \(\mu_9\) are algebraic numbers.
\end{theorem}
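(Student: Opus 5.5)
The plan has three stages: derive the crossing recursion, identify the crossing critical temperature with \(\log\mu_N\) by comparison with Theorem~\ref{thm:critical-identity}, and then solve the recursion explicitly when \(3\mid N\).

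\textbf{The recursion.} First fix the geometry of the \(N\)-gasket. The graph \(F_0\) is the \(N\)-cycle on \(V_0=\{q_0,\dots,q_{N-1}\}\). The level-one cells are \(N\) copies of this cycle placed at the vertices of the \(N\)-gon, and neighbouring copies meet in exactly one point. When \(4\nmid N\), the one-point intersection condition \eqref{eq:one-point} holds, and the cells form a ring.

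A crossing of \(F_{m+1}\) from \(O\) to an adjacent or opposite boundary vertex is decomposed according to the sequence of level-\(m\) cells it enters. Finite ramification and the one-point intersections force the following:
\begin{enumerate}
\item the walk traverses the ring of cells either clockwise or counterclockwise;
\item inside each cell it performs a boundary-to-boundary crossing of \(F_m\), or a crossing that also passes through extra boundary vertices;
\item each cell is entered at most once.
\end{enumerate}
By reflection symmetry (NF3), the crossing types are indexed by the cyclic distance between the endpoints. The types also record whether the walk additionally uses the "other" junction vertex, since a single cell can be visited in two disjoint passes only through distinct pairs of boundary points. Summing over admissible cell sequences then gives polynomial maps \(\mathbf C_{m+1}=\Phi_N(\mathbf C_m)\), starting from \(\mathbf C_0(\beta)\), which is read off the cycle (arcs of length \(k\) have weight \(e^{-\beta k}\)).

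\textbf{Identification with \(\mu_N\).} Two inequalities are needed.
\begin{itemize}
\item The crossing functions are coordinates of \(\bZ_m\), or bounded by them. Hence \(\cB\subset\{\beta:\sup_m\|\mathbf C_m\|<\infty\}\), which gives \(\beta^{\mathrm{cr}}_{\mathrm c,N}\le\beta_{\mathrm c,N}=\log\mu_N\) by Theorem~\ref{thm:critical-identity}(i).
\item Conversely, every boundary-state partition function (pairings with several components, dangling ends) is a finite sum of products of crossing functions of cells at the same level. A walk ending inside \(F_m\) is an \(O\)-to-boundary crossing of the smallest cell containing it, truncated.
\end{itemize}
Bounding each \(\bZ_m\) entry by a polynomial in \(\mathbf C_0,\dots,\mathbf C_m\) gives the reverse inequality for \(\beta>\beta^{\mathrm{cr}}\) only if the polynomial does not grow with \(m\). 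I expect this to be the main obstacle: the sum over the scale of the end cell must converge.

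For this I would use that for \(\beta>\beta^{\mathrm{cr}}\) the crossings actually decay geometrically. Once \(\mathbf C_m\) is bounded, \(\Phi_N\) pushes it to the trivial fixed point \(0\), where \(\Phi_N\) has a zero linear part; its lowest-order terms are products of at least two crossings. This gives doubly exponential decay, and the series over scales converges. The argument follows the scale-localization already used for Theorem~\ref{thm:critical-identity}(ii).

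\textbf{Explicit values for \(N=6,9\).} Let \(d_N=N/3\). The claimed formula corresponds to the substitution \(x=e^{-\beta d_N}\), because a shortest crossing between "opposite thirds" uses \(d_N\) edges. I would show that for \(3\mid N\) the recursion restricted to the relevant invariant coordinates reduces to a one-dimensional map. Its critical point \(\beta_{\mathrm c}\) is characterized by
\[
x^2+x=\tau^{1/d_N},\qquad x=e^{-\beta_{\mathrm c}d_N}.
\]
This equation comes from the golden-ratio fixed point \(y^2+y=1\), i.e. \(y=\tau\), of the reduced map, pulled back through the initial condition \(y_0=x^{d_N}\)-type relations. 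Solving the quadratic gives
\[
x=\frac{\sqrt{1+4\tau^{1/d_N}}-1}{2},
\]
and therefore \(\mu_N=x^{-1/d_N}\), which is \eqref{eq:ngasket-algebraic}.

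The computational risk here is verifying that the critical manifold of the full multi-type map \(\Phi_N\) passes exactly through this point. I would check this by showing that the stable manifold of the nontrivial fixed point contains the one-parameter initial curve \(\beta\mapsto\mathbf C_0(\beta)\) at that \(\beta\), using monotonicity of \(\Phi_N\) in each coordinate to separate the decaying regime from the blow-up regime. Algebraicity is then immediate.
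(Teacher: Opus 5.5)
Your architecture matches the paper's: a closed recursion, then \(\beta_{\mathrm c,N}^{\mathrm{cr}}=\beta_{\mathrm c,N}=\log\mu_N\) via Theorem~\ref{thm:critical-identity}(i), then a one-dimensional reduction for \(N=6,9\). The inequality \(\beta_{\mathrm c,N}^{\mathrm{cr}}\le\beta_{\mathrm c,N}\) is handled correctly. The two substantive steps, however, are not established.

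\textbf{The reverse comparison.} The obstacle you flag does not exist, and the argument you propose for it contains a false step. The entries of \(\bZ_m\) sum over boundary forests, and every degree-one vertex of such a forest lies on \(\partial F_m\). There are no dangling ends, so no sum over the scale of an end cell arises. Restricted to an \((m-1)\)-subcell, such a forest is a single path joining two of the three vertices (the outer vertex and the two contacts). It is therefore an outer-to-contact or contact-to-contact crossing of \(F_{m-1}\). Hence each \(Z_m^\sigma(\beta)\) is bounded by a fixed polynomial, independent of \(m\), in \(\mathbf C_{m-1}^{(N)}(\beta)\) alone. The paper does this in the notation of Proposition~\ref{prop:ngasket-completeness}: \(P_m^{ij}\le\mathsf O_{m-1}^2(\mathsf P_{m-1}^{r-1}+\mathsf P_{m-1}^{N-r-1})\), followed by Remark~\ref{rem:one-edge-states}. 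This bounds \(\bZ_m\) at every \(\beta\) with bounded crossings, with no decay needed.

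Your justification of decay, ``once \(\mathbf C_m\) is bounded, \(\Phi_N\) pushes it to \(0\)'', is false. For \(N=6,9\) at \(\beta=\beta_{\mathrm c,N}^{\mathrm{cr}}\), the orbit is constant at \(\tau^{1/d_N}\). A vanishing linear part at the origin says nothing about orbits that are not already small. For \(\beta>\beta_{\mathrm c,N}^{\mathrm{cr}}\) decay does hold. To prove it, compare with a bounded orbit at some \(\beta'\in(\beta_{\mathrm c,N}^{\mathrm{cr}},\beta)\), using \(\mathbf C_0(\beta)\le e^{-(\beta-\beta')}\mathbf C_0(\beta')\), coordinatewise monotonicity, and the fact that every monomial has degree at least two.

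\textbf{The cases \(N=6,9\).} The one-dimensional reduction is asserted, not derived, and your criterion \(3\mid N\) is wrong: for \(N=15,21,\dots\) no such reduction exists. What singles out \(6\) and \(9\) is \(s_N=N/3=d_N\). Then \eqref{eq:ngasket-recursion} gives \(X_0=Y_0=e^{-d_N\beta}+e^{-2d_N\beta}\) and
\[
  X_{m+1}=Y_{m+1}=X_m^2\bigl(Y_m^{d_N-1}+Y_m^{2d_N-1}\bigr).
\]
So the whole initial curve lies on the invariant diagonal, where \(t_{m+1}=t_m^{d_N+1}+t_m^{2d_N+1}\). There is no stable manifold of a multi-type map to control. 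For \(t>0\), the quantity \(t^{d_N+1}+t^{2d_N+1}-t\) has the sign of \(t-\tau^{1/d_N}\). Hence the orbit is bounded if and only if \(t_0\le\tau^{1/d_N}\), which is \(z+z^2=\tau^{1/d_N}\) with \(z=e^{-d_N\beta}\). Your final equation is correct, but the proposal does not contain the computation that produces it.
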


The crossing recursions contain more detailed information
than is needed for Theorem~\ref{thm:ngasket}.
Appendix~\ref{app:ngasket-dynamics} gives the complete behavior of these
\(N\)-gasket recursions above, below, and at the critical value, and
Appendix~\ref{app:ngasket-rates} determines their off-critical rates.
These results are stated there as a \emph{complete phase diagram
theorem} (Theorem~\ref{thm:ngasket-dynamics}) and an
\emph{off-critical rate theorem}
(Theorem~\ref{thm:ngasket-rates}).  For the Sierpi\'nski gasket,
this dynamical viewpoint originates in the work of Hattori, Hattori and
Kusuoka~\cite{HHK1990}; among the remaining polygonal recursions, the
\(N=5\) case requires the most detailed argument in our treatment, owing
in part to its four-variable crossing recursion.

\begin{remark}
For every regular polygonal \(N\)-gasket,
Theorem~\ref{thm:ngasket-dynamics} and
Proposition~\ref{prop:ngasket-completeness} show that the boundary-state
partition vectors are bounded at \(\beta_{\mathrm c,N}\).  By
Definition~\ref{def:critical-point}, the infimum defining
\(\beta_{\mathrm c,N}\) is therefore a minimum; hence
Theorem~\ref{thm:critical-identity}(ii) applies to these
graphs.
\end{remark}

We call the graph \emph{\(h\)-flexible} if, at some finite scale, every
realizable boundary connection has two internal forest realizations whose
numbers of edges differ by \(h\).  The formal definition is given in
Section~6.

This condition yields a fixed-step ratio limit.  For regular polygonal
\(N\)-gaskets, its smallest possible step can be determined explicitly,
while the Vicsek graph shows what can happen without flexibility.

\begin{theorem}
\phantomsection
\label{thm:ratio-limit}
\begin{enumerate}[label=\textup{(\roman*)}]
\item If the nested fractal graph is \(h\)-flexible,
then
\begin{equation}
  \lim_{n\to\infty}\frac{c_{n+h}}{c_n}=\mu^h.
  \label{eq:h-step-ratio}
\end{equation}
In particular, \(1\)-flexibility implies
\[
  \lim_{n\to\infty}\frac{c_{n+1}}{c_n}=\mu.
\]
\item For the regular polygonal \(N\)-gasket with \(N\ge3\) and
\(4\nmid N\), write
\[
  N=4k_N+\ell_N,\qquad
  \ell_N\in\{1,2,3\},\qquad
  s_N=k_N+1,
\]
and put
\[
  h_N
  =
  \gcd\bigl(N-2s_N,\,2s_N-(N-2s_N)\bigr).
\]
Then \(h_N\) is the smallest flexibility step.  In fact, the graph is
\(h_N\)-flexible at scale \(2\), and
\[
  h_N=
  \begin{cases}
    2,&N\equiv2\pmod4,\\
    3,&N\equiv9\pmod{12},\\
    1,&\text{otherwise}.
  \end{cases}
\]
Consequently,
\[
  \lim_{n\to\infty}\frac{c_{n+h_N}}{c_n}=\mu_N^{h_N}.
\]
\item The standard Vicsek graph is not \(h\)-flexible for any
\(h\ge1\).  Its connective constant is
\(\mu=\sqrt2\), but, for every \(k\ge2\),
\begin{equation}
  \begin{aligned}
  c_{3^k-2}&=2^{(3^k+1)/2},\\
  c_{3^k-1}&=3\cdot2^{(3^k-3)/2},\\
  c_{3^k}&=3\cdot2^{(3^k-1)/2}.
  \end{aligned}
  \label{eq:vicsek-special-counts}
\end{equation}
Consequently,
\[
  \frac{c_{3^k-1}}{c_{3^k-2}}=\frac34,
  \qquad
  \frac{c_{3^k}}{c_{3^k-1}}=2,
\]
and the successive ratios do not converge.
\end{enumerate}
\end{theorem}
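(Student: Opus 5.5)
The proof splits along the three parts of the statement, and part (i) carries the real content.

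\emph{Part (i): the $h$-flexible ratio limit.}
\begin{enumerate}
\item Fix the scale $m_0$ at which $h$-flexibility holds. Decompose a self-avoiding walk from $O$ of length $n$ into its restrictions to the level-$m_0$ graph cells. Each restriction is an internal forest realizing a boundary connection pattern. Finite ramification guarantees that gluing any other realization of the same pattern back in again yields a self-avoiding walk from $O$.
\item Flexibility lets us replace, in a single cell, a realization by one with exactly $h$ more edges, or $h$ fewer. This gives an injection-type comparison between walks of lengths $n$ and $n\pm h$, with multiplicities bounded by constants depending only on $m_0$ and $D$. From this I would derive a Kesten-type estimate
\[
  c_{n+h}\,c_{n-h}\ \ge\ (1-o(1))\,c_n^2 .
\]
To get it, I would perform replacements in many cells simultaneously, roughly as in Kesten's pattern theorem. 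Since a positive density of cells carry a replaceable pattern, the numbers of walks with $j$ replacements up and $j$ down are distributed like binomial or hypergeometric counts. Log-concavity then follows, in the style of Kesten's argument for $\mathbb Z^d$.
\item Combine log-concavity along the progression $n+h\mathbb Z$ with Theorem~\ref{thm:connective}, that is, $c_n^{1/n}\to\mu$. The ratios $c_{n+h}/c_n$ are then asymptotically monotone along each residue class. A monotone sequence whose geometric mean tends to $\mu^h$ must itself converge to $\mu^h$.
\end{enumerate}
The main obstacle is step 2. We lack a pattern theorem guaranteeing a positive density of flexible cells. I would try to obtain this from the bounded-degree finite-ramification structure via a Kesten-style pattern theorem proved with the convolution estimate of Theorem~\ref{thm:connective}. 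If that fails, I would try to use the partition-vector bounds behind Theorem~\ref{thm:critical-identity} instead.

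\emph{Part (ii): $N$-gaskets.}
\begin{enumerate}
\item Enumerate, at scale $2$, the internal realizations of each crossing type in the explicit crossing recursions of Section~5. The extreme routes around an $N$-gon go the short way, with $s_N$ sides, or the long way, with $N-s_N$ sides. This produces length differences $N-2s_N$ and $2s_N-(N-2s_N)$, so their gcd $h_N$ is realizable.
\item For minimality, every realization length of a given connection lies in a fixed residue class modulo $h_N$. To see this, decompose lengths into polygon-side contributions, each of which is congruent modulo $h_N$.
\item The case table for $h_N$ is a gcd computation over $N \bmod 12$.
\item The ratio limit then follows from part (i).
\end{enumerate}

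\emph{Part (iii): the Vicsek graph.}
\begin{enumerate}
\item The Vicsek graph is a tree, so a boundary connection has a unique realization and flexibility fails for every $h\ge1$.
\item Compute the counts by direct induction on scale. A walk of length near $3^k$ is forced along geodesic arms, with binary choices at branch points. This yields~\eqref{eq:vicsek-special-counts}.
\item Read off $\mu=\sqrt2$ from these counts, and conclude from the two displayed ratio values that the successive ratios do not converge.
\end{enumerate}
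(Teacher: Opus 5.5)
Your plan for (i) has the same architecture as the paper's, a finite-ramification version of Kesten's replacement argument. But it leaves open exactly the two steps that carry the proof.

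\emph{First gap: the pattern theorem.} Flexibility does not let you shift an arbitrary cell restriction by $\pm h$. Only a cell whose restriction \emph{equals} the chosen $\Gamma_\sigma^-$ (resp.\ $\Gamma_\sigma^+$) can be switched, so you need a positive density of these specific patterns. Neither the convolution estimate nor the partition-vector bounds you mention addresses this. The missing idea is that on a finitely ramified graph such a pattern theorem is a pure surgery-and-entropy count.
\begin{itemize}
\item Work only in $r$-cells that meet the walk but contain neither endpoint. In a cell containing an endpoint the restriction need not be a boundary forest, so your step~1 fails there.
\item In every admissible cell, the restriction can be replaced by any fixed representative $\Gamma_\sigma$ of its state, simultaneously in many cells.
\item A long $n$-step walk has at least $n/(2e_r)$ such cells, where $e_r=\#E(F_r)$. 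If fewer than $an$ of them are canonical, canonicalize $k=\lfloor\delta n\rfloor$ of the others.
\item Each output has length within $e_rk$ of $n$ and comes from at most $2^{e_rk}\binom{k+\lfloor an\rfloor}{k}$ inputs. With $a=\delta^2$, the entropy $\delta\log(1/\delta)$ of the choice of cells beats all $O(\delta)$ losses. Hence such walks form an exponentially small fraction of $\cW_{O,n}$.
\item Using $\Gamma_\sigma^-$ and then $\Gamma_\sigma^+$ as representatives gives $m_\pm(w)\ge\alpha n$ off an exponentially small set.
\end{itemize}

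\emph{Second gap: the rate.} Your step~3 does not go through with the error you state. The inequality $c_{n+h}c_{n-h}\ge(1-o(1))c_n^2$ is approximate log-\emph{convexity}, not log-concavity. Together with $c_n^{1/n}\to\mu$ it does not force convergence. The ratios may have rare upward spikes that relax over $o(n)$ steps without affecting the root limit. You need an $O(1/n)$ error and a Tauberian lemma at that rate (Lemma~\ref{lem:ratio-criterion}).

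The paper gets the rate from exact weighted double counting rather than a binomial heuristic:
\begin{itemize}
\item The one-switch identity is $c_{n+h}=\mathcal E^{(0)}_{n+h}+\sum_{w\in\cW_{O,n}}m_-(w)/(m_+(w)+1)$, and there is a two-switch analogue.
\item By Cauchy--Schwarz it suffices to control the per-walk defect $\frac{m_-(m_--1)}{(m_++1)(m_++2)}-\frac{m_-^2}{(m_++1)^2}=-\frac{m_-(m_-+m_++1)}{(m_++1)^2(m_++2)}$. This is $O(1/n)$ once $m_+\ge\alpha n$, which gives $R_nR_{n+h}-R_n^2\ge-C/n$.
\item Separately, $m_-\ge\alpha n$ gives $R_n\ge b$, which lets you divide by $R_n$.
\end{itemize}

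\emph{Part (ii).} Flexibility at scale $2$ must hold for every $\sigma\in\cS_2$, including multi-component forests. Enumerating the crossing types of Section~5 is therefore not enough.

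The claim ``their gcd is realizable'' ignores that both local moves must fit into distinct copies of $F_0$ of a single realization without changing the trace.
\begin{itemize}
\item The move of size $N-2s_N$ needs the other contact of its copy to be free; the paper uses it once, at an endpoint copy.
\item The move of size $4-\ell_N$ needs an intermediate copy.
\item For $\ell_N=1$ and $k_N\not\equiv2\pmod{3}$, you must compare one move of the first kind with $2j$ of the second, and check that $2j\le\min(k_N,N-2s_N-1)$.
\end{itemize}
The paper treats all states at once by localizing to one level-$1$ subcell of $F_2$. There the restriction is a single path with one of six patterns on its outer vertex and two contacts. This is also why scale $2$ is used: at scale $1$ the second move passes through an outer vertex lying on $\partial F_1$.

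For minimality when $N\equiv2\pmod{4}$ with $s_N$ odd, the outer-vertex-to-contact arcs have odd length. So per-copy contributions do not fix the parity, and the argument needs bipartiteness of every $F_r$.

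\emph{Part (iii).} The Vicsek graph here is not a tree: $F_0$ is the $4$-cycle. For $r\ge1$, adjacent corners of an $r$-cell are joined by paths of lengths $2\cdot3^r\pm1$ with the same boundary state, so those states are even $2$-flexible. Non-flexibility must instead come from the diagonal state. There $P_{m+1}=P_m^3$ gives $P_r=2^{3^r}z^{2\cdot3^r}$, so all its realizations have equal length.

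The counts also need an actual recursion, such as $H_{m+1}=H_m+P_m(1+2Q_m+P_m)(H_m-1)$, with coefficient extraction near $3^k$. The binary choices are the two ways around each square, not branch points of a tree.
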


The rest of the paper is organized as follows.  Section~2 develops the
geometry of graph cells and establishes the basic properties of
\(F_\infty\).  Section~3 proves the existence of the connective constant
stated in Theorem~\ref{thm:connective}.  Section~4
introduces the boundary-state recursion and
proves both the critical identity and the two-sided
fixed-length bounds in Theorem~\ref{thm:critical-identity}.
Section~5 studies regular polygonal
\(N\)-gaskets and proves Theorem~\ref{thm:ngasket}.  Section~6 proves
the ratio-limit results, determines the smallest flexibility step of
each regular polygonal \(N\)-gasket, and gives the Vicsek counterexample in
Theorem~\ref{thm:ratio-limit}.
Appendix~\ref{app:ngasket-dynamics} gives the complete phase
diagram of the crossing recursions for regular polygonal \(N\)-gaskets,
and Appendix~\ref{app:ngasket-rates} gives their off-critical rates.

\section{Nested fractal graphs}

The reflections in (NF3) preserve Euclidean distance and \(V_0\), and
therefore act by automorphisms of the nearest-neighbor graph \(F_0\).
In this section, we record the graph-theoretic consequences
of the nested fractal axioms.

\begin{proposition}
\label{prop:basic-graph}
The graph \(F_0\) is connected.  Moreover,
\(F_m\subset F_{m+1}\) for every \(m\), and \(F_\infty\) is an infinite,
connected graph of uniformly bounded degree.
\end{proposition}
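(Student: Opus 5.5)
We take the four assertions of Proposition~\ref{prop:basic-graph} in order: connectivity of \(F_0\), the nesting \(F_m\subset F_{m+1}\), infiniteness and connectivity of \(F_\infty\), and the uniform degree bound.

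\emph{Connectivity of \(F_0\).} We use the reflections \(\mathscr R_{x,y}\) from (NF3). Suppose \(F_0\) were disconnected, and let \(C\) be a component. Choose \(x\in C\) and \(y\notin C\) minimizing \(|x-y|\) among such pairs. Since \(\mathscr R_{x,y}\) is an isometry preserving \(V_0\), it is an automorphism of \(F_0\), and it swaps \(x\) and \(y\). If \(|x-y|=\delta_0\) we have an edge between \(x\) and \(y\), a contradiction. Otherwise take a neighbour \(z\) of \(x\) in \(C\); this exists unless \(C=\{x\}\), and we handle that case separately, since every vertex has some nearest neighbour at distance \(\delta_0\). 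Its image \(\mathscr R_{x,y}(z)\) is a neighbour of \(y\). The Euclidean distances \(|z-y|\) and \(|\mathscr R_{x,y}(z)-x|\) should then produce a cross pair closer than \(|x-y|\), contradicting minimality. The clean version of this argument is the standard one (Lindstr\o m, Barlow~\cite{Barlow1998}): for any \(x\ne y\) in \(V_0\), one runs induction on \(|x-y|\), reflecting in the bisector of \(x\) and an intermediate point. We expect this to be the main obstacle. If the elementary reflection argument does not close, we will fall back on (NF2): \(K\) is connected, hence so is the level-\(r\) cell-adjacency structure, and combining this with (NF4) and the symmetry should yield connectivity of \(F_0\).

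\emph{Nesting.} Since \(\Psi_0(O)=O\), we have
\[
F_{m+1}\supset\bigl(\Psi_0^{-(m+1)}\circ\Psi_{0w}\bigr)(F_0)=\bigl(\Psi_0^{-m}\circ\Psi_w\bigr)(F_0)
\]
for every \(w\in\cI^m\), using the words beginning with \(0\). This gives \(F_m\subset F_{m+1}\) directly.

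\emph{Infiniteness and connectivity of \(F_\infty\).} The graph \(F_\infty\) is infinite because \(\diam\partial F_m=\lambda^m\diam V_0\to\infty\). For connectivity, we show by induction that each \(F_m\) is connected. \(F_{m+1}\) is \(\Psi_0^{-(m+1)}\bigl(\bigcup_i\Psi_i(G_m)\bigr)\), where \(G_m=\bigcup_{|w|=m}\Psi_w(F_0)\). Each piece \(\Psi_i(G_m)\) is connected by the induction hypothesis. Two pieces are glued whenever they share a point, and by (NF4) any shared point is a common boundary point, hence a vertex of both. The first-level cells \(\Psi_i(K)\) form a connected cell-adjacency graph, because \(K\) is connected (NF2) and a finite union of disjoint compact pieces would disconnect it. Hence \(F_{m+1}\) is connected, and \(F_\infty\), as an increasing union of connected graphs, is connected.

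\emph{Uniform degree bound.} A vertex \(x\in V_\infty\) lies in at most a bounded number of level-\(m\) graph cells. Such cells have disjoint interiors by (NF1) and all have the same size. We plan to derive the bound from open-set-condition volume counting: the cells containing \(x\) lie in a ball of radius comparable to the common cell diameter, and their disjoint open copies each have comparable volume. This bound is independent of \(m\). Each cell contributes at most \(\#V_0-1\) to the degree of \(x\), so \(D<\infty\).
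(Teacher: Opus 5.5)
You have not given a proof of the first assertion, connectivity of \(F_0\), and your induction for \(F_\infty\) uses it as its base case. The reflection \(\mathscr R_{x,y}\) in the bisector of the minimizing pair cannot close the argument. Put \(\delta_*=|x-y|\) and let \(z\) be a neighbour of \(x\). Suppose \(\langle z-x,\,y-x\rangle\le0\). Then \(|z-y|^2=\delta_*^2-2\langle z-x,y-x\rangle+\delta_0^2\ge\delta_*^2\), so \(|\mathscr R_{x,y}(z)-x|=|z-y|\ge\delta_*\). Moreover, \(z\) lies on \(x\)'s side of the bisector at distance at least \(\delta_*/2\) from it, so \(|z-\mathscr R_{x,y}(z)|\ge\delta_*\) as well. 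None of your candidate cross pairs is then shorter than \(\delta_*\), and nothing excludes the case that every neighbour of \(x\) points away from \(y\). The fallback through (NF2) is not an argument either: adjacency of first-level cells says nothing directly about the nearest-neighbour metric graph on \(V_0\). Finally, you assert that every vertex has a neighbour at distance \(\delta_0\) but do not prove it, and this also needs a reflection. If \(|u-v|=\delta_0\) and \(x\ne u\), then \(\mathscr R_{x,u}(v)\in V_0\) and \(|\mathscr R_{x,u}(v)-x|=|v-u|=\delta_0\).

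The missing idea, which is what the paper does and what your remark about ``an intermediate point'' gestures toward, is to reflect in the bisector \(H\) of \(z\) and \(y\) instead. Since \(\mathscr R_{z,y}\) is an automorphism of \(F_0\) sending \(z\) to \(y\), it maps the component \(C\ni x,z\) onto the component of \(y\), so \(x'=\mathscr R_{z,y}(x)\notin C\). As \(|x-z|=\delta_0<\delta_*\), the point \(x\) lies strictly on \(z\)'s side of \(H\). Projecting onto the normal of \(H\) gives \(\delta_*\ge d_x+d_y\), where \(d_x=\operatorname{dist}(x,H)\) and \(d_y=\operatorname{dist}(y,H)=|z-y|/2\ge\delta_*/2\) by minimality. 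Hence \(|x-x'|=2d_x\le\delta_*\). Equality would force \(x=z\), so the inequality is strict and contradicts minimality.

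The rest of your proposal is sound:
\begin{enumerate}
\item The nesting via words beginning with \(0\) is the paper's \(S_{m,0}=\mathrm{id}\).
\item Your induction through first-level cells is a valid variant of the paper's level-\(m\) adjacency argument. It uses \(V_0\subset V(G_m)\), which holds because each point of \(V_0\) is fixed by some \(\Psi_i\), and it needs only (NF4).
\item The degree bound is the paper's volume count, with two provisos. First, run it on copies of \(F_0\), since only these contribute at most \(\#V_0-1\) edges at \(x\). Second, rewrite all copies through \(x\) with a common parameter \(r\) before invoking (NF1), because (NF1) separates only words of equal length.
\end{enumerate}
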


\begin{proof}
Choose \(u,v\in V_0\) with \(|u-v|=\delta_0\).  Every
\(x\in V_0\) has a neighbor in \(F_0\): this is \(v\) when \(x=u\), and
otherwise it is \(\mathscr R_{x,u}(v)\), since
\[
  |\mathscr R_{x,u}(v)-x|
    =|\mathscr R_{x,u}(v)-\mathscr R_{x,u}(u)|
    =|v-u|
    =\delta_0.
\]

If \(F_0\) were disconnected, choose \(x,y\) in different components with
\(\delta_*=|x-y|\) minimal, and let \(z\) be a neighbor of \(x\).  Then
\(|x-z|=\delta_0<\delta_*\).  Let \(H\) be the perpendicular bisector of
\(z\) and \(y\), and set
\[
  d_y=\operatorname{dist}(y,H),
  \qquad
  d_x=\operatorname{dist}(x,H).
\]
Since \(|x-z|<|x-y|\), the point \(x\) lies on the same side of \(H\)
as \(z\).  Projection onto the line perpendicular to \(H\) therefore
gives \(\delta_*\ge d_y+d_x\).  Since \(z\) and \(y\) are also in
different components,
\[
  2d_y=|z-y|\ge\delta_*.
\]
It follows that \(2d_x\le\delta_*\).  Equality would force \(x\) and \(z\)
to be the same point on the line perpendicular to \(H\), which is
impossible.  Thus \(2d_x<\delta_*\).  Since \(\mathscr R_{z,y}\) is an automorphism
of \(F_0\) and sends \(z\) to \(y\), it maps the component containing
\(x\) onto the component containing \(y\).  Therefore
\(x'=\mathscr R_{z,y}(x)\) is in the component of \(y\), whereas
\[
  |x-x'|=2d_x<\delta_*.
\]
This contradicts the minimal choice of \(x,y\).  Hence \(F_0\) is
connected.

For \(i\in\cI\), set
\[
  S_{m,i}=\Psi_0^{-(m+1)}\Psi_i\Psi_0^m.
\]
Then \(S_{m,i}\) is an isometry, \(S_{m,0}\) is the identity, and
\eqref{eq:finite-graphs} gives
\[
  F_{m+1}=\bigcup_{i=0}^{M-1} S_{m,i}(F_m).
\]
Since \(S_{m,0}\) is the identity, \(F_m\subset F_{m+1}\).
Since \(K\) is connected by (NF2), the incidence graph of the level-\(m\)
fractal cells is connected; here its vertices are the cells, and two
vertices are adjacent when the corresponding cells intersect.  Otherwise,
the union of the cells would split into two disjoint nonempty compact
sets.  By (NF4) and \eqref{eq:one-point}, two adjacent cells meet at a
common boundary vertex.  Since
each copy of \(F_0\) is connected, their union \(F_m\) is connected.  Hence
\(F_\infty\) is connected.  Moreover,
\[
  \diam(\partial F_m)=\lambda^m\diam(V_0),
\]
so \(F_\infty\) is infinite.

Finally, we prove that the degrees are uniformly bounded.  For \(r\ge0\)
and \(\alpha\in\cI^r\), call the subgraph
\(\Psi_0^{-r}\Psi_\alpha(F_0)\) an \emph{elementary copy}; it is an
isometric copy of \(F_0\) in \(F_\infty\).  Choose a closed ball
\(\overline B(x_{\mathcal U},\rho)\subset\mathcal U\).  To each elementary
copy \(\Psi_0^{-r}\Psi_\alpha(F_0)\), associate the ball
\(\Psi_0^{-r}\Psi_\alpha(B(x_{\mathcal U},\rho))\).  Because the two similarities have
reciprocal ratios, all these associated balls have radius \(\rho\).

Fix \(x\in V_\infty\), and consider any finite family of distinct
elementary copies containing \(x\).  If \(s\ge r\), then
\[
  \Psi_0^{-r}\Psi_\alpha(F_0)
  =
  \Psi_0^{-s}\Psi_0^{s-r}\Psi_\alpha(F_0).
\]
The composition \(\Psi_0^{s-r}\Psi_\alpha\) equals \(\Psi_\beta\) for a
word \(\beta\in\cI^s\), so the right-hand side is an elementary copy.
Choose \(s\) no smaller than the parameters \(r\) of all the copies in
the family.  The above identity then represents every member of the
family with the same parameter \(s\).  Their
associated balls are then pairwise disjoint by (NF1).  If
\[
  R=\max_{y\in K}|x_{\mathcal U}-y|,
\]
then the center of every associated ball is at distance at most
\(R\)
from \(x\).  Thus all these radius-\(\rho\) balls lie in
\(B(x,R+\rho)\).  Comparing \(d\)-dimensional volumes shows that the
family contains at most
\[
  C
  =
  \left(\frac{R+\rho}{\rho}\right)^d<\infty.
\]
This bound is independent of \(x\) and of the chosen family.  Every edge
incident with \(x\) belongs to an elementary copy, and one copy contributes
at most \(\max_{y\in V_0}\deg_{F_0}(y)\) incident edges.  Hence
\[
  \deg_{F_\infty}(x)
    \le C\max_{y\in V_0}\deg_{F_0}(y),
\]
uniformly in \(x\).
\end{proof}

We shall repeatedly use finite copies of \(F_m\) inside \(F_\infty\).
For \(m\ge0\), \(r\ge m\), and
\(\alpha\in\cI^{r-m}\), define
\begin{equation}
  S_{r,\alpha}^{(m)}
  =
  \Psi_0^{-r}\Psi_\alpha\Psi_0^m,
  \qquad
  \Delta_{m;r,\alpha}
  =
  S_{r,\alpha}^{(m)}(F_m),
  \qquad
  \partial\Delta_{m;r,\alpha}
  =
  \Psi_0^{-r}\Psi_\alpha(V_0).
  \label{eq:m-cell}
\end{equation}
The map \(S_{r,\alpha}^{(m)}\) has similarity ratio one and is an
isometry.  Let
\[
  \cT_m
  =
  \{\Delta_{m;r,\alpha}:
    r\ge m,\ \alpha\in\cI^{r-m}\},
\]
where repeated copies of the same subgraph are identified.  An element
of \(\cT_m\) is a \emph{graph \(m\)-cell}.  When no confusion with a
fractal cell can arise, we simply call it an \emph{\(m\)-cell}.  Thus a
graph \(m\)-cell is a finite subgraph of \(F_\infty\) isometric to
\(F_m\), including all vertices and edges of that copy, and its boundary
consists of the \(\#V_0\) vertices specified in \eqref{eq:m-cell}.

\begin{proposition}
\label{prop:cell-hierarchy}
For every \(m\ge0\), \(F_m\) is an \(m\)-cell.  Two distinct \(m\)-cells
are either disjoint or meet at one common boundary vertex.  Consequently,
every edge of \(F_\infty\) lies in exactly one \(m\)-cell.  If \(m\ge1\),
every \(m\)-cell is the union of \(M\) \((m-1)\)-cells, and the union of
the boundaries of these \(M\) subcells contains \(v_1\) vertices.
\end{proposition}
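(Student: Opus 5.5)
The plan is to transport every statement back to fractal cells of \(K\) by a single similarity. The key observation is that any finite collection of \(m\)-cells can be written with a common parameter \(r\). Indeed, if \(s\ge r\), then
\[
\Psi_0^{-r}\Psi_\alpha\Psi_0^m=\Psi_0^{-s}\Psi_{0^{s-r}\alpha}\Psi_0^m,
\]
exactly as in the proof of Proposition~\ref{prop:basic-graph}. Once \(r\) is common, apply \(\Psi_0^{r}\). This sends \(\Delta_{m;r,\alpha}\) to \(\Psi_\alpha\Psi_0^m(F_m)\), and by \eqref{eq:finite-graphs} this equals
\[
\bigcup_{w\in\cI^m}\Psi_{\alpha w}(F_0).
\]
Its vertex set lies in the level-\((r-m)\) fractal cell \(K_\alpha\), and its boundary \(\Psi_\alpha(V_0)=\partial K_\alpha\). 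The first claim is then immediate: \(F_m=\Delta_{m;m,\emptyset}\), since \(\Psi_0^{-m}\Psi_0^m=\mathrm{id}\).

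For the intersection claim, take two distinct \(m\)-cells written with a common \(r\) and words \(\alpha\ne\alpha'\in\cI^{r-m}\). Note that distinct subgraphs force distinct words, since identical words give identical subgraphs. After applying \(\Psi_0^r\), their vertex sets lie in \(K_\alpha\) and \(K_{\alpha'}\). By the iterated form of (NF4) noted after Definition~\ref{def:nested_fractal}, we have
\[
K_\alpha\cap K_{\alpha'}=\partial K_\alpha\cap\partial K_{\alpha'},
\]
and by \eqref{eq:one-point} this set has at most one point. Any common vertex is therefore a common boundary vertex, and there is at most one of them. It follows that the two cells share no edge, because an edge has two endpoints.

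For the edge claim, existence comes from the construction: every edge lies in an elementary copy \(\Psi_0^{-r}\Psi_\beta(F_0)\). Raising \(r\) if needed so that \(r\ge m\), write \(\beta=\alpha w\) with \(|w|=m\). That copy is contained in \(\Delta_{m;r,\alpha}\). Uniqueness follows from the previous paragraph, since two distinct \(m\)-cells share at most a single vertex.

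For the decomposition with \(m\ge1\), use \(\Psi_0^m=\Psi_0^{m-1}\Psi_0\) together with \(F_m=\bigcup_i S_{m-1,i}(F_{m-1})\). This gives
\[
\Delta_{m;r,\alpha}=\bigcup_{i\in\cI}\Psi_0^{-r}\Psi_{\alpha i}\Psi_0^{m-1}(F_{m-1})=\bigcup_i\Delta_{m-1;r,\alpha i}.
\]
The \(M\) pieces are pairwise distinct, because their boundaries \(\Psi_0^{-r}\Psi_{\alpha i}(V_0)\) are the images of distinct sets \(\Psi_i(V_0)\). These sets are distinct since by (NF1) the \(\Psi_i(\mathcal U)\) are disjoint, and if \(\Psi_i(V_0)=\Psi_j(V_0)\) then \(\Psi_i(K)\) and \(\Psi_j(K)\) would coincide as attractors of conjugate systems. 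I expect this distinctness to be the delicate point. If the attractor argument is awkward, I would instead argue with volumes of \(\Psi_i(\mathcal U)\), or simply use that \(\Psi_i(K)\cap\Psi_j(K)\) has at most one point while \(\Psi_i(K)\) is infinite because \(K\) is connected with \(b\ge2\).

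The union of the subcell boundaries is the isometric image, under \(\Psi_0^{-r}\Psi_\alpha\), of
\[
\bigcup_i\Psi_i(V_0)=\Psi_0(V_1),
\]
using \(V_1=\Psi_0^{-1}\bigcup_i\Psi_i(V_0)\). This set has \(\#V_1=v_1\) elements. The remaining bookkeeping is checking that the identification of repeated copies in \(\cT_m\) is consistent with these representations, which follows from the common-\(r\) reduction.
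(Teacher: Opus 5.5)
Your proof is correct and follows the paper's own argument. The intersection claim uses iterated (NF4) together with \eqref{eq:one-point}, existence of the containing \(m\)-cell comes from splitting the word of an elementary copy, and the decomposition comes from extending \(\alpha\) by one letter; your explicit common-\(r\) normalization and the device of raising \(r\) only make precise what the paper leaves implicit or handles as the separate case \(r<m\).

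There are two small corrections. First, the distinctness of the \(M\) children is immediate: \(\Psi_i(V_0)=\Psi_j(V_0)\) with \(i\ne j\) would give \(\#(\Psi_i(V_0)\cap\Psi_j(V_0))=b\ge2\), contradicting \eqref{eq:one-point}, so the ``conjugate attractors'' idea is unneeded and is not justified as stated. Second, \(\Psi_0^{-r}\Psi_\alpha\) is a similarity of ratio \(\lambda^m\), not an isometry; only its injectivity is needed for the count \(v_1\).
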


\begin{proof}
The representation in \eqref{eq:m-cell} shows that every \(m\)-cell is a
copy of \(F_m\).  If two distinct \(m\)-cells meet, the iterated form of
(NF4) says that their intersection is contained in the intersection of
their boundary sets.  By \eqref{eq:one-point}, this intersection has at
most one vertex.  Thus two distinct \(m\)-cells cannot share an edge.
Conversely, every edge of \(F_\infty\) belongs to an elementary copy
\(\Psi_0^{-r}\Psi_\alpha(F_0)\) for some \(r\ge0\).  If \(r\ge m\),
write \(\alpha=\alpha'\alpha''\), where \(\alpha'\) has length \(r-m\).
Since \(\Psi_{\alpha''}(F_0)\subset\Psi_0^m(F_m)\), we have
\[
  \Psi_0^{-r}\Psi_\alpha(F_0)
  \subset
  \Psi_0^{-r}\Psi_{\alpha'}\Psi_0^m(F_m)
  =
  \Delta_{m;r,\alpha'}.
\]
Thus the edge is contained in the \(m\)-cell
\(\Delta_{m;r,\alpha'}\).  If \(r<m\), the elementary copy is one of
the copies in the union defining \(F_r\), hence is contained in
\(F_r\subset F_m=\Delta_{m;m,\emptyset}\).  Thus the edge again lies in
an \(m\)-cell.
Hence every edge belongs to exactly one
\(m\)-cell.

If \(\Delta=\Delta_{m;r,\alpha}\) and \(m\ge1\), the decomposition in
\eqref{eq:m-cell} gives
\[
  \Delta
    =\bigcup_{i=0}^{M-1}\Delta_{m-1;r,\alpha i}.
\]
The graphs on the right are the \(M\) \((m-1)\)-subcells of \(\Delta\).
Their boundary union is the image of
\(\bigcup_i\Psi_i(V_0)\) under the injective similarity
\(\Psi_0^{-r}\Psi_\alpha\), and hence its cardinality is
\[
  \#\left(\Psi_0^{-1}\left(\bigcup_i\Psi_i(V_0)\right)\right)
    =\#V_1
    =v_1.
\]
\end{proof}

Proposition~\ref{prop:basic-graph} shows that
\(2\le D<\infty\).  Since \(F_\infty\) is infinite, connected, and
locally finite, K\"onig's infinity lemma gives an infinite
self-avoiding ray starting at \(O\).  Consequently,
\begin{equation}
  1\le c_n\le u_n\le D(D-1)^{n-1},
  \qquad n\ge1.
  \label{eq:elementary-walk-bound}
\end{equation}

\section{Existence of the connective constant}

We prove Theorem~\ref{thm:connective} in two steps.  First, the reflection
axiom gives a local operation which moves any boundary point of a finite
cell to the distinguished root.  We then combine the resulting uniform
comparison with submultiplicativity.

\subsection{\texorpdfstring{A convolution
inequality}{A convolution inequality}}

\begin{lemma}
\label{lem:boundary-rooting}
For every \(r\ge0\) and every \(x\in\partial F_r\), there is a Euclidean
isometry \(A_{r,x}\) whose restriction to \(F_r\) is a graph automorphism
and which satisfies \(A_{r,x}(x)=O\).
\end{lemma}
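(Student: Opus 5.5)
Given \(x\in\partial F_r\), the plan is to write \(x=\Psi_0^{-r}(q)\) with \(q\in V_0\), and to take \(A_{r,x}\) to be the conjugate of a reflection. If \(q=O\), then \(x=\Psi_0^{-r}(O)=O\), because \(\Psi_0(O)=O\), so \(A_{r,x}=\mathrm{id}\) works. Otherwise \(q\ne O\), and we set
\[
  A_{r,x}=\Psi_0^{-r}\,\mathscr R_{q,O}\,\Psi_0^{r}.
\]
This is a composition of similarities with ratios \(\lambda^{r}\), \(1\), \(\lambda^{-r}\), so it is a Euclidean isometry. Since \(\mathscr R_{q,O}\) swaps \(q\) and \(O\),
\[
  A_{r,x}(x)=\Psi_0^{-r}\mathscr R_{q,O}(q)=\Psi_0^{-r}(O)=O.
\]

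It then remains to check that \(A_{r,x}\) maps \(F_r\) onto itself as a graph; adjacency is automatically preserved because the edges of \(F_r\) are images of edges of \(F_0\) under maps which act as isometries on the relevant scale. By \eqref{eq:finite-graphs}, \(\Psi_0^r(F_r)=\bigcup_{w\in\cI^r}\Psi_w(F_0)\), so it suffices to show that \(\mathscr R:=\mathscr R_{q,O}\) maps this union onto itself. By (NF3), \(\mathscr R\) permutes the level-\(r\) fractal cells together with their boundaries. Hence for each \(w\) there is \(w'\) with \(\mathscr R(K_w)=K_{w'}\) and \(\mathscr R(\Psi_w(V_0))=\Psi_{w'}(V_0)\).

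The main point is to upgrade this equality of vertex sets to an equality of the graphs \(\mathscr R\Psi_w(F_0)=\Psi_{w'}(F_0)\). The map \(\mathscr R\Psi_w\) and the map \(\Psi_{w'}\) are both similarities with ratio \(\lambda^{-r}\) carrying \(V_0\) onto the same set. Edges of \(\Psi_{w'}(F_0)\) are exactly the pairs in \(\Psi_{w'}(V_0)\) at distance \(\lambda^{-r}\delta_0\). Since \(\mathscr R\Psi_w\) scales all distances by the same factor \(\lambda^{-r}\), it sends the edge set of \(F_0\) onto the pairs of \(\mathscr R\Psi_w(V_0)=\Psi_{w'}(V_0)\) at distance \(\lambda^{-r}\delta_0\), which is the same edge set. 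So the graphs coincide, and \(\mathscr R\) permutes the copies \(\Psi_w(F_0)\).

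Because \(\mathscr R\) is a bijective isometry of \(\R^d\) and permutes the pieces of \(\Psi_0^r(F_r)\), conjugating by \(\Psi_0^{r}\) gives \(A_{r,x}(F_r)=F_r\) as graphs. Hence the restriction of \(A_{r,x}\) to \(F_r\) is a graph automorphism, which completes the plan. I expect no real obstacle. The only care needed is reading (NF3)'s ``together with their boundaries'' correctly, and handling \(r=0\), where the claim is just that \(\mathscr R_{q,O}\) preserves \(V_0\) and distances, as noted at the start of Section 2.
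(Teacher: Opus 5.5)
Your proposal is correct and follows the paper's proof: you use the same conjugated reflection \(\Psi_0^{-r}\mathscr R_{q,O}\Psi_0^{r}\), the identity when \(q=O\), and (NF3) to show that \(F_r\) is preserved. Your explicit verification that \(\mathscr R\Psi_w(F_0)=\Psi_{w'}(F_0)\), using the characterization of edges as pairs at distance \(\lambda^{-r}\delta_0\), spells out a step the paper states in one sentence.
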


\begin{proof}
Write \(x=\Psi_0^{-r}(q)\) with \(q\in V_0\).  If \(q=O\), take the
identity.  Otherwise define
\begin{equation}
  A_{r,x}
    =\Psi_0^{-r}\mathscr R_{q,O}\Psi_0^r.
  \label{eq:rooting-map}
\end{equation}
When \(r=0\), the reflection \(\mathscr R_{q,O}\) preserves \(V_0\) by (NF3) and is
therefore an automorphism of the nearest-neighbor graph \(F_0\).  When
\(r\ge1\), (NF3) in Definition~\ref{def:nested_fractal} says that
\(\mathscr R_{q,O}\) permutes the level-\(r\) fractal cells and their
boundaries.  Since it preserves Euclidean distances, it also transports every copy of the
nearest-neighbor edge set \(E_0\) to the corresponding edge set.  Hence
the map in \eqref{eq:rooting-map} preserves \(F_r\) in either case.
Finally,
\[
  A_{r,x}(x)
    =\Psi_0^{-r}\mathscr R_{q,O}(q)
    =\Psi_0^{-r}(O)
    =O,
\]
because \(\mathscr R_{q,O}\) interchanges \(q\) and \(O\), and \(O\) is fixed by
\(\Psi_0\).
\end{proof}

\begin{lemma}
\label{lem:image-count}
Let
\[
  \eta=(x_0,\ldots,x_\ell),\qquad \ell\ge1,
\]
be a finite polygonal path in \(\R^d\), with consecutive vertices
distinct.  For \(y\in V_\infty\), let \(\mathscr E_y(\eta)\) be the set
of distinct
vertex sequences
\[
  (T(x_0),\ldots,T(x_\ell))
\]
such that \(T\) is a Euclidean isometry of \(\R^d\),
\(T(x_0)=y\), and
\[
  \{T(x_{j-1}),T(x_j)\}\in E(F_\infty),
  \qquad 1\le j\le\ell.
\]
Then
\[
  \#\mathscr E_y(\eta)\le D^d.
\]
The same estimate holds if \(T(x_\ell)=y\) is prescribed in place of
\(T(x_0)=y\).
\end{lemma}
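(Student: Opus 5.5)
We bound the number of distinct image sequences by counting the ways to choose \(T(x_1),T(x_2),\ldots\) step by step, using the fact that an isometry of \(\R^d\) is determined by its values on an affinely spanning set. Choose indices greedily: set \(j_0=0\). Given \(j_0<\cdots<j_{k}\), let \(j_{k+1}\) be the smallest index \(j>j_k\) such that \(x_j\) is not in the affine hull of \(\{x_{j_0},\ldots,x_{j_k}\}\). The affine dimension rises at every stage and cannot exceed \(d\), so the process stops after at most \(d\) new indices \(j_1<\cdots<j_k\), \(k\le d\). Every \(x_j\) then lies in the affine hull \(H\) of \(x_{j_0},\ldots,x_{j_k}\).

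The key observation is that the restriction of an isometry \(T\) to \(H\) is determined by the images \(T(x_{j_0}),\ldots,T(x_{j_k})\), because an affine isometry on an affine subspace is determined by its values on an affine basis. Hence the whole sequence \((T(x_0),\ldots,T(x_\ell))\) is determined by the \(k\) vertices \(T(x_{j_1}),\ldots,T(x_{j_k})\), with \(T(x_0)=y\) fixed.

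It remains to count the admissible choices of each \(T(x_{j_i})\). By the minimality of \(j_i\), the point \(x_{j_i-1}\) lies in the affine hull of \(x_{j_0},\ldots,x_{j_{i-1}}\). Therefore \(T(x_{j_i-1})\) is already determined by the earlier choices. Since \(\{T(x_{j_i-1}),T(x_{j_i})\}\) must be an edge of \(F_\infty\), there are at most \(D\) possibilities for \(T(x_{j_i})\). Multiplying over \(i=1,\ldots,k\) gives at most \(D^k\le D^d\) sequences, using \(D\ge2\) (Proposition~\ref{prop:basic-graph}).

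For the reversed statement with \(T(x_\ell)=y\) prescribed, apply the same argument to the reversed path \((x_\ell,\ldots,x_0)\).

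The main point needing care is that a sequence is determined by the affine-basis images even though \(T\) itself is not unique off \(H\). This is handled by working only with the affine extension on \(H\): two isometries that agree on an affine basis of \(H\) agree on all of \(H\).
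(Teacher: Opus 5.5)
Your proposal is correct and is essentially the paper's proof. The paper also builds the image sequence from left to right: $T(x_{j+1})$ is forced by affinity when $x_{j+1}$ lies in the affine hull of $x_0,\ldots,x_j$, and otherwise there are at most $D$ neighbour choices, with such dimension-raising steps occurring at most $d$ times. Your greedy choice of $j_1<\cdots<j_k$ simply identifies those steps in advance, and you handle the terminal-endpoint version by the same reversal.
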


\begin{proof}
Set
\[
  r(\eta)=\dim\operatorname{Aff}(x_0,\ldots,x_\ell),
\]
where \(\operatorname{Aff}(x_0,\ldots,x_\ell)\) is the smallest affine
subspace of \(\R^d\) containing the vertices of \(\eta\).  For
\(0\le j\le\ell\), put
\[
  \mathsf A_j=\operatorname{Aff}(x_0,\ldots,x_j).
\]
We construct a possible image sequence from left to right.  Suppose that
the images
\[
  T(x_0),\ldots,T(x_j)
\]
have already been fixed.  If \(x_{j+1}\in\mathsf A_j\), there are coefficients
\(\lambda_0,\ldots,\lambda_j\) such that
\[
  x_{j+1}=\sum_{i=0}^j\lambda_i x_i,
  \qquad
  \sum_{i=0}^j\lambda_i=1.
\]
Every Euclidean isometry is affine.  Hence every isometry compatible with
these previously fixed images must satisfy
\[
  T(x_{j+1})
    =\sum_{i=0}^j\lambda_iT(x_i).
\]
Thus the next image \(T(x_{j+1})\) is uniquely determined.

If \(x_{j+1}\notin\mathsf A_j\), then \(T(x_{j+1})\) must be a neighbor of
the already fixed vertex \(T(x_j)\).  There are at most \(D\) choices.
At exactly such a step,
\[
  \dim\mathsf A_{j+1}=\dim\mathsf A_j+1.
\]
Since \(\dim\mathsf A_0=0\) and
\(\dim\mathsf A_\ell=r(\eta)\), the dimension increases
exactly \(r(\eta)\) times.  A factor \(D\) occurs only at those times, and
this proves the desired estimate.  If the image of the terminal
vertex is prescribed, apply the result just proved to the reversed path
\((x_\ell,\ldots,x_0)\).
\end{proof}

Set
\[
  Q=v_1+1,
  \qquad
  C=D^d.
\]
Both constants depend only on the fixed nested fractal graph.
The next proposition is the main combinatorial estimate in the proof of
the existence of the connective constant.

\begin{proposition}
\label{prop:bounded-piece}
For every \(a\in V_\infty\) and \(n\ge1\),
\begin{equation}
  c_{a,n}
  \le
  \sum_{q=1}^{\min\{Q,n\}}C^q
  \sum_{\substack{\ell_1+\cdots+\ell_q=n\\ \ell_i\ge1}}
  \prod_{i=1}^q c_{\ell_i}.
  \label{eq:bounded-piece}
\end{equation}
\end{proposition}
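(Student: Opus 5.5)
The plan is to cut an arbitrary self-avoiding walk started at \(a\) into at most \(Q\) pieces. Each piece should be an isometric image of a walk counted by some \(c_{\ell_i}\). The walk can then be reconstructed from these rooted walks with at most \(C\) choices per piece, using Lemma~\ref{lem:image-count}.

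\emph{Step 1 (localization).} Fix \(w\in\cW_{a,n}\). The walk \(w\) is finite, and \(F_\infty=\bigcup_R F_R\) with each \(F_R\) an \(R\)-cell by Proposition~\ref{prop:cell-hierarchy}. Hence \(w\) lies in some graph cell. Let \(m\) be the smallest level such that \(w\) is contained in a single \(m\)-cell \(\Delta\).

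If \(m=0\), then \(\Delta\) is an isometric copy of \(F_0\) and every vertex of \(\Delta\) is a boundary vertex. Take the single piece \(q=1\), \(\ell_1=n\).

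If \(m\ge1\), write \(\Delta\) as the union of its \(M\) \((m-1)\)-subcells, and let \(J\) be the union of their boundaries. By Proposition~\ref{prop:cell-hierarchy}, \(\#J=v_1\). Cut \(w\) at every time it visits a vertex of \(J\), and discard empty segments, for instance when \(a\in J\). Since \(w\) is self-avoiding, it visits at most \(v_1\) vertices of \(J\), so there are at most \(v_1+1=Q\) segments. Each segment has length at least \(1\), so also \(q\le n\).

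\emph{Step 2 (each piece is rooted).} Distinct \((m-1)\)-subcells meet only at common boundary vertices, which lie in \(J\). Therefore each segment lies in a single \((m-1)\)-subcell \(\Delta'\): it cannot change subcell without passing through a vertex of \(J\), where it would have been cut. Each segment also starts at a vertex of \(\partial\Delta'\), except possibly the first segment, which instead ends at one. By minimality of \(m\), \(w\) does not lie in one subcell, so \(w\) meets \(J\) and this description applies to every segment.

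Reverse the first segment if necessary. Then apply the isometry \((S^{(m-1)}_{r,\alpha})^{-1}\) carrying \(\Delta'\) onto \(F_{m-1}\), followed by the automorphism \(A_{m-1,x}\) of Lemma~\ref{lem:boundary-rooting}. This turns the segment into a self-avoiding walk \(\eta_i\) of length \(\ell_i\) in \(F_{m-1}\subset F_\infty\) starting at \(O\). Thus \(\eta_i\in\cW_{O,\ell_i}\), and \(\ell_1+\cdots+\ell_q=n\).

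\emph{Step 3 (counting).} Consider the map \(w\mapsto(q;\ell_1,\dots,\ell_q;\eta_1,\dots,\eta_q)\). To bound its fibres, reconstruct \(w\) piece by piece.

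The first piece is an isometric image of \(\eta_1\), or of its reversal. The image \(a\) of one of its endpoints is prescribed, and its consecutive vertices must be edges of \(F_\infty\). By Lemma~\ref{lem:image-count}, which covers both the initial-vertex and terminal-vertex versions, there are at most \(C=D^d\) possibilities.

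Once pieces \(1,\dots,i-1\) are fixed, the starting vertex of piece \(i\) is known, and again there are at most \(C\) possible images of \(\eta_i\). Hence each fibre has at most \(C^q\) elements. Summing over \(q\le\min\{Q,n\}\), over compositions \((\ell_i)\) of \(n\) into positive parts, and over \(\eta_i\in\cW_{O,\ell_i}\) gives \eqref{eq:bounded-piece}.

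The main obstacle is Step 2, specifically the first segment. It starts at an arbitrary, generally non-boundary vertex \(a\), so it can only be rooted from its \emph{terminal} end. This is exactly why Lemma~\ref{lem:image-count} includes the terminal-prescribed variant and why the reconstruction begins from \(a\) rather than from \(O\). The second point needing care is the use of minimality of \(m\): it guarantees that \(w\) meets \(J\), so that every piece touches a subcell boundary where Lemma~\ref{lem:boundary-rooting} applies. The case \(m=0\) is handled separately as above.
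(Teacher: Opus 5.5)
Your proof is correct and is essentially the paper's argument: you use the minimal cell, at most \(Q=v_1+1\) pieces separated by distinct junction vertices, reversal of the first piece, rooting via Lemma~\ref{lem:boundary-rooting}, and at most \(C\) reconstructions per piece from Lemma~\ref{lem:image-count}. The only deviation is that you cut at every interior visit to \(J\), rather than only where consecutive edges change child, and this gives the same bound \(q\le Q\); however, your claim that each later segment starts on \(\partial\Delta'\) needs one more line: a vertex of \(J\) lying in \(\Delta'\) is a boundary vertex of \(\Delta'\), because distinct children meet only at common boundary vertices (Proposition~\ref{prop:cell-hierarchy}).
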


The terms in \eqref{eq:bounded-piece} have a direct interpretation.  A
walk is divided into \(q\) pieces of positive lengths
\(\ell_1,\ldots,\ell_q\).  After a suitable isometry, each piece becomes
a self-avoiding walk from \(O\), which accounts for the product
\(\prod_i c_{\ell_i}\).  The number of pieces is at most \(Q\), and at
most \(C^q\) original walks can give the same
\(q\)-tuple of walks from \(O\).

\begin{figure}[ht]
\centering
\vspace{20pt}
\begin{overpic}[width=.52\textwidth]{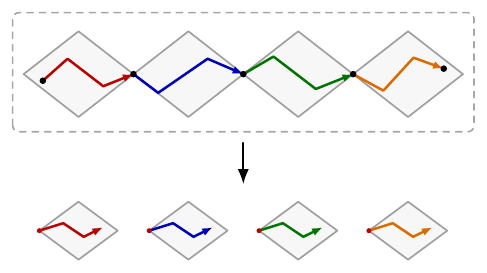}
  \put(2,60){\makebox(0,0)[l]{\small\color{gray!75!black}minimal cell \(\Delta(w)\)}}
  \put(7.5,37.2){\makebox(0,0)[r]{\small \(a\)}}
  \put(27.2,43.5){\makebox(0,0)[b]{\small \(z_1\)}}
  \put(50,43.5){\makebox(0,0)[b]{\small \(z_2\)}}
  \put(72.6,43.5){\makebox(0,0)[b]{\small \(z_3\)}}
  \put(16,31.5){\makebox(0,0){\small\color{red!72!black}\(\gamma_1\)}}
  \put(39,31.5){\makebox(0,0){\small\color{blue!72!black}\(\gamma_2\)}}
  \put(61,31.5){\makebox(0,0){\small\color{green!45!black}\(\gamma_3\)}}
  \put(84,31.5){\makebox(0,0){\small\color{orange!85!black}\(\gamma_4\)}}
  \put(52,21.5){\makebox(0,0)[l]{\small cell isometries and boundary reflections}}
  \put(7.5,5.5){\makebox(0,0)[r]{\small\color{red!65!black}\(O\)}}
  \put(30.2,5.5){\makebox(0,0)[r]{\small\color{red!65!black}\(O\)}}
  \put(52.7,5.5){\makebox(0,0)[r]{\small\color{red!65!black}\(O\)}}
  \put(75.4,5.5){\makebox(0,0)[r]{\small\color{red!65!black}\(O\)}}
  \put(16,-5){\makebox(0,0){\small\(\widehat\gamma_1\)}}
  \put(39,-5){\makebox(0,0){\small\(\widehat\gamma_2\)}}
  \put(61,-5){\makebox(0,0){\small\(\widehat\gamma_3\)}}
  \put(84,-5){\makebox(0,0){\small\(\widehat\gamma_4\)}}
\end{overpic}
\vspace{19pt}
\caption{Decomposition inside the smallest cell containing the walk.  The
upper walk is split whenever two consecutive edges lie in different
children.  The first piece is reversed, and Lemma~\ref{lem:boundary-rooting}
then moves the initial point of every piece to \(O\).}
\label{fig:decomposition}
\end{figure}
\FloatBarrier

Figure~\ref{fig:decomposition} illustrates the decomposition and the
normalization of its pieces.

\begin{proof}
Fix \(a\in V_\infty\) and \(w\in\cW_{a,n}\).  Since \(w\) has only
finitely many edges, it is contained in some \(F_s\).  By
Proposition~\ref{prop:cell-hierarchy}, \(F_s\) is an \(s\)-cell.  Hence
\[
  m(w)
    =\min\{m\ge0:\text{some }\Delta\in\cT_m
       \text{ contains every edge of }w\}
\]
is well defined.  There is only one \(m(w)\)-cell containing all the
edges of \(w\).  Indeed, two such cells would both contain the first edge
of \(w\), which is impossible by Proposition~\ref{prop:cell-hierarchy}.
Denote this unique cell by \(\Delta(w)\).

Suppose first that \(m(w)=0\).  Every vertex of the \(0\)-cell
\(\Delta(w)\) belongs to its boundary.  Choose a representation of
\(\Delta(w)\) in \eqref{eq:m-cell} and use the inverse of its isometry to
move \(w\) to \(F_0\).  Then apply
Lemma~\ref{lem:boundary-rooting} to move its initial point to \(O\).
This produces one walk in \(\cW_{O,n}\).

Now suppose that \(m=m(w)\ge1\).  By
Proposition~\ref{prop:cell-hierarchy}, the cell \(\Delta(w)\) has \(M\)
children of level \(m-1\), and every edge of \(w\) lies in exactly one of
them.  Divide \(w\) into maximal consecutive pieces for which all edges
lie in the same child.  Write
\[
  w=\gamma_1\circ\cdots\circ\gamma_q,
  \qquad \gamma_i\subset\Lambda_i,
  \qquad \Lambda_i\ne\Lambda_{i+1},
  \qquad
  L(\gamma_i)=\ell_i\ge1,
  \qquad
  \sum_{i=1}^q\ell_i=n.
\]
Here \(\Lambda_i\) is the child containing the edges of \(\gamma_i\).
The minimality of \(m\) implies \(q\ge2\), because otherwise all edges of
\(w\) would lie in one \((m-1)\)-cell.

For \(1\le i<q\), let \(z_i\) be the common endpoint of
\(\gamma_i\) and \(\gamma_{i+1}\).  The point \(z_i\) lies in both
\(\Lambda_i\) and \(\Lambda_{i+1}\).  These children are distinct, so
Proposition~\ref{prop:cell-hierarchy} shows that \(z_i\) is a boundary
vertex of both.  Moreover, the vertices \(z_1,\ldots,z_{q-1}\) are
distinct because \(w\) is self-avoiding.  The union of the boundaries of
all children of \(\Delta(w)\) has \(v_1\) vertices, again by
Proposition~\ref{prop:cell-hierarchy}.  It follows that
\[
  q-1\le v_1,
  \qquad
  q\le Q.
\]
This remains true if \(w\) leaves a child and later returns to it: each
change from one child to another occurs at a different vertex of \(w\).

The first piece need not start at a boundary vertex, but it ends at
\(z_1\in\partial\Lambda_1\).  We therefore reverse \(\gamma_1\).  For
\(i\ge2\), the original orientation is retained, and \(\gamma_i\) starts
at \(z_{i-1}\in\partial\Lambda_i\).  Thus every oriented piece now starts
at a boundary vertex of its child.  For each child, choose a representation
in \eqref{eq:m-cell}, move the corresponding piece to \(F_{m-1}\) by the
inverse of its isometry, and then use Lemma~\ref{lem:boundary-rooting}.
This gives a walk
\[
  \widehat\gamma_i\in\cW_{O,\ell_i},
  \qquad 1\le i\le q.
\]
Consequently, \(w\) determines \(q\), the positive composition
\(\ell_1+\cdots+\ell_q=n\), and the tuple
\((\widehat\gamma_1,\ldots,\widehat\gamma_q)\).

It remains to estimate how many original walks can produce a fixed tuple.
Fix \(a\), \(q\), the lengths, and the walks
\(\widehat\gamma_1,\ldots,\widehat\gamma_q\).  Each oriented original
piece is the image of the corresponding \(\widehat\gamma_i\) under a
Euclidean isometry: the required isometry is the composition of the
inverse boundary automorphism from Lemma~\ref{lem:boundary-rooting} and
the corresponding isometry in \eqref{eq:m-cell}.

When \(m\ge1\), the first
normalized walk represents the
reverse of \(\gamma_1\).  Its terminal image must therefore be \(a\).  By
Lemma~\ref{lem:image-count}, there are at most
\(C=D^d\) possible image
sequences for this piece.  Each
choice determines its other endpoint, which is \(z_1\).
The image of \(\widehat\gamma_2\) must start at this prescribed vertex
\(z_1\), so the same lemma gives at most \(C\) choices for the second
piece.  Each of those choices determines \(z_2\).  Repeating the argument
piece by piece gives at most \(C^q\) possible sequences of pieces.  When
\(m=0\), Lemma~\ref{lem:image-count} is applied with the initial image
fixed at \(a\), and gives the same bound \(C\) for the
single piece.

Once the images of all pieces are fixed, their concatenation determines the
entire walk \(w\), after the first piece is returned to its original
orientation.  Some of the image sequences counted above may not lie in
the required children, and some concatenations may fail to be
self-avoiding.  Keeping these extra possibilities only enlarges the
upper bound.  For fixed \(q\) and fixed lengths, there are
\(\prod_{i=1}^q c_{\ell_i}\) possible tuples of walks from \(O\).
Finally, summing over \(1\le q\le\min\{Q,n\}\) and over all positive
compositions of \(n\) proves \eqref{eq:bounded-piece}.
\end{proof}

\begin{remark}
Every product on the right-hand side of \eqref{eq:bounded-piece}
contains at most \(Q\) rooted factors; hence at least one factor has
length at least \(\lceil n/Q\rceil\).  This is essential for the lower bound in
Theorem~\ref{thm:critical-identity}(ii).
\end{remark}

\subsection{\texorpdfstring{Submultiplicativity and the
root limit}{Submultiplicativity and the root limit}}

The numbers \(c_n\) are tied to the distinguished vertex
\(O\), and there is no evident submultiplicative inequality for them.
The uniform counts \(u_n\), however, do satisfy such an inequality.

\begin{lemma}
\label{lem:uniform-submultiplicativity}
For all \(r,s\ge0\),
\begin{equation}
  u_{r+s}\le u_ru_s.
  \label{eq:uniform-submultiplicativity}
\end{equation}
Consequently, the limit
\[
  \gamma
    =\lim_{n\to\infty}\frac{1}{n}\log u_n
    =\inf_{n\ge1}\frac{1}{n}\log u_n
\]
exists and belongs to \([0,\log(D-1)]\).
\end{lemma}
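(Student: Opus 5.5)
The plan is to prove \eqref{eq:uniform-submultiplicativity} by the usual splitting of a self-avoiding walk, and then invoke Fekete's lemma. The one point that needs care is that the counts are not rooted at a single vertex. The supremum defining \(u_n\) takes care of this automatically.

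Fix \(a\in V_\infty\) and \(r,s\ge0\). When \(r=0\) or \(s=0\), the inequality is immediate from \(u_0=1\). Assume therefore \(r,s\ge1\). Take \(w\in\cW_{a,r+s}\) and map it to the pair
\[
  \bigl(w',w''\bigr)=\bigl((w(0),\ldots,w(r)),\,(w(r),\ldots,w(r+s))\bigr).
\]
Each piece is self-avoiding, since it is a subwalk of a self-avoiding walk. Hence \(w'\in\cW_{a,r}\) and \(w''\in\cW_{w(r),s}\). Since \(w\) is the concatenation of its two pieces, the map is injective. Now group the walks according to their first piece \(w'\). Each of the \(c_{a,r}\) possible first pieces has at most \(c_{w'(r),s}\le u_s\) extensions. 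This gives \(c_{a,r+s}\le c_{a,r}u_s\le u_ru_s\), and taking the supremum over \(a\) yields \eqref{eq:uniform-submultiplicativity}.

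For the limit, the bound \eqref{eq:elementary-walk-bound} gives \(1\le u_n\le D(D-1)^{n-1}<\infty\) for \(n\ge1\). So \(a_n=\log u_n\) is a finite, nonnegative, subadditive sequence. By Fekete's lemma, \(a_n/n\) converges to \(\inf_{n\ge1}a_n/n=\gamma\). Since \(a_n\ge0\), we get \(\gamma\ge0\). For the upper bound,
\[
  \frac1n\log u_n\le \frac{\log D+(n-1)\log(D-1)}{n}\longrightarrow\log(D-1),
\]
so \(\gamma\le\log(D-1)\). If \(D=2\), then \(\log(D-1)=0\); the same inequality still forces \(\gamma\le0\), hence \(\gamma=0\), and no special case arises. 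This last verification is the only step that needs any care. The rest is the standard argument, and it applies here because \(u_n\) is defined as a supremum over all starting vertices, which removes the need for transitivity.
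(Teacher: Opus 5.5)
Your proposal is correct and follows essentially the same route as the paper. You cut the walk after \(r\) steps and bound \(c_{a,r+s}\le c_{a,r}u_s\le u_ru_s\) before taking the supremum over \(a\). You then apply Fekete's lemma to \(\log u_n\), using the elementary bound \(1\le u_n\le D(D-1)^{n-1}\) to place \(\gamma\) in \([0,\log(D-1)]\).
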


\begin{proof}
Fix \(a\in V_\infty\), and cut a walk in \(\cW_{a,r+s}\) after its first
\(r\) edges.  There are at most \(u_r\) choices for the first part.  Once
that part is fixed, its endpoint is fixed as well.  If we ignore the
condition that the remaining \(s\) edges must avoid the vertices already
visited, there are at most \(u_s\) choices for the second part.  Thus
\(c_{a,r+s}\le u_ru_s\).  Taking the supremum over \(a\) proves
\eqref{eq:uniform-submultiplicativity}.

It follows that \(\log u_n\) is subadditive.  Fekete's lemma therefore
gives the asserted limit and the formula for its infimum.  Finally,
\eqref{eq:walk-counts} and the degree estimate in
Proposition~\ref{prop:basic-graph} give
\(1\le u_n\le D(D-1)^{n-1}\), which yields
\(0\le\gamma\le\log(D-1)\).
\end{proof}

We now return to the walks from \(O\).  Taking the supremum over the
starting vertex \(a\) in Proposition~\ref{prop:bounded-piece},
specifically in \eqref{eq:bounded-piece}, gives
\begin{equation}
  u_n
  \le
  \sum_{q=1}^{\min\{Q,n\}}C^q
  \sum_{\substack{\ell_1+\cdots+\ell_q=n\\ \ell_i\ge1}}
  \prod_{i=1}^q c_{\ell_i}.
  \label{eq:uniform-convolution}
\end{equation}
The important point in \eqref{eq:uniform-convolution} is that the number
of factors is bounded by the fixed constant \(Q\), independently of
\(n\).  We use this to compare the exponential growth of \(c_n\) with
that of \(u_n\).

\begin{lemma}
\label{lem:fixed-convolution}
The convolution estimate \eqref{eq:uniform-convolution}, together with
Lemma~\ref{lem:uniform-submultiplicativity}, implies
\[
  \lim_{n\to\infty}\frac{1}{n}\log c_n=\gamma.
\]
\end{lemma}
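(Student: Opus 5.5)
The upper bound is immediate from \(c_n\le u_n\), which gives \(\limsup_{n}\frac1n\log c_n\le\gamma\) by Lemma~\ref{lem:uniform-submultiplicativity}. The work lies in the lower bound \(\liminf_n\frac1n\log c_n\ge\gamma\). We will obtain it from \eqref{eq:uniform-convolution}, which has at most \(Q\) factors, together with \(u_n\ge e^{\gamma n}\) (the infimum formula in Lemma~\ref{lem:uniform-submultiplicativity}).

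Set \(a_n=\max_{1\le k\le n}c_k\). In \eqref{eq:uniform-convolution} the number of compositions of \(n\) into \(q\le Q\) positive parts is at most \(n^{Q}\). Each product \(\prod_i c_{\ell_i}\) contains one factor of length at least \(\lceil n/Q\rceil\) (the remark after Proposition~\ref{prop:bounded-piece}). The remaining factors we bound using a crude exponential estimate. Fix \(\varepsilon>0\). Since \(\limsup\frac1k\log c_k\le\gamma\), there is \(K_\varepsilon\) with \(c_k\le K_\varepsilon e^{(\gamma+\varepsilon)k}\) for all \(k\). Then
\[
  e^{\gamma n}\le u_n\le Q\,C^Q n^Q K_\varepsilon^{Q}\,
  \max_{\lceil n/Q\rceil\le \ell\le n}\bigl(c_\ell\, e^{(\gamma+\varepsilon)(n-\ell)}\bigr).
\]
Hence for every large \(n\) there is some \(\ell=\ell(n)\in[n/Q,n]\) with
\[
  c_\ell\ge \frac{e^{\gamma\ell-\varepsilon(n-\ell)}}{P(n)},\qquad P(n)=QC^QK_\varepsilon^Qn^Q .
\]
Since \(n\le Q\ell\), this reads \(\frac1\ell\log c_\ell\ge\gamma-\varepsilon Q-o(1)\). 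We therefore get infinitely many good lengths, with gaps controlled: every interval \([n/Q,n]\) contains one.

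The main obstacle is to pass from ``good lengths along a sequence'' to every \(n\), because \(c_n\) is not known to be monotone or supermultiplicative. The plan is to apply the argument directly at each target length in a cleaner form. Fix \(n\) and use the lower bound \(u_N\ge e^{\gamma N}\) for a well-chosen \(N\) rather than for \(N=n\).

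A simpler route avoids this. Let \(\underline\gamma=\liminf\frac1n\log c_n\), and suppose \(\underline\gamma<\gamma\). We would need a monotonicity-type comparison of \(c_\ell\) and \(c_n\) for \(\ell\le n\). Such a comparison is available: \(c_n\ge c_{n'}\) fails in general, but an \(\ell\)-step self-avoiding walk from \(O\) can be extended. The idea is to find an \(m\)-cell containing the walk and continue along a path leaving \(F_m\) inside the next level, which costs at most a bounded factor. That extension step is delicate, though.

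So instead I would argue by monotone domination. In the argument above, replace \(c_\ell\) by \(a_\ell=\max_{k\le\ell}c_k\). That gives \(\frac1n\log a_n\to\gamma\) trivially, but it still does not handle \(c_n\) itself. I therefore expect the genuine step to be a rough monotonicity \(c_{n}\ge c_{k}\) for \(k\le n\), up to constant and polynomial factors. I would try to prove it by extending each \(k\)-step walk from \(O\) through the boundary of the smallest cell \(F_m\) containing it, via the infinite ray of Section~2 adjusted by boundary reflections. Combined with the bound above this yields \(\liminf\frac1n\log c_n\ge\gamma-\varepsilon Q\); letting \(\varepsilon\to0\) completes the proof.
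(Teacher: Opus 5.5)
Your first step is correct and matches the paper's. From \eqref{eq:uniform-convolution}, \(u_n\ge e^{\gamma n}\), and \(c_k\le K_\varepsilon e^{(\gamma+\varepsilon)k}\), you obtain, for every large \(n\), a length \(\ell\in[n/Q,n]\) with \(c_\ell\ge e^{\gamma\ell-\varepsilon(n-\ell)}/P(n)\).

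The gap is in passing from these good lengths to every length. The rough monotonicity you plan to prove, \(c_n\ge c_k/\mathrm{poly}(n)\) for \(k\le n\), does not come from extending walks. A walk can be trapped: on the Vicsek graph, \(O,p_1,p_2,p_3\) cannot be continued, since both neighbors of \(p_3\) are already visited. You also give no argument for a boundedly-many-to-one extension.

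More seriously, even if that monotonicity held, it would not give the bound you claim. The good length attached to \(n\) is only known to lie in \([n/Q,n]\), so \(c_n\ge c_\ell/\mathrm{poly}(n)\) yields only
\[
  \frac1n\log c_n\ \ge\ \frac{\ell}{n}\,(\gamma-\varepsilon Q)-o(1)\ \ge\ \frac{\gamma-\varepsilon Q}{Q}-o(1),
\]
not \(\gamma-\varepsilon Q\). To reach \(\gamma\) by extending walks you would need \(c_n\ge c_\ell\,e^{\gamma(n-\ell)-o(n)}\), which is essentially the lemma itself.

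The missing idea is to transfer downward instead of upward, which costs nothing. Given a target length \(t\), run your selection at \(n=Qt\); this is the ``well-chosen \(N\)'' you mentioned and then abandoned. The selected length \(k\) then satisfies \(k\ge t\). Cutting a rooted \(k\)-step walk after \(t\) steps gives \(c_k\le c_t\,u_{k-t}\). Lemma~\ref{lem:uniform-submultiplicativity} gives \(u_{k-t}\le A_\varepsilon e^{(\gamma+\varepsilon)(k-t)}\). Since \((Qt-k)+(k-t)=(Q-1)t\), this yields \(c_t\ge e^{\gamma t-\varepsilon(Q-1)t}/\bigl(A_\varepsilon P(Qt)\bigr)\) for every large \(t\). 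Hence \(\liminf_t\frac1t\log c_t\ge\gamma-(Q-1)\varepsilon\), and letting \(\varepsilon\downarrow0\) finishes the proof. This is exactly the paper's argument, and it needs neither monotonicity of \(c_n\) nor any extension of walks.
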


\begin{proof}
Fix \(\varepsilon>0\).  By
Lemma~\ref{lem:uniform-submultiplicativity},
\(j^{-1}\log u_j\to\gamma\).  Hence the finitely many small values of
\(j\) can be absorbed into a constant \(A_\varepsilon\ge1\) so that
\[
  u_j\le A_\varepsilon
       \exp\bigl((\gamma+\varepsilon)j\bigr)
  \qquad j\ge0.
\]

For a fixed \(q\), there are
\(\binom{n-1}{q-1}\) positive compositions of \(n\) into \(q\) parts.
Consequently, the sum of all coefficients multiplying the products on
the right-hand side of \eqref{eq:uniform-convolution} is
\begin{equation}
  G_n
    =\sum_{q=1}^{\min\{Q,n\}}
       C^q\binom{n-1}{q-1}
    \le QC^Q n^{Q-1},
  \label{eq:weight-bound}
\end{equation}
where we used \(C\ge1\) and
\(\binom{n-1}{q-1}\le n^{Q-1}\).  By
\eqref{eq:uniform-convolution}, not every product on its right-hand side
can be smaller than \(u_n/G_n\).  Therefore there are
\(q\le Q\) and positive integers
\(\ell_1,\ldots,\ell_q\), with
\(\ell_1+\cdots+\ell_q=n\), such that
\[
  \prod_{i=1}^q c_{\ell_i}\ge\frac{u_n}{G_n}.
\]

Let \(k=\max_i\ell_i\), and choose one factor of this length.  Since there
are at most \(Q\) parts,
\[
  k\ge\left\lceil\frac{n}{q}\right\rceil
  \ge\left\lceil\frac{n}{Q}\right\rceil.
\]
For every other part, use \(c_{\ell_i}\le u_{\ell_i}\) and the preceding
upper bound for \(u_{\ell_i}\).  The sum of those other lengths is
\(n-k\).  We obtain
\begin{equation}
  c_k
  \ge
  \frac{u_n}
       {G_nA_\varepsilon^{q-1}
        \exp\bigl((\gamma+\varepsilon)(n-k)\bigr)}.
  \label{eq:long-factor}
\end{equation}

We next compare this long rooted walk with a rooted walk of any shorter
length.  If \(0\le t\le k\), cutting a walk in \(\cW_{O,k}\) after
\(t\) edges gives
\[
  c_k\le c_tu_{k-t}.
\]

We now choose \(n=Qt\).  This choice guarantees that the length \(k\)
selected above satisfies \(k\ge\lceil n/Q\rceil=t\), so the last
inequality can be applied.  Combining it with \eqref{eq:long-factor} and with
\[
  u_{k-t}\le
  A_\varepsilon
  \exp\bigl((\gamma+\varepsilon)(k-t)\bigr)
\]
gives
\begin{equation}
  c_t
  \ge
  \frac{u_{Qt}}
       {G_{Qt}A_\varepsilon^Q
        \exp\bigl((\gamma+\varepsilon)(Q-1)t\bigr)}.
  \label{eq:rooted-lower-bound}
\end{equation}
Indeed, the two exponential losses combine because
\[
  (Qt-k)+(k-t)=(Q-1)t
\]
and the powers of \(A_\varepsilon\) are bounded by
\(A_\varepsilon^Q\), since \(q\le Q\).

Taking logarithms in \eqref{eq:rooted-lower-bound} and dividing by \(t\)
gives
\[
  \frac{1}{t}\log c_t
  \ge
  \frac{1}{t}\log u_{Qt}
  -\frac{1}{t}\log G_{Qt}
  -\frac{Q}{t}\log A_\varepsilon
  - (Q-1)(\gamma+\varepsilon).
\]
By Lemma~\ref{lem:uniform-submultiplicativity},
\[
  \frac{1}{t}\log u_{Qt}\longrightarrow Q\gamma.
\]
Moreover, \eqref{eq:weight-bound} shows that \(G_{Qt}\) grows
polynomially in \(t\), and hence
\[
  \frac{1}{t}\log G_{Qt}\longrightarrow0,
  \qquad
  \frac{Q}{t}\log A_\varepsilon\longrightarrow0.
\]
Taking the lower limit in the preceding inequality therefore yields
\[
  \liminf_{t\to\infty}\frac{1}{t}\log c_t
    \ge Q\gamma-(Q-1)(\gamma+\varepsilon)
    =\gamma-(Q-1)\varepsilon.
\]
Letting \(\varepsilon\downarrow0\) gives
\[
  \liminf_{t\to\infty}\frac{1}{t}\log c_t\ge\gamma.
\]
On the other hand, \(c_t\le u_t\) by \eqref{eq:walk-counts}, and
Lemma~\ref{lem:uniform-submultiplicativity} therefore gives
\[
  \limsup_{t\to\infty}\frac{1}{t}\log c_t\le\gamma.
\]
The two inequalities prove the lemma.
\end{proof}

\begin{proof}[Proof of Theorem~\ref{thm:connective}]
By Lemmas~\ref{lem:uniform-submultiplicativity} and~
\ref{lem:fixed-convolution},
\[
  \lim_{n\to\infty}c_n^{1/n}
    =\lim_{n\to\infty}u_n^{1/n}
    =e^\gamma
    =\inf_{n\ge1}u_n^{1/n}.
\]
Moreover, Proposition~\ref{prop:basic-graph} and K\"onig's infinity lemma
give an infinite self-avoiding ray from \(O\), so \(c_n\ge1\).  The same
proposition gives the degree bound
\(c_n\le D(D-1)^{n-1}\).  Taking \(n\)-th roots proves
\(1\le\mu\le D-1<\infty\).
\end{proof}

\section{The critical inverse temperature}

In this section, Subsections~4.1 and~4.2 introduce finite
boundary states and compare their partition functions with the
generating function for walks from \(O\), thereby proving
Theorem~\ref{thm:critical-identity}\textup{(i)}.  Subsection~4.3 combines
this comparison with the finite convolution bound from Section~3 to
prove Theorem~\ref{thm:critical-identity}\textup{(ii)}.

For \(m\ge0\), identify the boundary of \(F_m\) with \(V_0\) by
\[
  \iota_m:V_0\longrightarrow\partial F_m,
  \qquad
  \iota_m(q)=\Psi_0^{-m}(q).
\]

\begin{definition}[Boundary states]
\label{def:boundary-state}
A \emph{linear forest} is an acyclic graph whose vertices have degree at
most two.  A \emph{boundary forest} in \(F_m\) is a nonempty edge
subgraph \(\Gamma\subset F_m\) such that
\begin{enumerate}[label=\textup{(\roman*)}]
\item \(\Gamma\) is a linear forest;
\item every vertex of degree one belongs to \(\partial F_m\);
\item every connected component contains at least one edge.
\end{enumerate}
Thus every component is a self-avoiding path with two distinct endpoints
in \(\partial F_m\).

An \emph{abstract boundary forest} is a nonempty linear forest whose
vertex set is contained in \(V_0\) and which has no isolated vertices;
equivalently, each connected component is a path containing at least one
edge.  Let \(\cS\) be the finite set of all such abstract boundary
forests.  Thus an element \(\sigma\in\cS\) may consist of one path or of
several vertex-disjoint paths; it records only a possible finite
combination of boundary connections, not an actual walk in \(F_m\).

To pass from the actual boundary forest \(\Gamma\subset F_m\) to its
abstract boundary pattern, consider each component of \(\Gamma\) in
\(F_m\).  List the boundary vertices in the order in which the path meets
them.  Keep only these listed vertices, replace each portion
between two consecutive listed vertices by one abstract edge, and relabel the
retained vertices by \(\iota_m^{-1}\).  This compression operation is
called taking the \emph{boundary trace}.  The resulting nonempty linear
forest on \(V_0\), hence an element of \(\cS\), is the \emph{boundary
state} of \(\Gamma\), denoted by \(\operatorname{tr}_m(\Gamma)\).  It
records which boundary vertices are visited and how consecutive visits
are connected along each path; it does not record the internal vertices
or the lengths of the individual portions.

For \(\sigma\in\cS\), put
\[
  \cC_m(\sigma)
  =
  \{\Gamma\subset F_m:
    \Gamma\text{ is a boundary forest and }
    \operatorname{tr}_m(\Gamma)=\sigma\},
\]
and, for \(\beta\ge0\), define
\begin{equation}
  Z_m^\sigma(\beta)
  =
  \sum_{\Gamma\in\cC_m(\sigma)}
  e^{-\beta\#E(\Gamma)},
  \qquad
  \bZ_m(\beta)
  =
  \bigl(Z_m^\sigma(\beta)\bigr)_{\sigma\in\cS}.
  \label{eq:boundary-partition}
\end{equation}
We call \(\bZ_m(\beta)\) the \emph{boundary-state partition vector}.
We adjoin the \emph{empty state} \(\varnothing\) and set
\(Z_m^\varnothing(\beta)=1\).
\end{definition}

\begin{definition}[Critical inverse temperature]
\label{def:critical-point}
Let
\begin{equation}
  \cB
  =
  \left\{
    \beta\ge0:
    \sup_{m\ge0}\|\bZ_m(\beta)\|_\infty<\infty
  \right\},
  \qquad
  \beta_{\mathrm c}=\inf\cB,
  \label{eq:critical-point}
\end{equation}
where the infimum is initially understood in the extended half-line
\([0,\infty]\).  The number \(\beta_{\mathrm c}\) is the
\emph{critical inverse temperature} of the boundary-state
renormalization.
\end{definition}

\subsection{\texorpdfstring{Boundary-state
recursions}{Boundary-state recursions}}

The boundary-state construction in Definition~\ref{def:boundary-state}
retains every possible connection pattern at the boundary of a cell.
This is necessary when \(\#V_0\ge4\), because the intersection of one
global self-avoiding walk with a cell may have more than one path
component.

The order retained in Definition~\ref{def:boundary-state} matters.  For
example, a component whose boundary trace is
\(q_0-q_1-q_2\) has degree two at \(q_1\), whereas a component with trace
\(q_0-q_2\) does not use \(q_1\).

\begin{figure}[ht]
\centering
\begin{overpic}[width=.62\textwidth]{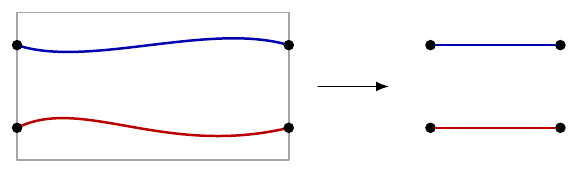}
  \put(26.5,26){\makebox(0,0){\small\color{gray!75!black}\(F_m\)}}
  \put(3,5.4){\makebox(0,0)[l]{\small\(\iota_m(q_0)\)}}
  \put(50,5.4){\makebox(0,0)[r]{\small\(\iota_m(q_1)\)}}
  \put(3,24.7){\makebox(0,0)[l]{\small\(\iota_m(q_2)\)}}
  \put(50,24.7){\makebox(0,0)[r]{\small\(\iota_m(q_3)\)}}
  \put(61,17.5){\makebox(0,0){\small take the trace}}
  \put(75,5.3){\makebox(0,0){\small \(q_0\)}}
  \put(97,5.3){\makebox(0,0){\small \(q_1\)}}
  \put(75,24.4){\makebox(0,0){\small \(q_2\)}}
  \put(97,24.4){\makebox(0,0){\small \(q_3\)}}
  \put(86,1.3){\makebox(0,0){\small boundary state \(\sigma\)}}
\end{overpic}
\vspace{3pt}
\caption{A boundary forest and its boundary state.  Internal vertices and
the shapes of the two paths are suppressed, while their boundary
connections are retained.}
\label{fig:boundary-state}
\end{figure}
\FloatBarrier

Figure~\ref{fig:boundary-state} records precisely the information kept
by a boundary state.

\begin{proposition}
\label{prop:boundary-renormalization}
There is a polynomial map
\[
  \Phi:\mathbb R_+^{\cS}\longrightarrow\mathbb R_+^{\cS}
\]
with nonnegative integer coefficients such that
\begin{equation}
  \bZ_{m+1}(\beta)=\Phi\bigl(\bZ_m(\beta)\bigr),
  \qquad m\ge0,\quad \beta\ge0.
  \label{eq:boundary-renormalization}
\end{equation}
The map depends only on the incidence pattern of the first-level
fractal cells and the identifications of their boundary vertices.
\end{proposition}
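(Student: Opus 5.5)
We use the decomposition of \(F_{m+1}\) into its \(M\) \(m\)-cells, \(F_{m+1}=\bigcup_{i}S_{m,i}(F_m)\), with \(S_{m,i}=\Psi_0^{-(m+1)}\Psi_i\Psi_0^m\) as in the proof of Proposition~\ref{prop:basic-graph}. Let \(\Gamma\subset F_{m+1}\) be a boundary forest. By Proposition~\ref{prop:cell-hierarchy}, every edge of \(\Gamma\) lies in exactly one child cell \(\Delta_i=S_{m,i}(F_m)\). For each \(i\), put \(\Gamma_i=\Gamma\cap\Delta_i\), the edge subgraph of edges in \(\Delta_i\). We first check that \(S_{m,i}^{-1}(\Gamma_i)\) is either empty or a boundary forest in \(F_m\). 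It is a linear forest, being a subgraph of one. A vertex of degree one in \(\Gamma_i\) either has degree one in \(\Gamma\), and so lies in \(\partial F_{m+1}\), or has its second \(\Gamma\)-edge in a different child. In the second case it lies in two children and is therefore a boundary vertex of \(\Delta_i\) by Proposition~\ref{prop:cell-hierarchy}. Points of \(\partial F_{m+1}=\Psi_0^{-(m+1)}(V_0)\) are fixed points of the blown-up maps, so \(\partial F_{m+1}\cap\Delta_i\subset\partial\Delta_i\) holds as well. This needs a short argument, using that the essential fixed points are boundary points of the level-one cells.

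Next we record the state \(\sigma_i=\operatorname{tr}_m(S_{m,i}^{-1}\Gamma_i)\in\cS\cup\{\varnothing\}\). Since each child has exactly \(b\) boundary vertices, \(S_{m,i}\) induces a bijection \(\pi_i:V_0\to\Psi_0^{-1}\Psi_i(V_0)\subset V_1\). This bijection is independent of \(m\), because \(S_{m,i}\iota_m=\Psi_0^{-(m+1)}\Psi_i=\iota_{m+1}\Psi_0^{-1}\)-conjugate of \(\Psi_0^{-1}\Psi_i\) on \(V_0\). Gluing the transported abstract forests \(\pi_i(\sigma_i)\) along the shared vertices of \(V_1\) gives a multigraph \(G(\sigma_0,\dots,\sigma_{M-1})\) on \(V_1\). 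We call a tuple \emph{admissible with output \(\sigma\)} if the following hold: \(G\) is a linear forest, every degree-one vertex of \(G\) lies in \(\Psi_0^{-1}(V_0)\), and compressing \(G\) to the vertices of \(\Psi_0^{-1}(V_0)\), in the order along each path, gives \(\sigma\). The compression is the same trace operation, relabelled by \(\Psi_0\). Whether a tuple is admissible, and which \(\sigma\) it produces, depends only on the tuple and on the fixed identification data \(\{\pi_i\}\), and not on \(m\).

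The core claim is the bijection
\[
  \cC_{m+1}(\sigma)\;\longleftrightarrow\;\bigsqcup_{(\sigma_i)\text{ admissible with output }\sigma}\ \prod_{i:\sigma_i\ne\varnothing}\cC_m(\sigma_i),
\]
given by \(\Gamma\mapsto(S_{m,i}^{-1}\Gamma_i)_i\), together with \(\#E(\Gamma)=\sum_i\#E(\Gamma_i)\). In the forward direction, \(\Gamma\) being a linear forest with the correct trace forces exactly the admissibility conditions on \(G\). The reason is that the internal paths of each \(\Gamma_i\) meet other cells only at child boundary vertices, so degrees, acyclicity and connection order in \(\Gamma\) are read off from \(G\). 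In the reverse direction, take any choice of boundary forests with the prescribed states and form the union of their images. Distinct children share only boundary vertices, by Proposition~\ref{prop:cell-hierarchy}, so gluing creates no new incidences beyond those in \(G\). Acyclicity and degree at most two then transfer from \(G\), and the union is a boundary forest of trace \(\sigma\). Summing \(e^{-\beta\#E}\) over the bijection gives
\[
  Z_{m+1}^\sigma=\sum_{(\sigma_i)\text{ adm.}}\prod_i Z_m^{\sigma_i},\qquad Z_m^{\varnothing}=1.
\]
This is a polynomial in \(\bZ_m\) with nonnegative integer coefficients, namely the number of admissible tuples with given multiset of factors. We take this as the definition of \(\Phi\).

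We expect the main obstacle to be the reverse direction of the bijection, specifically the verification that gluing cannot create a cycle or a degree-three vertex that is invisible in \(G\). The argument will be that every vertex of the union that lies in two or more children is in \(V_1\) and is recorded in \(G\) with its full degree, since each \(\sigma_i\) records the degree, either one or two, of \(\Gamma_i\) at every child-boundary vertex it visits. A vertex visited in the interior of a path has degree two in its trace, and a boundary vertex not visited is absent. Hence degrees in the union equal degrees in \(G\), and a cycle in the union would project to a cycle in \(G\). A smaller point to confirm is that \(\partial F_{m+1}\) meets each child only at that child's boundary. This follows from (NF4) applied to the level-one structure, since each essential fixed point \(q_j\) lies in \(\Psi_j(V_0)\).
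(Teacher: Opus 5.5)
Your proposal is correct and follows essentially the same route as the paper: you restrict a boundary forest to the $M$ child $m$-cells and glue the transported child states on the first-level boundary set. Admissibility is defined by acyclicity, degree at most two, and degree-one vertices lying on the parent boundary, and you check that restriction and union are inverse bijections with multiplicative weights. Your explicit verification that $\partial F_{m+1}\cap\Delta_i\subset\partial\Delta_i$ via \textup{(NF4)}, and that the glued abstract graph records both degrees and cycles of the union exactly, fills in details the paper leaves implicit.
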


\begin{proof}
Let \(\cS_0=\cS\cup\{\varnothing\}\).  For a tuple
\(\boldsymbol\sigma=(\sigma_0,\ldots,\sigma_{M-1})\in\cS_0^M\), transport
\(\sigma_i\) to the boundary \(\Psi_i(V_0)\) of the first-level fractal
cell \(\Psi_i(K)\), and identify vertices which represent the same point.
This produces an abstract graph whose vertex set is contained in
\[
  \bigcup_{i=0}^{M-1}\Psi_i(V_0).
\]
We call the tuple \emph{admissible} if this graph is acyclic, has maximum degree
at most two, and every vertex of degree one belongs to
\(V_0\), the boundary of the parent cell.  Suppressing
all vertices outside \(V_0\) then produces
a well-defined state in \(\cS\).  Let \(\mathfrak C_\sigma\) be the finite
set of admissible tuples whose resulting state is \(\sigma\).

We claim that
\begin{equation}
  Z_{m+1}^{\sigma}(\beta)
  =
  \sum_{\boldsymbol\sigma\in\mathfrak C_\sigma}
  \prod_{i=0}^{M-1} Z_m^{\sigma_i}(\beta),
  \qquad
  Z_m^\varnothing(\beta)=1.
  \label{eq:state-coordinate-recursion}
\end{equation}
Indeed, Proposition~\ref{prop:cell-hierarchy} assigns every edge of
\(F_{m+1}\) to a unique \(m\)-subcell.  Restricting a boundary forest to
these subcells therefore gives a unique tuple in \(\cS_0^M\).  Any
degree-one vertex created by the restriction lies on the boundary of the
corresponding subcell.  After the portions of the forest inside each
child copy of \(F_m\) have been compressed to their child states, the resulting
abstract union records the original forest, including the junctions at
the junction vertices in the interior of the parent cell that arise from
the boundaries of its \(m\)-subcells.  Suppressing these internal
junctions gives exactly the boundary state of the original forest.  Hence
the tuple is admissible.

Conversely, let
\(\boldsymbol\sigma=(\sigma_0,\ldots,\sigma_{M-1})\in\mathfrak C_\sigma\).
For each \(i\in\{0,\ldots,M-1\}\), choose a forest
\(\Gamma_i\in\cC_m(\sigma_i)\) when \(\sigma_i\ne\varnothing\), and set
\(\Gamma_i=\varnothing\) when \(\sigma_i=\varnothing\).  Distinct child
copies of \(F_m\) have disjoint edge sets and meet only at boundary vertices by
Proposition~\ref{prop:cell-hierarchy}.  More precisely, admissibility
means that \(\bigcup_i\Gamma_i\) is acyclic, every vertex of this union
has degree at most two, and every degree-one vertex belongs to the
boundary \(V_0\) of the parent cell.  Therefore
\(\bigcup_i\Gamma_i\)
is a boundary forest with state \(\sigma\).

These two operations are inverse to each other.  Moreover, the edge sets
of distinct child copies are disjoint.  Hence the number of edges in the
union is the sum of the edge counts in the children, while its weight is
the product of their weights.  Summing over all admissible tuples proves
\eqref{eq:state-coordinate-recursion}.

The first-level incidence pattern is the same at every scale, so the finite sets
\(\mathfrak C_\sigma\) do not depend on \(m\).  Collecting
\eqref{eq:state-coordinate-recursion} over \(\sigma\in\cS\) gives
\eqref{eq:boundary-renormalization}.
\end{proof}

We also require states with one free endpoint.  A \emph{marked boundary
forest} in \(F_m\) is a pair \((\Gamma,\xi)\), where \(\Gamma\) satisfies
conditions (i) and (iii) of Definition~\ref{def:boundary-state},
\(\xi\) is a distinguished degree-one vertex, and every degree-one vertex
other than \(\xi\) belongs to \(\partial F_m\).  The point \(\xi\) is
allowed to lie on \(\partial F_m\).  We introduce a new formal symbol
\(\star\), which is not a vertex of \(F_m\) or \(V_0\), to represent a
marked endpoint lying in the interior of \(F_m\).  The \emph{marked
boundary trace} \(\operatorname{tr}_m^\bullet(\Gamma,\xi)\) is defined
by applying the same compression procedure to \((\Gamma,\xi)\), with
the following modification: if \(\xi\in\partial F_m\),
the corresponding vertex \(\iota_m^{-1}(\xi)\in V_0\) is marked; if
\(\xi\notin\partial F_m\), the symbol \(\star\) is marked instead.  In
both cases the trace records which component contains the marked point.
Let \(\cS^\bullet\) be the resulting finite set of marked states, and set
\[
  A_m^{\sigma^\bullet}(\beta)
  =
  \sum_{\substack{(\Gamma,\xi)\text{ marked}\\
                  \operatorname{tr}_m^\bullet(\Gamma,\xi)=\sigma^\bullet}}
  e^{-\beta\#E(\Gamma)},
  \qquad
  \bA_m(\beta)
  =
  \bigl(A_m^{\sigma^\bullet}(\beta)\bigr)_
        {\sigma^\bullet\in\cS^\bullet}.
\]

\begin{proposition}
\label{prop:marked-renormalization}
There is a matrix \(\mathcal L(\mathbf x)\), whose entries are polynomials
in \(\mathbf x=(x_\sigma)_{\sigma\in\cS}\) with nonnegative integer
coefficients, such that
\begin{equation}
  \bA_{m+1}(\beta)
  =
  \mathcal L\bigl(\bZ_m(\beta)\bigr)\bA_m(\beta).
  \label{eq:marked-renormalization}
\end{equation}
Consequently, if
\(\sup_m\|\bZ_m(\beta)\|_\infty<\infty\), then there is
\(C_\beta<\infty\) such that
\begin{equation}
  \|\bA_m(\beta)\|_\infty\le C_\beta^{\,m+1},
  \qquad m\ge0.
  \label{eq:marked-growth}
\end{equation}
\end{proposition}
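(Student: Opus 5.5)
The plan is to mirror the proof of Proposition~\ref{prop:boundary-renormalization}, now keeping track of the single marked endpoint. Let \((\Gamma,\xi)\) be a marked boundary forest in \(F_{m+1}\). By Proposition~\ref{prop:cell-hierarchy}, every edge of \(F_{m+1}\) lies in exactly one of the \(M\) children \(\Delta_{m;m+1,i}\). We restrict \(\Gamma\) to these children. The marked endpoint \(\xi\) is a degree-one vertex of \(\Gamma\), so exactly one edge of \(\Gamma\) is incident with it; call \(i_0\) the unique child containing that edge. Then:
\begin{itemize}
\item the restriction to child \(i_0\) becomes a marked boundary forest in that child, with marked point \(\xi\);
\item the restrictions to the other children are unmarked boundary forests or empty.
\end{itemize}
Taking traces in every child produces a tuple \((\sigma_0,\ldots,\sigma_{i_0}^\bullet,\ldots,\sigma_{M-1})\), with exactly one marked entry.

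We call such a tuple admissible when two conditions hold:
\begin{itemize}
\item the glued abstract graph on \(\bigcup_i\Psi_i(V_0)\cup\{\star\}\) is a linear forest without isolated vertices;
\item every degree-one vertex other than the marked one lies in \(V_0\).
\end{itemize}
Suppressing the non-\(V_0\) junctions then gives a marked state \(\sigma^\bullet\in\cS^\bullet\). If the child's marked point is \(\star\), or a child boundary vertex not in \(V_0\), the parent mark becomes \(\star\); if it lies in \(V_0\), the parent mark is that vertex.

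Exactly as in \eqref{eq:state-coordinate-recursion}, restriction and union are mutually inverse bijections between marked forests with a given trace and the products of child configurations over admissible tuples. Edge counts add, so weights multiply. This gives
\[
  A_{m+1}^{\sigma^\bullet}
  =\sum_{i_0}\sum_{(\boldsymbol\sigma,\tau^\bullet)}
   A_m^{\tau^\bullet}\prod_{i\ne i_0}Z_m^{\sigma_i},
  \qquad Z_m^\varnothing=1,
\]
where the inner sum runs over the finite set of admissible tuples producing \(\sigma^\bullet\). This set is independent of \(m\), because the first-level incidence pattern is the same at every scale. The expression is linear in \(\bA_m\), with coefficients that are polynomials in \(\bZ_m\) with nonnegative integer coefficients. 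This defines \(\mathcal L\) and proves \eqref{eq:marked-renormalization}.

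For \eqref{eq:marked-growth}, suppose \(\|\bZ_m(\beta)\|_\infty\le K\) for all \(m\). Since \(\mathcal L\) has finitely many entries, each a polynomial with nonnegative coefficients, the operator norm satisfies \(\|\mathcal L(\bZ_m)\|_{\infty\to\infty}\le L_K\), where \(L_K\) is obtained by evaluating the row sums of \(\mathcal L\) at the constant vector \(K\). Iterating the recursion gives \(\|\bA_m\|_\infty\le L_K^m\|\bA_0(\beta)\|_\infty\). The base value \(\bA_0(\beta)\) is a finite sum, since \(F_0\) is finite, and \(e^{-\beta\ell}\le1\). So we may take \(C_\beta=\max\{1,L_K,\|\bA_0(\beta)\|_\infty\}\).

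The main obstacle is the bookkeeping for the mark. We must check that the marked point \(\xi\) belongs to exactly one child edge, so that the child index \(i_0\) is well defined. We must also handle the case where \(\xi\) is a child boundary vertex shared with other children: it must still have degree one in the union, and admissibility must encode this. Finally, the rule translating child marks to parent marks (\(\star\) versus a vertex of \(V_0\)) must be consistent, so that the correspondence is truly bijective and the parent trace is determined by the tuple alone.
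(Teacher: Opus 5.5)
Your proposal is correct and follows the same route as the paper: you put the mark on the unique child containing the edge at \(\xi\), use restriction and gluing to get a recursion linear in \(\bA_m\) with coefficients polynomial in \(\bZ_m\), and obtain the growth bound by bounding these finitely many coefficients on a cube and iterating. One thing to tidy up: write into your admissibility definition the condition you flag at the end, namely that the marked vertex still has degree one in the glued graph (no other child state uses it); without it, the union over an admissible tuple need not be a marked forest and the correspondence would overcount.
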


\begin{proof}
The marked vertex \(\xi\) has degree one in \(\Gamma\), so exactly one
edge of \(\Gamma\) is incident with \(\xi\).  By
Proposition~\ref{prop:cell-hierarchy}, this edge belongs to exactly one
child copy of \(F_m\).  We put the mark on that child restriction.  Every
other nonempty child restriction is an unmarked boundary forest, even if
its copy shares the boundary vertex \(\xi\).

For a fixed marked state of the parent cell, whether a given collection of
child states can be joined to form that parent state is determined solely by
the child states, as in the proof of Proposition~\ref{prop:boundary-renormalization}.
Therefore each parent marked partition function is a finite sum of terms
consisting of exactly one marked factor and at most \(M-1\) unmarked
factors.  This gives the exact linear recursion
\eqref{eq:marked-renormalization}.

If \(\|\bZ_m(\beta)\|_\infty\le K_\beta\) for all \(m\), the finitely many entries
of \(\mathcal L\) are bounded on the cube
\([0,K_\beta]^{\cS}\).  Hence
\[
  \|\bA_{m+1}(\beta)\|_\infty
  \le C_\beta\|\bA_m(\beta)\|_\infty
\]
for some \(C_\beta<\infty\).  Iteration,
with one further increase of \(C_\beta\) to include
\(\|\bA_0(\beta)\|_\infty\), proves
\eqref{eq:marked-growth}.
\end{proof}

Since all terms in \eqref{eq:boundary-partition} are nonnegative,
\(\bZ_m(\beta')\le\bZ_m(\beta)\) coordinatewise whenever
\(\beta'\ge\beta\).  Hence \(\beta\in\cB\) and \(\beta'\ge\beta\)
imply \(\beta'\in\cB\).  Proposition~\ref{prop:boundary-renormalization}
gives the fixed finite-dimensional polynomial recursion for the boundary-state
partition functions, while Proposition~\ref{prop:marked-renormalization}
gives the corresponding linear recursion and growth estimate for marked
partition functions.  We use these two recursions below to analyze bounded
orbits and the critical inverse temperature in
Definition~\ref{def:critical-point}.

\subsection{\texorpdfstring{Comparison with rooted
walks}{Comparison with rooted walks}}

\begin{lemma}
\label{lem:logarithmic-localization}
There are constants \(c_0,c_1<\infty\), depending only on the graph, such
that every self-avoiding walk of length \(n\) starting at \(O\) is
contained in \(F_{m(n)}\), where
\[
  m(n)\le c_0+c_1\log(n+1).
\]
\end{lemma}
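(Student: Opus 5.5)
The plan is to show that a walk from \(O\) which leaves \(F_m\) must have length exponentially large in \(m\). This follows from two facts: the graph distance from \(O\) to the complement of \(F_m\) grows at least geometrically in \(m\), and \(F_m\subset F_{m+1}\) (Proposition~\ref{prop:basic-graph}).

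The geometric input is the following. A walk from \(O\) that uses an edge not in \(F_m\) must first exit \(F_m\). Let \(m\ge1\) and write \(F_{m+1}=\bigcup_i S_{m,i}(F_m)\) as in the proof of Proposition~\ref{prop:basic-graph}. Since \(F_m\) is an \(m\)-cell and distinct \(m\)-cells meet only at boundary vertices (Proposition~\ref{prop:cell-hierarchy}), any walk starting at \(O\in F_m\) that traverses an edge outside \(F_m\) must pass through a vertex of \(\partial F_m=\Psi_0^{-m}(V_0)\) other than \(O\). If it passes through a boundary vertex of \(F_m\) that is also a vertex of another \(m\)-cell, the walk has gone from \(O\) to some \(\Psi_0^{-m}(q)\) with \(q\ne O\).

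It therefore suffices to bound below the graph distance \(d_m:=\min_{q\in V_0\setminus\{O\}}\operatorname{dist}_{F_\infty}(O,\Psi_0^{-m}(q))\). For this I will use a Euclidean estimate. Every edge of \(F_\infty\) is an isometric copy of an edge of \(F_0\), so each edge has Euclidean length \(\delta_0\). Hence any walk of length \(n\) stays within Euclidean distance \(n\delta_0\) of \(O\). On the other hand, \(|\Psi_0^{-m}(q)-O|=\lambda^m|q-O|\ge\lambda^m\delta_0\), because \(\Psi_0\) fixes \(O\) and has ratio \(\lambda^{-1}\). Reaching \(\Psi_0^{-m}(q)\) therefore forces \(n\ge\lambda^m\).

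To conclude, let \(w\) be a self-avoiding walk from \(O\) of length \(n\). Let \(m\) be minimal with all edges of \(w\) in \(F_m\); such an \(m\) exists because \(w\) is finite. If \(m\ge1\), then \(w\) is not contained in \(F_{m-1}\), so by the exit argument applied at level \(m-1\) the walk visits some \(\Psi_0^{-(m-1)}(q)\) with \(q\ne O\). This gives \(n\ge\lambda^{m-1}\), so \(m\le 1+\log n/\log\lambda\). We may take \(m(n)\) to be this \(m\), with \(c_0=1\) and \(c_1=1/\log\lambda\). The case \(m=0\) is trivial.

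The one step requiring care is the exit argument. One must check that a walk starting at \(O\) and using an edge outside \(F_{m-1}\) passes through a boundary point of \(F_{m-1}\) different from \(O\). Consider the first edge not in \(F_{m-1}\). Its initial vertex \(v\) lies in \(F_{m-1}\) and also belongs to an edge of a different \((m-1)\)-cell. By Proposition~\ref{prop:cell-hierarchy}, \(v\in\partial F_{m-1}\). If \(v=O\), I will observe that \(O\) lies only in \(F_{m-1}\) among the cells of the decomposition of \(F_m\). Indeed, \(O\) is a vertex of the \(m\)-cell \(F_m\) that is not shared with any other child, since \(O\in V_0\) is not an essential junction. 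If that fails, I would instead use the Euclidean bound directly against all of \(\partial F_{m-1}\setminus\{O\}\) after leaving \(F_{m-1}\).

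Alternatively, one can avoid the issue entirely with a pure Euclidean argument. The vertex set \(V_{m-1}\) lies in \(\Psi_0^{-(m-1)}(K)\), which has diameter comparable to \(\lambda^{m-1}\). An edge lying outside \(F_{m-1}\) lies in a different \((m-1)\)-cell. I would then aim to show that some vertex of \(w\) is at distance at least \(c\lambda^{m-1}\) from \(O\), and this fallback route is what I would pursue if the combinatorial exit step proves delicate.
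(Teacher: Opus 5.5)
Your route is the paper's route. An exit from \(F_{m-1}\) happens at a vertex of \(\partial F_{m-1}=\Psi_0^{-(m-1)}(V_0)\). Any such vertex other than \(O\) lies at Euclidean distance at least \(\lambda^{m-1}\delta_0\) from \(O\), while every edge has length \(\delta_0\).

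The gap is the case where the first edge outside \(F_{m-1}\) starts at \(O\) itself. By self-avoidance this can only be the first step of \(w\). Excluding it then requires that no edge of \(F_m\) at \(O\) lies outside \(F_{m-1}\), that is, that \(O\) belongs to no other child of \(F_m\). Your justification (``\(O\in V_0\) is not an essential junction'') is not an argument. \(O\) is an essential fixed point by construction, and (NF1)--(NF4) do not directly rule out \(O\in K_0\cap K_j\) for some \(j\ne0\). Your first fallback does not help: a walk that leaves \(F_{m-1}\) through \(O\) can stay in a neighbouring child and never approach \(\partial F_{m-1}\setminus\{O\}\). It may even have length one, so no Euclidean lower bound is available. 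Your second fallback is only stated as an aim.

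The paper closes this case without any structural claim. Since \(\deg O<\infty\) and \(F_\infty=\bigcup_mF_m\), there is \(m_0\) such that every edge at \(O\) lies in \(F_{m_0}\). The paper then takes \(m(n)=\max\{m_0,\lceil\log_\lambda(n+1)\rceil\}\). An exit through \(O\) is impossible at the first step by the choice of \(m_0\), and at later steps by self-avoidance. In your formulation this simply gives \(m\le\max\{m_0,1+\log_\lambda n\}\).

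Alternatively, you can actually prove your claim. Suppose \(O\) lay in a second child of some \(F_m\). Then \(O\in K_0\cap K_j\) for some \(j\ne0\), so (NF4) gives \(O=\Psi_j(q)\) with \(q\in V_0\). Consequently \(O\) is a vertex of the child \(\Psi_0^{-m'}\Psi_j\Psi_0^{m'-1}(F_{m'-1})\) of \(F_{m'}\) for every \(m'\ge1\). Each such child contributes a distinct edge at \(O\) lying in \(F_{m'}\setminus F_{m'-1}\). This gives \(O\) infinite degree, contradicting the bounded degree in Proposition~\ref{prop:basic-graph}.
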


\begin{proof}
By the definition of \(\delta_0\), every edge has length \(\delta_0\),
and distinct points of \(V_0\) are at least \(\delta_0\) apart.
The degree of \(O\) is finite and \(F_\infty=\bigcup_mF_m\), so there is
\(m_0\) such that every edge incident with \(O\) belongs to \(F_{m_0}\).
For \(n\ge1\), choose
\[
  m(n)=
  \max\left\{
    m_0,\,
    \left\lceil\log_{\lambda}(n+1)\right\rceil
  \right\}.
\]

Suppose that a self-avoiding walk \(w\) from \(O\) has an edge outside
\(F_{m(n)}\), and consider its first such edge.  Its initial vertex is a
point at which \(F_{m(n)}\) meets a different \(m(n)\)-cell.  By
Proposition~\ref{prop:cell-hierarchy}, this point belongs to
\(\partial F_{m(n)}\).  It cannot be \(O\): an exit through \(O\) on the
first step is excluded by the choice of \(m_0\), and a later exit through
\(O\) would revisit the initial vertex.  The walk must therefore reach
\(\iota_{m(n)}(q)\) for some \(q\in V_0\setminus\{O\}\) before it exits.
The length of the part already traversed is at least
\[
  \frac{|\iota_{m(n)}(q)-O|}{\delta_0}
  =
  \frac{\lambda^{m(n)}|q-O|}{\delta_0}
  \ge
  \lambda^{m(n)}
  \ge
  n+1
  >
  n,
\]
which is impossible.  Thus \(w\subset F_{m(n)}\).  The asserted
logarithmic bound follows immediately from the definition of \(m(n)\).
\end{proof}

For \(\beta\ge0\), let
\begin{equation}
  \chi(\beta)=\sum_{n=0}^{\infty}c_ne^{-\beta n},
  \label{eq:susceptibility}
\end{equation}
be the rooted self-avoiding walk \emph{susceptibility}, or equivalently its
length-generating function.

\begin{proposition}
\label{prop:critical-comparison}
For every \(\beta\ge0\), the following implications hold.
\begin{enumerate}[label=\textup{(\roman*)}]
\item If \(\beta\in\cB\), then
  \(\chi(\beta')<\infty\) for every \(\beta'>\beta\).
\item If \(\chi(\beta)<\infty\), then \(\beta\in\cB\).
\end{enumerate}
\end{proposition}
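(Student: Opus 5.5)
For (i), I will bound \(c_n\) by marked partition functions. Fix \(\beta\in\cB\) and \(\beta'>\beta\). A self-avoiding walk \(w\) of length \(n\ge1\) from \(O\) lies in \(F_{m(n)}\) by Lemma~\ref{lem:logarithmic-localization}. Its edge set, with \(\xi\) the terminal vertex, is a marked boundary forest in \(F_{m(n)}\). Indeed, it is a single path, the other degree-one vertex is \(O\in\partial F_{m(n)}\), and \(\xi\) may lie anywhere. The walk is recovered from the pair \((\Gamma,\xi)\), since \(O\) is the other endpoint. Hence
\[
  c_ne^{-\beta n}\le\sum_{\sigma^\bullet}A_{m(n)}^{\sigma^\bullet}(\beta)\le\#\cS^\bullet\,C_\beta^{\,m(n)+1}
\]
by \eqref{eq:marked-growth}. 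Since \(m(n)\le c_0+c_1\log(n+1)\), the right side is at most a polynomial \(A n^{p}\) with \(p=c_1\log C_\beta\). Therefore \(c_ne^{-\beta'n}\le An^pe^{-(\beta'-\beta)n}\), which is summable, and \(\chi(\beta')<\infty\). The only care needed is the convention for the empty walk and that distinct walks give distinct marked forests; both are immediate.

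For (ii), assume \(\chi(\beta)<\infty\). I need to bound \(Z_m^\sigma(\beta)\) uniformly in \(m\) by a constant depending on \(\chi(\beta)\). A boundary forest \(\Gamma\) in \(F_m\) with state \(\sigma\) has at most \(\lfloor b/2\rfloor\) components. Each component is a self-avoiding path between two boundary vertices of \(F_m\). Dropping vertex-disjointness, I get
\[
  Z_m^\sigma(\beta)\le\prod_{\text{components}}\ \sum_{\gamma}e^{-\beta L(\gamma)},
\]
where \(\gamma\) ranges over self-avoiding paths in \(F_m\) starting at a fixed boundary vertex \(\iota_m(q)\), ignoring the prescribed endpoint and the intermediate boundary visits. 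Lemma~\ref{lem:boundary-rooting} gives an automorphism \(A_{m,\iota_m(q)}\) of \(F_m\) carrying \(\iota_m(q)\) to \(O\). Self-avoiding paths in \(F_m\) from \(\iota_m(q)\) are therefore in length-preserving bijection with those in \(F_m\) from \(O\), and the latter number at most \(c_n\) at length \(n\). Each factor is thus at most \(\chi(\beta)\), and \(Z_m^\sigma(\beta)\le\chi(\beta)^{b}\) uniformly in \(m\). This gives \(\beta\in\cB\).

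The main point to verify is the step in (ii): that paths in \(F_m\) from a boundary vertex correspond to rooted walks counted by \(c_n\). This holds because a path inside \(F_m\subset F_\infty\) is a walk in \(F_\infty\), and \(A_{m,x}\) preserves \(F_m\). I will also note that the constant \(b\) bounds the number of components, since each component uses two distinct boundary vertices.
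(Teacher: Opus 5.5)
Your proposal is correct and follows the paper's proof essentially step for step. In (i) you mark each rooted walk at its terminal vertex and combine Lemma~\ref{lem:logarithmic-localization} with the marked growth bound \eqref{eq:marked-growth}. In (ii) you root each component at a boundary endpoint via Lemma~\ref{lem:boundary-rooting}, drop disjointness, and bound each factor by the susceptibility. The only differences are cosmetic: you use \(\chi(\beta)\) instead of \(\chi(\beta)-1\) per factor, and \(\lfloor b/2\rfloor\) instead of \(b\) for the number of components.
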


\begin{proof}
First suppose that \(\beta\in\cB\).  By
Proposition~\ref{prop:marked-renormalization}, there is
\(C_\beta\ge1\) such that \eqref{eq:marked-growth} holds.  Fix
\(n\ge1\), and use Lemma~\ref{lem:logarithmic-localization} to choose
\(m=m(n)\).  Every \(w\in\cW_{O,n}\), marked at its terminal vertex, is
a marked boundary forest in \(F_m\), because its other endpoint is
\(O\in\partial F_m\).  Consequently,
\[
  c_ne^{-\beta n}
  \le
  \sum_{\sigma^\bullet\in\cS^\bullet}
    A_m^{\sigma^\bullet}(\beta)
  \le
  \#\cS^\bullet\,C_\beta^{\,m+1}
  \le
  C(n+1)^{\kappa_\beta}
\]
for constants \(C,\kappa_\beta<\infty\) which may depend on \(\beta\), but not
on \(n\).  If \(\beta'>\beta\), it follows that
\[
  c_ne^{-\beta'n}
  \le
  C(n+1)^{\kappa_\beta} e^{-(\beta'-\beta)n}.
\]
The series on the right is summable, and hence
\(\chi(\beta')<\infty\).  This proves (i).

For (ii), suppose that \(\chi(\beta)<\infty\), and fix
\(\sigma\in\cS\).  Let \(k\) be the number of components of
\(\sigma\).  The state \(\sigma\) specifies the two endpoints of each
component and determines which boundary vertices belong to the same
component.  Order the components using the fixed order
\(q_0,\ldots,q_{b-1}\) of \(V_0\), and orient each one from the endpoint
with the smaller index to the other endpoint.  For
\(\Gamma\in\cC_m(\sigma)\), write its components as
\(\Gamma^{(1)},\ldots,\Gamma^{(k)}\).  The initial vertex of every
oriented component belongs to \(\partial F_m\).  Applying
Lemma~\ref{lem:boundary-rooting} to all components produces an ordered
\(k\)-tuple of nonempty self-avoiding walks starting at \(O\), with the
same respective lengths.

The map from \(\Gamma\) to this tuple is injective.  Indeed, the initial
boundary vertex of each component is fixed by \(\sigma\), so the inverse
of the corresponding automorphism in
Lemma~\ref{lem:boundary-rooting} recovers that component.  If
\(\ell_i=\#E(\Gamma^{(i)})\), then
\[
  \#E(\Gamma)=\ell_1+\cdots+\ell_k.
\]
Using \eqref{eq:boundary-partition} and dropping the requirements that
the recovered components be mutually disjoint and have state
\(\sigma\), we obtain
\[
  Z_m^\sigma(\beta)
  \le
  \sum_{\ell_1,\ldots,\ell_k\ge1}
  \prod_{i=1}^k c_{\ell_i}e^{-\beta\ell_i}
  =
  \bigl(\chi(\beta)-1\bigr)^k.
\]
The number \(k\) is bounded by \(b\), and there are only finitely many
states.  Hence
\(\sup_m\|\bZ_m(\beta)\|_\infty<\infty\), so
\(\beta\in\cB\), as required.
\end{proof}

\begin{proof}[Proof of Theorem~\ref{thm:critical-identity}\textup{(i)}]
By \eqref{eq:elementary-walk-bound}, \(\chi(\beta)<\infty\) whenever
\(\beta>\log(D-1)\).  Proposition~\ref{prop:critical-comparison}(ii)
therefore shows that \(\cB\ne\varnothing\), so
\(\beta_{\mathrm c}<\infty\).

Define the susceptibility threshold
\[
  \beta_\chi
  =
  \inf\{\beta\ge0:\chi(\beta)<\infty\}.
\]
If \(\beta\in\cB\), Proposition~\ref{prop:critical-comparison}(i) gives
\(\chi(\beta')<\infty\) for every \(\beta'>\beta\).  Hence
\(\beta_\chi\le\beta\), and taking the infimum over
\(\beta\in\cB\) yields
\[
  \beta_\chi\le\beta_{\mathrm c}.
\]
Conversely, if \(\chi(\beta)<\infty\), then
Proposition~\ref{prop:critical-comparison}(ii) gives
\(\beta\in\cB\).  Taking the infimum over such \(\beta\) yields
\[
  \beta_{\mathrm c}\le\beta_\chi.
\]
Thus \(\beta_{\mathrm c}=\beta_\chi\).

Finally, Theorem~\ref{thm:connective} gives
\[
  \lim_{n\to\infty}
  \bigl(c_ne^{-\beta n}\bigr)^{1/n}
  =\mu e^{-\beta}.
\]
The root test applied to \eqref{eq:susceptibility} shows that
\(\chi(\beta)\) converges for \(\beta>\log\mu\) and diverges for
\(\beta<\log\mu\).  Therefore \(\beta_\chi=\log\mu\), and hence
\(\beta_{\mathrm c}=\log\mu\).  Equivalently,
\(\mu=e^{\beta_{\mathrm c}}\), as claimed.
\end{proof}

\subsection{\texorpdfstring{Polynomial
bounds}{Polynomial bounds}}

We now prove Theorem~\ref{thm:critical-identity}\textup{(ii)}, assuming
that the infimum in \eqref{eq:critical-point} is attained.  The upper-bound argument is the
boundary-state form of the scale-localization method used by Hattori and
Kusuoka on the Sierpi\'nski gasket~\cite{HattoriKusuoka1992}.  Their
lower bound used a detailed analysis of the critical two-variable
recursion; the lower bound below instead follows from the finite
convolution estimate \eqref{eq:uniform-convolution}.

\begin{proof}[Proof of Theorem~\ref{thm:critical-identity}\textup{(ii)}]
Assume that \(\beta_{\mathrm c}=\min\cB\).  By
Theorem~\ref{thm:critical-identity}(i),
\(\mu=e^{\beta_{\mathrm c}}\).

\emph{Upper bound.}
Since \(\beta_{\mathrm c}\in\cB\),
Proposition~\ref{prop:marked-renormalization} applies at
\(\beta=\beta_{\mathrm c}\).  Together with
Lemma~\ref{lem:logarithmic-localization}, it gives constants \(K\ge1\)
and \(a\ge0\) such that, for \(n\ge1\),
\[
  c_ne^{-\beta_{\mathrm c}n}
  \le
  \#\cS^\bullet\,
  \|\bA_{m(n)}(\beta_{\mathrm c})\|_\infty
  \le
  K(n+1)^a.
\]
Since \(n+1\le2n\) for \(n\ge1\), increasing \(K\) gives
\[
  c_n\le K\mu^n n^a,
  \qquad n\ge1.
\]

\emph{Lower bound.}
Recall the constants \(C,Q\) from
Proposition~\ref{prop:bounded-piece} and the quantity \(G_n\) from
\eqref{eq:weight-bound}.  Put
\[
  s=Q-1+aQ,
  \qquad
  B=QC^QK^Q.
\]
The upper bound just proved, together with
\eqref{eq:uniform-convolution} and \eqref{eq:weight-bound}, gives
\[
  u_j
  \le
  G_jK^Q\mu^j j^{aQ}
  \le
  B\mu^j j^s,
  \qquad j\ge1.
\]

Fix \(t\ge1\) and set \(n=Qt\).  Since
\(\mu=\inf_{j\ge1}u_j^{1/j}\) by
Theorem~\ref{thm:connective}, we have \(u_n\ge\mu^n\).
By \eqref{eq:uniform-convolution} and
\eqref{eq:weight-bound}, there are \(q\le Q\) and positive integers
\(\ell_1,\ldots,\ell_q\), with
\(\ell_1+\cdots+\ell_q=n\), such that
\[
  \prod_{i=1}^q c_{\ell_i}
  \ge
  \frac{u_n}{G_n}
  \ge
  \frac{\mu^n}{QC^Qn^{Q-1}}.
\]
Let \(k=\max_i\ell_i\).  Then
\(k\ge\lceil n/Q\rceil=t\).  Applying the upper
bound to all factors except one of length \(k\) yields

\[
  \prod_{i\ne i_*}c_{\ell_i}
  \le
  K^{q-1}\mu^{n-k}n^{a(q-1)},
  \qquad \ell_{i_*}=k.
\]
Since \(q\le Q\), the definitions of \(B\) and \(s\) therefore give
\[
  c_k
  \ge
  B^{-1}\mu^k n^{-s}.
\]
On the other hand, cutting a rooted \(k\)-step walk after \(t\) edges
gives
\[
  c_k
  \le
  c_tu_{k-t}
  \le
  Bc_t\mu^{k-t}(k-t+1)^s.
\]
Here the last inequality follows from the preceding bound when \(k>t\),
and from \(u_0=1\) when \(k=t\), after increasing \(B\) if necessary.
Since \(k\le n=Qt\), both \(n\) and \(k-t+1\) are at most \(Qt\).
Combining the last two estimates gives
\[
  c_t
  \ge
  B^{-2}Q^{-2s}\mu^t t^{-2s}.
\]
Taking \(p=2s\) and increasing a single constant \(A\ge1\) to cover both
bounds proves the assertion.
\end{proof}

We conclude this section with a useful criterion for the uniform
boundedness of \(\bZ_m(\beta)\).

\begin{remark}
\label{rem:one-edge-states}
For distinct \(i,j\in\{0,\ldots,b-1\}\), let \(e_{ij}\in\cS\) be the
abstract boundary forest consisting of the single edge
\(\{q_i,q_j\}\), and put
\[
  P_m^{ij}(\beta)=Z_m^{e_{ij}}(\beta).
\]
Then
\[
  \sup_{m\ge0}\|\bZ_m(\beta)\|_\infty<\infty
  \quad\Longleftrightarrow\quad
  \sup_{m\ge0}\max_{i\ne j}P_m^{ij}(\beta)<\infty.
\]

Indeed, put \(P_m(\beta)=\max_{i\ne j}P_m^{ij}(\beta)\).  Fix
\(\sigma\in\cS\) and \(\Gamma\in\cC_m(\sigma)\).  Cutting each
component of \(\Gamma\) at its successive visits to \(\partial F_m\)
produces, for every abstract edge \(\{q_i,q_j\}\) of \(\sigma\), a self-avoiding
path in \(\cC_m(e_{ij})\).  These paths partition the edge set of
\(\Gamma\).  The resulting tuple determines \(\Gamma\), and its weights
multiply.  Hence
\[
  Z_m^\sigma(\beta)
  \le
  \prod_{\{q_i,q_j\}\text{ an edge of }\sigma}
  P_m^{ij}(\beta).
\]
Since \(\sigma\) is a linear forest on at most \(b\) vertices, it has at
most \(b-1\) edges.  Consequently,
\[
  P_m(\beta)
  \le
  \|\bZ_m(\beta)\|_\infty
  \le
  \max\{1,P_m(\beta)^{\,b-1}\}.
\]
\end{remark}

\section{Polygonal \texorpdfstring{\(N\)}{N}-gaskets}

This section proves Theorem~\ref{thm:ngasket}.  We first determine the
contact geometry of the first-level fractal cells and the exact crossing
recursions.  We then compare the crossing critical inverse temperature
with the critical inverse temperature of the boundary-state
renormalization and compute the connective constants for \(N=6\) and
\(N=9\).  The complete dynamics of the recursions is given in
Appendix~\ref{app:ngasket-dynamics}.

\begin{definition}[Regular polygonal \(N\)-gaskets]
\label{def:regular-ngasket}
Let \(N\ge3\) with \(4\nmid N\).  Identify \(\R^2\) with \(\mathbb C\), and put
\[
  q_j=\exp(2\pi\mathrm i j/N),
  \qquad
  V_0^{(N)}=\{q_j:j\in\mathbb Z/N\mathbb Z\}.
\]
Let \(F_0^{(N)}\) be the cycle on \(V_0^{(N)}\).  Write
\[
  k_N=\lfloor N/4\rfloor,
  \qquad
  \ell_N=N-4k_N,
  \qquad
  s_N=k_N+1,
\]
and define
\[
  \rho_N
  =
  \frac{\sin(\pi/N)}
       {\sin(\pi/N)+\sin((2s_N-1)\pi/N)},
  \qquad
  \Psi_j^{(N)}(x)=q_j+\rho_N(x-q_j).
\]
The attractor of the \(N\) homotheties
\(\{\Psi_j^{(N)}:j\in\mathbb Z/N\mathbb Z\}\) is the
\emph{regular polygonal \(N\)-gasket}.  Its finite graphs and its one-sided graph
are denoted by \(F_m^{(N)}\) and \(F_\infty^{(N)}\).  In the construction
\eqref{eq:finite-graphs}, the
indices are relabeled by \(\mathbb Z/N\mathbb Z\), the root is
\(O=q_0\), and \(\Psi_0^{(N)}\) is the map fixing \(O\).  The number
\(\rho_N\) is the \emph{first-contact
ratio}~\cite[Section~7]{TysonWu2006}.
When \(4\mid N\), two consecutive first-level fractal cells in the
first-contact construction share a non-degenerate line segment.  Since
\(V_0^{(N)}\) is finite, this intersection cannot equal the intersection
of their boundary vertex sets.  Thus \textup{(NF4)} fails, so this
construction is not a nested fractal and is excluded from the definition.
\end{definition}

\begin{figure}[ht]
\centering
\begin{minipage}{.62\textwidth}
\centering
\begin{overpic}[width=\linewidth]{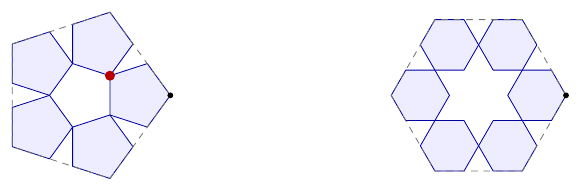}
  \put(19,22.1){\makebox(0,0)[b]{\scriptsize\(\zeta_0\)}}
  \put(30.2,16.5){\makebox(0,0)[l]{\scriptsize\(q_0\)}}
  \put(98.2,16.5){\makebox(0,0)[l]{\scriptsize\(q_0\)}}
\end{overpic}
\par\vspace{2pt}
\makebox[\linewidth]{%
  \makebox[.50\linewidth]{\small (a) the \(5\)-gasket}%
  \makebox[.50\linewidth]{\small (b) the \(6\)-gasket}}
\end{minipage}
\caption{First-level polygonal hulls for two polygonal \(N\)-gaskets.
The dashed polygon has vertex set \(V_0^{(N)}\), and each blue polygon
is the hull of one first-level fractal cell.  In (a), \(\zeta_0\) marks
the contact point between two adjacent first-level fractal cells.}
\label{fig:ngasket-first-level}
\end{figure}
\FloatBarrier

Figure~\ref{fig:ngasket-first-level} illustrates the first-contact
construction.  For a fixed polygonal \(N\)-gasket, put
\[
  a_{j,m}=(\Psi_0^{(N)})^{-m}(q_j),
  \qquad j\in\mathbb Z/N\mathbb Z,\quad m\ge0.
\]
A \emph{crossing} of \(F_m^{(N)}\) is a self-avoiding walk between two
prescribed distinct vertices of \(\partial F_m^{(N)}\).

If \(N\notin\{3,5\}\), define
\begin{equation}
  \begin{aligned}
  X_m^{(N)}(\beta)
    &=
    \sum_{\substack{w:a_{0,m}\to a_{s_N,m}\\
                    w\text{ self-avoiding in }F_m^{(N)}}}
      e^{-\beta L(w)},\\
  Y_m^{(N)}(\beta)
    &=
    \sum_{\substack{w:a_{-s_N,m}\to a_{s_N,m}\\
                    w\text{ self-avoiding in }F_m^{(N)}}}
      e^{-\beta L(w)}.
  \end{aligned}
  \label{eq:ngasket-crossings}
\end{equation}
Thus \(X_m^{(N)}\) joins \(a_{0,m}\) to \(a_{s_N,m}\), whereas
\(Y_m^{(N)}\) joins \(a_{-s_N,m}\) to \(a_{s_N,m}\).

For \(N=3\), let \(x_m(\beta)\) and \(y_m(\beta)\) be the partition
functions of crossings between two fixed boundary vertices which,
respectively, avoid and visit the third boundary vertex.  For \(N=5\),
let \(\phi_{1,m}(\beta)\) and \(\phi_{2,m}(\beta)\) count crossings from
\(a_{0,m}\) to \(a_{2,m}\) which, respectively, avoid and visit
\(a_{-2,m}\); let \(\theta_{1,m}(\beta)\) and
\(\theta_{2,m}(\beta)\) count crossings from \(a_{-2,m}\) to
\(a_{2,m}\) which, respectively, avoid and visit \(a_{0,m}\).
In each case the weight of a walk is \(e^{-\beta L(w)}\).  Define the
\emph{crossing vector}
\[
  \mathbf C_m^{(N)}(\beta)
  =
  \begin{cases}
    (x_m(\beta),y_m(\beta)),&N=3,\\
    (\phi_{1,m}(\beta),\phi_{2,m}(\beta),
     \theta_{1,m}(\beta),\theta_{2,m}(\beta)),&N=5,\\
    (X_m^{(N)}(\beta),Y_m^{(N)}(\beta)),&N\notin\{3,5\},
  \end{cases}
\]
and its \emph{crossing critical inverse temperature} by
\begin{equation}
  \beta_{\mathrm c,N}^{\mathrm{cr}}
  =
  \inf\left\{
    \beta\ge0:
    \sup_{m\ge0}\|\mathbf C_m^{(N)}(\beta)\|_\infty<\infty
  \right\}.
  \label{eq:ngasket-critical-beta}
\end{equation}
The infimum is understood in \([0,\infty]\).
Let \(\mu_N\) and \(\beta_{\mathrm c,N}\) denote, respectively, the
connective constant and the critical inverse temperature of
\(F_\infty^{(N)}\), with the latter defined by
Definition~\ref{def:critical-point}.

Throughout this section, fix \(N\ge3\) with \(4\nmid N\).  We use the
polygonal \(N\)-gasket from Definition~\ref{def:regular-ngasket} and
omit the superscript \((N)\) from \(F_m^{(N)}\) and \(F_\infty^{(N)}\),
and also from \(X_m^{(N)}\) and \(Y_m^{(N)}\).

\subsection{\texorpdfstring{Geometry and crossing
recursions}{Geometry and crossing recursions}}

The following proposition describes the contact pattern that determines the
crossing recursions.

For a first-level fractal cell \(\Psi_j^{(N)}(K)\), the fixed point
\(q_j\) of \(\Psi_j^{(N)}\) is called its \emph{outer vertex}.  More
generally, when a parent fractal cell is decomposed into its immediate
fractal subcells,
the corresponding image of \(q_j\) is called the outer vertex of the
\(j\)-th subcell.

\begin{proposition}
\label{prop:ngasket-geometry}
The following statements hold.
\begin{enumerate}[label=\textup{(\roman*)}]
\item The polygonal \(N\)-gasket satisfies
  \textup{(NF1)}--\textup{(NF4)} and the one-point intersection condition
  \eqref{eq:one-point}.
\item The incidence graph of its first-level fractal cells is the cycle
  graph \(C_N\) on \(\mathbb Z/N\mathbb Z\), with an edge between \(j\) and
  \(j+1\) for each \(j\) modulo \(N\).
\item The outer vertex of a first-level fractal cell is its unique vertex in
  \(V_0^{(N)}\).  Fix such a fractal cell and identify its boundary with
  \(V_0^{(N)}\) by applying the inverse cell map followed by a dihedral
  relabeling that sends its outer vertex to \(q_0\).  Under this
  identification, its contact points with the previous and next
  first-level fractal cells in the cyclic order correspond to
  \(q_{-s_N}\) and \(q_{s_N}\), respectively.
\end{enumerate}
\end{proposition}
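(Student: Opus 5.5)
The plan is to do the planar geometry explicitly first and then read the axioms off it.

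\emph{Hull geometry.} Put \(P=\operatorname{conv}V_0^{(N)}\) and \(P_j=\Psi_j^{(N)}(P)\), a regular \(N\)-gon of circumradius \(\rho_N\) with vertex \(q_j\). Since \(K\subset P\), we have \(\Psi_j(K)\subset P_j\). By rotational symmetry it suffices to compare \(P_0\) with \(P_1\) and \(P_0\) with \(P_j\) for \(j\ne0,\pm1\). The vertices of \(P_0\) are \(q_0+\rho_N(q_i-q_0)\). Project onto the direction of the chord \(q_1-q_0\). The extreme point of \(P_0\) in this direction is the image of the vertex \(q_i\) of the big polygon farthest along that direction. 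By symmetry of the chord about its perpendicular bisector, this is \(q_{s_N}\) for \(P_0\) and, mirror-symmetrically, \(q_{1-s_N}\) for \(P_1\). Here one uses that \(N\) is not divisible by \(4\), so this vertex is unique and not an edge. The projected gap between \(P_0\) and \(P_1\) is then \(|q_1-q_0|-2\rho_N\cdot(\text{projection of }q_{s_N}-q_0)\). The formula for \(\rho_N\), after the identity \(|q_1-q_0|=2\sin(\pi/N)\) and the projection \(2\sin(\pi/N)\cdot\tfrac12(1+\sin((2s_N-1)\pi/N)/\sin(\pi/N))\), is exactly the condition that this gap vanishes. I will check this by a short trigonometric computation. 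Moreover, the extreme points \(\Psi_0(q_{s_N})\) and \(\Psi_1(q_{1-s_N})\) project to the same value and lie on the same line perpendicular to the chord, by reflection symmetry through the bisector of \(q_0q_1\). Hence they coincide, and \(P_0\cap P_1\) is this single point \(\zeta_0\). For non-adjacent \(j\), I would show \(P_0\cap P_j=\varnothing\) by a strict separation. The hulls lie in the disjoint angular sectors around the centre determined by the rays, or equivalently one compares the distance \(|q_j-q_0|\ge|q_2-q_0|\) with \(2\rho_N\). This is the step I expect to be the main obstacle: the inequality \(2\rho_N<|q_2-q_0|\), with the right margin uniformly in \(N\), needs a careful but elementary estimate on \(\sin((2s_N-1)\pi/N)\), which is close to \(1\).

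\emph{The axioms.} (NF1) holds with \(\mathcal U=\operatorname{int}P\): the \(P_j\) lie in \(P\), and their interiors are disjoint by the previous step. For (NF2), \(\Psi_j(K)\ni\zeta\) and \(\Psi_{j+1}(K)\ni\zeta\) must be checked, i.e.\ that \(\zeta_0\) is a vertex image lying in both cells. This holds because \(\zeta_0=\Psi_0(q_{s_N})=\Psi_1(q_{1-s_N})\). The cyclic chain of cells is therefore connected, and the standard iterative argument gives connectedness of \(K\). The same identity shows that each \(q_j\) is essential, and that points of \(V_0\) are the only essential fixed points, so \(V_0=V_0^{(N)}\). (NF4) and the one-point condition follow since \(\Psi_i(K)\cap\Psi_j(K)\subset P_i\cap P_j\), which is either empty or \(\{\zeta\}\), with \(\zeta\) lying in both boundary sets. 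The iterated version follows by applying the maps. For (NF3), the reflections \(\mathscr R_{x,y}\) are elements of the dihedral group \(D_N\). This group permutes the maps by \(g\Psi_jg^{-1}=\Psi_{g(j)}\), so it permutes the level-\(r\) cells and their boundaries.

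\emph{Statements (ii) and (iii).} Statement (ii) is the intersection pattern just found. For (iii), \(P_j\cap V_0=\{q_j\}\), since the other vertices of \(P\) lie outside \(P_j\). Pulling back by \(\Psi_0^{-1}\), the contact points with \(P_{\pm1}\) are \(q_{\pm s_N}\), as computed above. For general \(j\), conjugate by the rotation sending \(j\) to \(0\). This rotation is exactly the dihedral relabeling, and it preserves the cyclic order, so "previous" corresponds to \(q_{-s_N}\) and "next" to \(q_{s_N}\).
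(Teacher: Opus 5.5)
Your route is the paper's: the first-contact identity \(\Psi_0(q_{s_N})=\Psi_1(q_{1-s_N})\), disjointness of non-adjacent hulls, and then (NF1)--(NF4), (ii) and (iii) read off from the hull geometry. You are more explicit than the paper. It cites Tyson--Wu for the contact calculation and for the disjointness of non-adjacent hulls, and it does not check that the essential fixed points are exactly \(V_0^{(N)}\).

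The one thing to correct is the step you flag as the main obstacle. The inequality \(2\rho_N<|q_2-q_0|\) does not give \(P_0\cap P_j=\varnothing\), because each hull has diameter up to \(2\rho_N\). With circumscribed discs, the comparison would have to be \((1-\rho_N)|q_j-q_0|>2\rho_N\), using the distance between the centres \((1-\rho_N)q_0\) and \((1-\rho_N)q_j\). The sector argument you mention is not equivalent to either comparison.

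In fact no estimate is needed, and the sector argument already follows from your own computation. The zero-gap calculation shows that \(P_0\) lies in the closed half-plane bounded by the perpendicular bisector of \(q_0q_1\), on the side of \(q_0\). Conjugating by \(z\mapsto\bar z\) gives the same statement for the bisector of \(q_{-1}q_0\). Both lines pass through the origin, so
\[
  P_0\subset\{0\}\cup\{z:|\arg z|\le\pi/N\},
\]
and \(P_j\) lies in the rotation of this sector by \(2\pi j/N\). For \(j\notin\{0,\pm1\}\) the two sectors meet only at \(0\). Finally, \(0\notin P_0\) because \(\rho_N<1/2\) for \(N\ge5\): indeed \(\sin((2s_N-1)\pi/N)\ge\sin(3\pi/5)>\sin(\pi/N)\), so the origin is at distance \(1-\rho_N>\rho_N\) from the centre of \(P_0\).

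This settles the non-adjacent case uniformly in \(N\). The rest of your plan (NF1, NF4, the one-point condition and (ii)) then goes through as you wrote it.
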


\begin{proof}
We first prove \textup{(ii)} and \textup{(iii)}.  The standard
first-contact calculation for two consecutive homothetic
regular \(N\)-gons gives
\[
  \Psi_0^{(N)}(q_{s_N})
  =
  \Psi_1^{(N)}(q_{1-s_N}).
\]
For \(4\nmid N\), this is the unique contact of the two fractal cells.
Rotating the identity by one step in the opposite direction also gives
\[
  \Psi_{-1}^{(N)}(q_{s_N-1})
  =
  \Psi_0^{(N)}(q_{-s_N}).
\]
Thus, under \((\Psi_0^{(N)})^{-1}\), the contact points of the
first-level fractal cell \(\Psi_0^{(N)}(K)\) with
\(\Psi_{-1}^{(N)}(K)\) and \(\Psi_1^{(N)}(K)\) correspond to
\(q_{-s_N}\) and \(q_{s_N}\), respectively.  A dihedral symmetry gives
the same normalized description for every first-level fractal cell.

Rotating the configuration gives one contact for every consecutive pair,
while nonconsecutive polygonal hulls are disjoint at the first-contact
ratio~\cite[Section~7]{TysonWu2006}.  Hence the incidence graph is
\(C_N\).  Moreover, each first-level polygonal hull contains only its
fixed point from \(V_0^{(N)}\), so this fixed point is the unique outer
vertex of the corresponding fractal cell.  This proves \textup{(ii)} and
\textup{(iii)}, as well as the first-level form of \textup{(NF4)} and
the one-point intersection condition.

It remains to prove \textup{(i)}.  The interiors of the first-level
polygonal hulls give \textup{(NF1)}, their cyclic contact pattern gives
\textup{(NF2)}, and the dihedral symmetries of the regular polygon give
\textup{(NF3)}.  The preceding contact description gives
\textup{(NF4)} at the first level, and iteration gives the one-point
intersection condition at every level.
\end{proof}

The symbols \(\cS\), \(\cC_m(\sigma)\), and \(\bZ_m\) refer to the
boundary states, boundary forests, and boundary-state partition vector
of this polygonal \(N\)-gasket.
The resulting recursions are as follows.  Every quantity in
the proposition is evaluated at the same fixed \(\beta\ge0\).

\begin{proposition}
\label{prop:ngasket-recursions}
If \(N=4k_N+\ell_N\notin\{3,5\}\), where
\(\ell_N\in\{1,2,3\}\), then the functions in
\eqref{eq:ngasket-crossings} satisfy
\begin{equation}
  \begin{aligned}
  X_0
    &=e^{-(k_N+1)\beta}
      +e^{-(3k_N+\ell_N-1)\beta},&
  Y_0
    &=e^{-2(k_N+1)\beta}
      +e^{-(2k_N+\ell_N-2)\beta},\\
  X_{m+1}
    &=X_m^2
      \bigl(Y_m^{k_N}+Y_m^{3k_N+\ell_N-2}\bigr),&
  Y_{m+1}
    &=X_m^2
      \bigl(Y_m^{2k_N+\ell_N-3}+Y_m^{2k_N+1}\bigr).
  \end{aligned}
  \label{eq:ngasket-recursion}
\end{equation}

For \(N=3\),
\begin{equation}
  \begin{aligned}
  x_0&=e^{-\beta},&
  y_0&=e^{-2\beta},\\
  x_{m+1}
    &=(x_m+y_m)^2+x_m^2(x_m+2y_m),&
  y_{m+1}
    &=x_my_m(x_m+2y_m).
  \end{aligned}
  \label{eq:sg-crossing-recursion}
\end{equation}

For \(N=5\), put
\[
  \bar\phi_m=\phi_{1,m}+\phi_{2,m},
  \qquad
  \bar\theta_m=\theta_{1,m}+\theta_{2,m}.
\]
Then
\begin{equation}
  \begin{aligned}
  (\phi_{1,0},\phi_{2,0},\theta_{1,0},\theta_{2,0})
    &=(e^{-2\beta},e^{-3\beta},e^{-\beta},e^{-4\beta}),\\
  \phi_{1,m+1}
    &=\bar\phi_m^2\bar\theta_m(1+\theta_{1,m}),\\
  \phi_{2,m+1}
    &=\bar\phi_m^2\bar\theta_m\theta_{2,m},\\
  \theta_{1,m+1}
    &=\bar\phi_m^2+
      \bar\theta_m^2\theta_{1,m}\phi_{1,m}
      (\phi_{1,m}+2\phi_{2,m}),\\
  \theta_{2,m+1}
    &=\bar\theta_m^2\theta_{2,m}\phi_{1,m}
      (\phi_{1,m}+2\phi_{2,m}).
  \end{aligned}
  \label{eq:pentagasket-recursion}
\end{equation}
\end{proposition}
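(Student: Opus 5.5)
The plan is to obtain every recursion in Proposition~\ref{prop:ngasket-recursions} from the single structural fact in Proposition~\ref{prop:ngasket-geometry}: the first-level cells of \(F_{m+1}\) form a cycle \(C_N\), with one contact vertex between consecutive cells, and inside each cell the two contacts sit at the normalized positions \(q_{\pm s_N}\) relative to its outer vertex \(q_0\).

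\textbf{Decomposing a crossing.} Take a crossing of \(F_{m+1}\) between two outer vertices \(a_{i,m+1},a_{j,m+1}\). By Proposition~\ref{prop:cell-hierarchy} it splits uniquely into maximal pieces, each lying in one \(m\)-subcell. Since the incidence graph is a cycle and the walk is self-avoiding, it may use each contact vertex at most once. It therefore travels from the cell of \(a_i\) to the cell of \(a_j\) along one of the two arcs of \(C_N\), passing each contact vertex of that arc once. There is one exception: it may enter a cell of the other arc if it leaves that cell through the same contact after visiting an extra outer vertex. That can happen only for a terminal cell, and for small \(N\) the relevant contacts coincide with boundary points. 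This exception is exactly what produces the extra \(N=3,5\) terms.

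\textbf{Case \(N\notin\{3,5\}\).} I would first check that the outer vertex of an intermediate cell is never visited. A detour to it would need two disjoint sub-crossings ending at contacts, which the cyclic pattern forbids. Consequently each intermediate cell is crossed contact-to-contact, from normalized \(q_{-s_N}\) to \(q_{s_N}\), contributing a factor \(Y_m\). Each end cell is crossed from its outer vertex to a contact, from \(q_0\) to \(q_{\pm s_N}\), contributing \(X_m\) by the dihedral symmetry of Lemma~\ref{lem:boundary-rooting}. For \(X_{m+1}\) the targets are \(a_0\) and \(a_{s_N}\), separated by \(s_N\) steps one way and \(N-s_N\) the other. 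The two arcs then contain \(s_N-1=k_N\) and \(N-s_N-1=3k_N+\ell_N-2\) intermediate cells. Similarly \(Y_{m+1}\) joins \(a_{-s_N}\) to \(a_{s_N}\), at distances \(2s_N\) and \(N-2s_N\), giving exponents \(2k_N+1\) and \(2k_N+\ell_N-3\). Weights multiply because edge sets of distinct subcells are disjoint, so the product structure follows as in Proposition~\ref{prop:boundary-renormalization}.

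\textbf{Initial values.} These come from the cycle \(F_0\): the two arcs from \(q_0\) to \(q_{s_N}\) have lengths \(s_N\) and \(N-s_N\), and those from \(q_{-s_N}\) to \(q_{s_N}\) have lengths \(2s_N\) and \(N-2s_N\). Note that \(N-2s_N=2k_N+\ell_N-2\ge1\) requires \(N\notin\{3,5\}\), and this is why those cases are separate.

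\textbf{Cases \(N=3,5\) (main obstacle).} Here \(N-2s_N\le 0\), so the two contacts of a cell coincide with, or are adjacent in normalized form to, other vertices, and crossings can revisit outer or contact vertices along different routes. For these I would enumerate the admissible tuples of child states directly, tracking whether the extra boundary vertex is visited.

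For \(N=3\), enumerating crossings of the three-cell triangle with \(x\)/\(y\) child types reproduces the classical Hattori–Hattori–Kusuoka recursion, which I would cite and verify.

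For \(N=5\), the contacts lie at normalized \(q_{\pm2}\). A crossing of a child from \(q_0\) to \(q_2\) may or may not pass through \(q_{-2}\), which is the contact with the neighbouring cell. That passage blocks the neighbour, giving the \(\phi_1\) versus \(\phi_2\) distinction, and analogously \(\theta_1,\theta_2\). I would do a careful case analysis of the short-arc route versus the long-arc route with one terminal detour; this bookkeeping is where errors are most likely. The factors \((1+\theta_{1,m})\) and \((\phi_{1,m}+2\phi_{2,m})\) should correspond to the optional detour and to orientation multiplicities. The initial vector is read off the pentagon \(F_0\).
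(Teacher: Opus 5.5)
\textbf{Verdict: genuine gap.} Your decomposition into simple arcs and your count for \(N\notin\{3,5\}\) match the paper: \(k_N,\ 3k_N+\ell_N-2\) interior cells for \(X\), and \(2k_N+1,\ 2k_N+\ell_N-3\) for \(Y\). But the \(N=3\) and \(N=5\) recursions are never derived, and the mechanism you propose for them cannot occur.

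\textbf{Problems in the general case.} Two steps there are off.
\begin{enumerate}
\item The claim that the outer vertex of an intermediate cell is never visited is false. That vertex lies in no other subcell, so the interior piece may pass through it. This is exactly why the factor is the unrestricted \(Y_m\); already \(Y_0\) contains the length-\(2s_N\) arc through \(q_0\).
\item Disjoint edge sets only make the weights multiply. Exactness of the product also needs every tuple of child crossings to concatenate to a self-avoiding walk. The point to check is that an endpoint piece may pass through the unused contact of its cell; for example, the length-\((3k_N+\ell_N-1)\) arc in \(X_0\) passes through \(q_{-s_N}\). For \(N\notin\{3,5\}\) these two optional contacts are distinct from each other and from the itinerary contacts. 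This holds because the two endpoint cells are nonadjacent along both arcs.
\end{enumerate}
Relatedly, for \(N=3,5\) one has \(N-2s_N=1\), not \(\le 0\). The condition that actually separates the cases is \(N-2s_N\ge 2\).

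\textbf{The gap for \(N=3\) and \(N=5\).} You only cite or plan these cases, and your proposed ``exception'' is impossible: a self-avoiding walk cannot enter a cell and leave through the same contact. As in the paper, once a crossing enters and leaves an internal subcell, both its contacts are spent. An endpoint subcell cannot be re-entered either. So the itinerary is always a simple arc of \(C_N\), with no exceptions.

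\textbf{Where the correction terms actually come from.} On one arc the two endpoint cells are adjacent. They share a contact, and at most one endpoint piece may visit it.
\begin{enumerate}
\item For \(N=3\), on the itinerary \((\Delta_0,\Delta_2,\Delta_1)\), this gives \((x_m+y_m)^2-y_m^2=x_m(x_m+2y_m)\). The middle piece is \(x_m\) or \(y_m\) according as it avoids or visits \(a_{2,m+1}\).
\item For \(N=5\), on the long itinerary \((\Delta_3,\Delta_4,\Delta_0,\Delta_1,\Delta_2)\) of a \(\theta\)-crossing, it gives \(\bar\phi_m^2-\phi_{2,m}^2=\phi_{1,m}(\phi_{1,m}+2\phi_{2,m})\).
\end{enumerate}

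\textbf{Your reading of the factors is wrong.} The factor \(\phi_{1,m}+2\phi_{2,m}\) is this inclusion--exclusion: exactly one of the two endpoint pieces uses the shared contact. It is not an orientation multiplicity. The factor \(1+\theta_{1,m}\) in \(\phi_{1,m+1}\) is not an optional detour. The \(1\) is the short itinerary \((\Delta_0,\Delta_1,\Delta_2)\). The \(\theta_{1,m}\) is the long itinerary \((\Delta_0,\Delta_4,\Delta_3,\Delta_2)\) whose piece in \(\Delta_3\) avoids \(a_{3,m+1}=a_{-2,m+1}\).

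Without this analysis, two of the three recursions in the statement remain unproved.
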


\begin{proof}
At level zero, \(F_0=C_N\), so the two paths between any prescribed
pair of vertices are the two polygonal arcs.  Their lengths give all
the initial conditions in
\eqref{eq:ngasket-recursion}--\eqref{eq:pentagasket-recursion}.

Inside an \((m+1)\)-cell, label the \(m\)-subcells
\(\Delta_0,\ldots,\Delta_{N-1}\) so that \(\Delta_j\) contains the
outer vertex \(a_{j,m+1}\), and put
\[
  \zeta_j=\Delta_j\cap\Delta_{j+1}.
\]
By Proposition~\ref{prop:ngasket-geometry}(i)--(ii) and self-similarity,
\(\zeta_j\) is the unique contact between two consecutive subcells, and
each subcell meets the rest of the parent only at its two contacts
\(\zeta_{j-1}\) and \(\zeta_j\).
By
Proposition~\ref{prop:cell-hierarchy}, each edge belongs to a unique
\(m\)-subcell.
For a crossing, its \emph{subcell itinerary} is the sequence of
\(m\)-subcells containing its successive edges, with consecutive
repetitions deleted.
Once a crossing enters and leaves an internal subcell, both contacts
have been used, so self-avoidance prevents a later return.  An endpoint
subcell cannot be re-entered either, since the other endpoint lies in a
different subcell.  Hence the subcell itinerary is one of the two simple
arcs of \(C_N\).  Dihedral symmetry identifies every local orientation of
a piece with the corresponding partition function.

We first prove \eqref{eq:ngasket-recursion}.  An \(X\)-crossing has two
endpoint pieces of type \(X_m\).  Its short arc has
\(s_N-1=k_N\) interior pieces of type \(Y_m\), and its long arc has
\[
  N-s_N-1=3k_N+\ell_N-2
\]
such pieces.  This gives the recursion for \(X_{m+1}\).
For a \(Y\)-crossing, the two arcs have
\[
  2s_N-1=2k_N+1
  \quad\text{and}\quad
  N-2s_N-1=2k_N+\ell_N-3
\]
interior cells.  This gives the recursion for \(Y_{m+1}\).
Since \(s_N=\lfloor N/4\rfloor+1\), the endpoint cells of an
\(X\)-crossing are adjacent only when \(N=3\).  For a \(Y\)-crossing,
they are adjacent precisely when \(N-2s_N=1\), equivalently when
\(N\in\{3,5\}\).  Thus, in the present case \(N\notin\{3,5\}\), neither
pair of endpoint cells is adjacent, and multiplication of the child
partition functions is exact.

For \(N=3\), consider a crossing from \(a_{0,m+1}\) to
\(a_{1,m+1}\).  The direct itinerary
\((\Delta_0,\Delta_1)\) avoids \(a_{2,m+1}\), and each of its two pieces
may either avoid or visit its contact with \(\Delta_2\).  The two
contacts are distinct, so the total contribution is
\((x_m+y_m)^2\).

The long itinerary is
\((\Delta_0,\Delta_2,\Delta_1)\).  Its two endpoint subcells also meet
at one common contact.  Either endpoint piece may visit that contact, but
not both, since the resulting walk would otherwise repeat the vertex.
The total weight of the allowed endpoint choices is therefore
\[
  (x_m+y_m)^2-y_m^2=x_m(x_m+2y_m).
\]
The middle piece has type \(x_m\) when the parent crossing avoids
\(a_{2,m+1}\), and type \(y_m\) when it visits that vertex.  The long
itinerary consequently contributes \(x_m^2(x_m+2y_m)\) to
\(x_{m+1}\) and \(x_my_m(x_m+2y_m)\) to \(y_{m+1}\).
This proves \eqref{eq:sg-crossing-recursion}, the two-variable recursion
introduced in~\cite{HHK1990}.

It remains to prove \eqref{eq:pentagasket-recursion}.  Consider first a
crossing from \(a_{0,m+1}\) to \(a_{2,m+1}\).  Its short itinerary
\((\Delta_0,\Delta_1,\Delta_2)\) has two endpoint pieces of total weight
\(\bar\phi_m^2\) and one
interior piece of weight \(\bar\theta_m\).  It avoids
\(a_{3,m+1}\), so its contribution to \(\phi_{1,m+1}\) is
\(\bar\phi_m^2\bar\theta_m\).  The long itinerary is
\((\Delta_0,\Delta_4,\Delta_3,\Delta_2)\).
The piece in \(\Delta_4\) has total weight \(\bar\theta_m\), whereas
the piece in \(\Delta_3\) has weight \(\theta_{1,m}\) or
\(\theta_{2,m}\), according as it
avoids or visits its outer vertex \(a_{3,m+1}\).
Hence the long itinerary contributes
\(\bar\phi_m^2\bar\theta_m\theta_{1,m}\) to
\(\phi_{1,m+1}\) and
\(\bar\phi_m^2\bar\theta_m\theta_{2,m}\) to
\(\phi_{2,m+1}\).

Next, consider a crossing from \(a_{-2,m+1}=a_{3,m+1}\) to
\(a_{2,m+1}\).  Its short itinerary
\((\Delta_3,\Delta_2)\) contributes \(\bar\phi_m^2\) and avoids
\(a_{0,m+1}\), so it contributes \(\bar\phi_m^2\) to
\(\theta_{1,m+1}\).  Its long itinerary is
\((\Delta_3,\Delta_4,\Delta_0,\Delta_1,\Delta_2)\).  The pieces in
\(\Delta_4\) and \(\Delta_1\) contribute
\(\bar\theta_m^2\).  The middle piece in \(\Delta_0\) has type
\(\theta_{1,m}\) or \(\theta_{2,m}\), according as the parent crossing
avoids or visits \(a_{0,m+1}\).  The endpoint cells of the long itinerary
share one common contact.  Both
endpoint pieces may visit it, but not simultaneously.  The total weight
of the allowed endpoint choices is
\[
  \bar\phi_m^2-\phi_{2,m}^2
  =
  \phi_{1,m}(\phi_{1,m}+2\phi_{2,m}).
\]
Multiplication of these factors gives the two
\(\theta\)-recursions and completes the proof of
\eqref{eq:pentagasket-recursion}.
\end{proof}

\begin{proposition}
\label{prop:ngasket-completeness}
For every \(\beta\ge0\),
\[
  \sup_{m\ge0}\|\mathbf C_m^{(N)}(\beta)\|_\infty<\infty
  \quad\Longleftrightarrow\quad
  \sup_{m\ge0}\|\bZ_m(\beta)\|_\infty<\infty.
\]
Consequently,
\(\beta_{\mathrm c,N}^{\mathrm{cr}}=\beta_{\mathrm c,N}\).
\end{proposition}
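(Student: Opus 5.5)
Both directions will go through Remark~\ref{rem:one-edge-states}, which reduces boundedness of \(\bZ_m(\beta)\) to boundedness of the one-edge partition functions \(P_m^{ij}(\beta)\), \(i\ne j\).

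\emph{From \(\bZ\) to \(\mathbf C\).} Each coordinate of \(\mathbf C_m^{(N)}(\beta)\) is a sum over crossings between two fixed boundary vertices, with some condition on visiting a third one. Any such crossing is a boundary forest consisting of one path, and its boundary trace is a single path state in \(\cS\). Since the crossing is self-avoiding, its trace visits each vertex of \(V_0\) at most once, so only finitely many states occur. Therefore each crossing coordinate is bounded by a finite sum of coordinates \(Z_m^\sigma(\beta)\), with the number of terms independent of \(m\). This gives \(\|\mathbf C_m\|_\infty\le\#\cS\,\|\bZ_m\|_\infty\).

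\emph{From \(\mathbf C\) to \(\bZ\).} It suffices to bound \(P_m^{ij}(\beta)=Z_m^{e_{ij}}(\beta)\) for every pair \(i\ne j\).
\begin{enumerate}[label=\textup{(\arabic*)}]
\item By dihedral symmetry, \(P_m^{ij}\) depends only on the cyclic distance between \(i\) and \(j\).
\item I will show that every single-edge crossing between two outer vertices \(a_{i,m}\), \(a_{j,m}\) that visits no other boundary vertex has bounded partition function, uniformly in \(m\). For this I decompose at level \(m\) into \(m-1\)-subcells, exactly as in the proof of Proposition~\ref{prop:ngasket-recursions}. The subcell itinerary is one of the two arcs of \(C_N\). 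The two endpoint pieces go from an outer vertex to a contact point, namely \(q_0\) to \(q_{\pm s_N}\) after normalization, so they are of type \(X_{m-1}\) (for \(N=3,5\), the corresponding sums \(x+y\), \(\bar\phi\)). The interior pieces go from contact to contact, so they are of type \(Y_{m-1}\) (respectively \(\bar\theta\)). Dropping the constraints, this gives
\[
  P_m^{ij}\le 2\max(1,\|\mathbf C_{m-1}\|_\infty)^{N},
\]
which is bounded when \(\mathbf C\) is bounded.
\item The case \(m=0\) is finite, and Remark~\ref{rem:one-edge-states} then concludes.
\end{enumerate}
The equality \(\beta_{\mathrm c,N}^{\mathrm{cr}}=\beta_{\mathrm c,N}\) follows, since the two sets of \(\beta\) over which the infima are taken coincide.

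\emph{Main obstacle.} The delicate point is step (2): the endpoint pieces must really be crossings between the normalized vertices \(q_0\) and \(q_{\pm s_N}\), i.e.\ of type \(X\), and the interior pieces between \(q_{-s_N}\) and \(q_{s_N}\), i.e.\ of type \(Y\). For this I will rely on Proposition~\ref{prop:ngasket-geometry}(iii). I will also check that each piece is a full self-avoiding walk between those two vertices. It may visit other boundary vertices of its subcell, but that is allowed, since \(X\) and \(Y\) sum over all self-avoiding walks.
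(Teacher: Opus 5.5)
Your proposal is correct and follows the paper's proof essentially step for step. The forward direction writes each crossing coordinate as a finite sum of boundary-state partition functions. The reverse direction bounds each one-edge function \(P_m^{ij}\) by summing over the two arc itineraries, with outer-to-contact endpoint pieces and contact-to-contact interior pieces, and then invokes Remark~\ref{rem:one-edge-states}.

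The only slip is a harmless constant. For \(N\in\{3,5\}\) the piece weights \(x_{m-1}+y_{m-1}\), \(\bar\phi_{m-1}\), \(\bar\theta_{m-1}\) are sums of two coordinates, so they are bounded by \(2\|\mathbf C_{m-1}\|_\infty\), not by \(\|\mathbf C_{m-1}\|_\infty\). Your displayed bound therefore needs to be enlarged, for example to \(2^{N+1}\max(1,\|\mathbf C_{m-1}\|_\infty)^N\), which does not affect the argument.
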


\begin{proof}
Each partition function defining \(\mathbf C_m^{(N)}(\beta)\) is a finite
sum of partition functions in \(\bZ_m(\beta)\): a crossing is a
one-component boundary forest, and its boundary state records the
additional boundary vertices, if any, which the crossing visits.  Thus
\[
  \sup_{m\ge0}\|\bZ_m(\beta)\|_\infty<\infty
  \quad\Longrightarrow\quad
  \sup_{m\ge0}\|\mathbf C_m^{(N)}(\beta)\|_\infty<\infty.
\]

Conversely, suppose that
\(\|\mathbf C_m^{(N)}(\beta)\|_\infty\le K\) for every \(m\).
Define the two total crossing weights by
\[
  \mathsf O_m
  =
  \begin{cases}
    x_m+y_m,&N=3,\\
    \bar\phi_m,&N=5,\\
    X_m,&N\notin\{3,5\},
  \end{cases}
  \qquad
  \mathsf P_m
  =
  \begin{cases}
    x_m+y_m,&N=3,\\
    \bar\theta_m,&N=5,\\
    Y_m,&N\notin\{3,5\}.
  \end{cases}
\]
Both quantities are at most \(2K\).

For distinct \(i,j\in\mathbb Z/N\mathbb Z\), let
\(P_m^{ij}(\beta)\) be the one-edge partition function from
Remark~\ref{rem:one-edge-states}.  The same contact argument used in the proof of
Proposition~\ref{prop:ngasket-recursions} shows that a path counted by
\(P_m^{ij}(\beta)\) has one of the two simple arcs of \(C_N\) as its
subcell itinerary.  Let \(r\in\{1,\ldots,N-1\}\) be determined by
\(j-i\equiv r\pmod N\).  Discarding possible intersections between the
pieces gives, for \(m\ge1\),
\[
  P_m^{ij}(\beta)
  \le
  \mathsf O_{m-1}^{\,2}
  \left(
    \mathsf P_{m-1}^{\,r-1}
    +\mathsf P_{m-1}^{\,N-r-1}
  \right).
\]
Since \(N\) is fixed, the right-hand side is bounded uniformly in
\(m,i,j\); the finitely many level-zero values are bounded as well.
Remark~\ref{rem:one-edge-states} now gives
\(\sup_m\|\bZ_m(\beta)\|_\infty<\infty\), which proves the reverse
implication.
Taking the infima in
\eqref{eq:ngasket-critical-beta} and \eqref{eq:critical-point} proves
the desired equality of critical inverse temperatures.
\end{proof}

\subsection{\texorpdfstring{Critical values and
connective constants}{Critical values and connective constants}}

\begin{proof}[Proof of Theorem~\ref{thm:ngasket}]
Proposition~\ref{prop:ngasket-geometry}(i) places \(F_\infty^{(N)}\) in
the class of nested fractal graphs considered in
Theorems~\ref{thm:connective} and~\ref{thm:critical-identity}.
The
closed polynomial recursions are given in
Proposition~\ref{prop:ngasket-recursions}.  By
Proposition~\ref{prop:ngasket-completeness} and
Theorem~\ref{thm:critical-identity}(i),
\[
  \beta_{\mathrm c,N}^{\mathrm{cr}}
  =
  \beta_{\mathrm c,N}
  =
  \log\mu_N.
\]

It remains to compute this value when \(N=6\) or \(N=9\).  In these
two cases the initial crossing variables in
\eqref{eq:ngasket-recursion} are equal, and the recursion preserves
their equality.  Writing their common value as \(t_m\), we have
\[
  t_0=e^{-d_N\beta}+e^{-2d_N\beta},
  \qquad
  t_{m+1}=F(t_m),
  \qquad
  F(u)=u^{d_N+1}+u^{2d_N+1}.
\]
The fixed points of \(F\) in \([0,\infty)\) are \(0\) and
\[
  t_*=\tau^{1/d_N},
\]
because, for \(u>0\), \(F(u)=u\) is equivalent to
\(u^{d_N}+u^{2d_N}=1\), whose unique solution is \(t_*\) since
\(\tau+\tau^2=1\).  Moreover,
\[
  F(u)-u=u\bigl(u^{d_N}+u^{2d_N}-1\bigr),
  \qquad
  \operatorname{sgn}\bigl(F(u)-u\bigr)
  =\operatorname{sgn}(u-t_*)
  \quad(u>0),
\]
since \(u\mapsto u^{d_N}+u^{2d_N}\) is strictly increasing.
If \(t_0<t_*\), induction gives
\(0<t_{m+1}<t_m<t_*\); its limit is a fixed point below \(t_*\), hence
\(t_m\to0\).  If \(t_0=t_*\), then \(t_m=t_*\) for every \(m\).  If
\(t_0>t_*\), induction gives \(t_{m+1}>t_m>t_*\); if the sequence were
bounded, its limit would be a fixed point above \(t_*\), which is
impossible.  Hence \(t_m\to\infty\).

Consequently, the crossing orbit is bounded exactly when
\[
  e^{-d_N\beta}+e^{-2d_N\beta}\le\tau^{1/d_N}.
\]
Since the left-hand side decreases continuously from \(2\) to \(0\) as
\(\beta\) runs from \(0\) to \(\infty\),
\eqref{eq:ngasket-critical-beta} shows that
\(\beta_{\mathrm c,N}^{\mathrm{cr}}\) is the unique positive solution of
\[
  e^{-d_N\beta_{\mathrm c,N}^{\mathrm{cr}}}
  +e^{-2d_N\beta_{\mathrm c,N}^{\mathrm{cr}}}
  =
  \tau^{1/d_N}.
\]
Setting \(z=e^{-d_N\beta_{\mathrm c,N}^{\mathrm{cr}}}\) gives
\[
  z+z^2=\tau^{1/d_N},
  \qquad
  z=\frac{\sqrt{1+4\tau^{1/d_N}}-1}{2}.
\]
Since
\(\mu_N=e^{\beta_{\mathrm c,N}^{\mathrm{cr}}}\), this is precisely
\eqref{eq:ngasket-algebraic}.
\end{proof}

\section{Ratio limits}

We prove Theorem~\ref{thm:ratio-limit}(i) as follows.  In
Subsection~\ref{subsec:local-surgery}, Lemma~\ref{lem:cell-surgery} gives
the state-preserving cell surgery, and
Proposition~\ref{prop:canonical-density} gives the positive-density estimate
for canonical cell patterns.  In
Subsection~\ref{subsec:marked-switches}, the marked identities of
Lemma~\ref{lem:switch-identities} convert this abundance, under the
flexibility hypothesis, into the regularity estimates of
Proposition~\ref{prop:h-ratio-regularity}, and
Lemma~\ref{lem:ratio-criterion} then gives ratio convergence.
Theorem~\ref{thm:ratio-limit}(i) is a
finite-ramification version of the local replacement mechanism in ratio
theorems such as Kesten's~\cite{Kesten1963}.
Subsection~\ref{subsec:sg} verifies the hypothesis for the
Sierpi\'nski gasket.
Subsection~\ref{subsec:ngasket-flexibility} determines the smallest
flexibility step for every polygonal \(N\)-gasket, thereby proving
Theorem~\ref{thm:ratio-limit}(ii), and
Subsection~\ref{subsec:vicsek} proves part~(iii).

\begin{definition}[Finite-scale flexibility]
\label{def:boundary-flexibility}
For \(r\ge1\), let
\[
  \cS_r=\{\sigma\in\cS:\cC_r(\sigma)\ne\varnothing\}.
\]
Given an integer \(h\ge1\), the graph is
\emph{\(h\)-flexible at scale \(r\)} if, for every
\(\sigma\in\cS_r\), there are forests
\[
  \Gamma_\sigma^-,\Gamma_\sigma^+\in\cC_r(\sigma)
  \quad\text{such that}\quad
  \#E(\Gamma_\sigma^+)=\#E(\Gamma_\sigma^-)+h.
\]
Thus every realizable way of connecting the boundary of an \(r\)-cell
has two internal realizations whose lengths differ by exactly \(h\).
The graph is \emph{\(h\)-flexible} if it is \(h\)-flexible at some
finite scale; such an \(h\) is called a \emph{flexibility step}.
\end{definition}

\subsection{\texorpdfstring{Local surgery and canonical
patterns}{Local surgery and canonical patterns}}
\label{subsec:local-surgery}

Fix a scale \(r\ge1\).  For each \(r\)-cell \(\Delta\), fix one of the
isometries
\[
  S_\Delta:F_r\longrightarrow\Delta
\]
provided by \eqref{eq:m-cell}.  If \(w\) traverses an edge of \(\Delta\),
let \(\Gamma_\Delta(w)\) be the edge subgraph of \(\Delta\) consisting of
all edges of \(w\) that belong to \(\Delta\).

\begin{lemma}
\label{lem:cell-surgery}
Let \(w\) be a self-avoiding walk, and let \(\Delta\) be an \(r\)-cell
which contains neither endpoint of \(w\).  If
\(\Gamma_\Delta(w)\ne\varnothing\), then
\(S_\Delta^{-1}\Gamma_\Delta(w)\) is a boundary forest in \(F_r\).
Moreover, it may be replaced by any other boundary forest with the same
boundary state.  The replacement yields a self-avoiding walk with the same
endpoints as \(w\).  Such replacements may be performed
simultaneously in any finite collection of distinct \(r\)-cells.
\end{lemma}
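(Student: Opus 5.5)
We prove the three assertions of Lemma~\ref{lem:cell-surgery} in turn. Throughout we use that, by Proposition~\ref{prop:cell-hierarchy}, \(\Delta\) meets every other \(r\)-cell only at boundary vertices. Hence any edge of \(F_\infty\) incident with an interior vertex of \(\Delta\) lies in \(\Delta\).

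\emph{Step 1: the restriction is a boundary forest.} Let \(\Gamma=\Gamma_\Delta(w)\). It is a subgraph of the path \(w\), so it is acyclic with maximum degree at most two. Its components are the maximal subpaths of \(w\) whose edges lie in \(\Delta\), and each has at least one edge. Let \(x\) be a degree-one vertex of \(\Gamma\). Then \(x\) is not an endpoint of \(w\), because \(\Delta\) contains neither endpoint. So \(w\) has degree two at \(x\), and the second edge at \(x\) lies outside \(\Delta\). By the observation above, \(x\) is a boundary vertex of \(\Delta\). Transporting by the isometry \(S_\Delta^{-1}\), which maps \(\partial\Delta\) onto \(\partial F_r\), shows that \(S_\Delta^{-1}\Gamma\) satisfies conditions (i)--(iii) of Definition~\ref{def:boundary-state}.

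\emph{Step 2: replacement in one cell.} Let \(\Gamma'\in\cC_r(\sigma)\) with \(\sigma=\operatorname{tr}_r(S_\Delta^{-1}\Gamma)\). Form \(w'\) by deleting the edges of \(\Gamma\) from \(w\) and adding those of \(S_\Delta\Gamma'\). We must show that the edge set of \(w'\) is a single self-avoiding path with the original endpoints. Write \(\Theta=w\setminus E(\Gamma)\); its edges all lie outside \(\Delta\). By Step~1, \(\Theta\) meets \(\Delta\) only at boundary vertices of \(\Delta\) and at vertices of \(w\) not touching \(\Gamma\).

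The key point is that the boundary state records exactly how \(\Gamma\) interacts with \(\Theta\):
\begin{itemize}
\item the set of boundary vertices used by \(\Gamma\);
\item the degree of \(\Gamma\) at each of those vertices (two for interior trace vertices, one for trace endpoints);
\item the pairing of trace endpoints into components.
\end{itemize}

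Degree check. Replacing \(\Gamma\) by \(S_\Delta\Gamma'\) leaves the degree at every boundary vertex unchanged. Interior vertices of \(\Delta\) touched by \(S_\Delta\Gamma'\) are not vertices of \(\Theta\). Indeed, such a vertex is interior to \(\Delta\), so it could lie on \(\Theta\) only through an edge of \(\Delta\), and \(\Theta\) has none. One subtlety remains: \(w\) itself may pass through interior vertices of \(\Delta\) without using \(\Delta\)-edges. This is impossible, because every edge at an interior vertex lies in \(\Delta\). Consequently \(w'\) has maximum degree two, and its only degree-one vertices are the endpoints of \(w\).

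Acyclicity and connectivity. Contract each component of \(\Gamma\) and of \(S_\Delta\Gamma'\) to an abstract edge between its endpoints; this contracts both to the same trace. The resulting abstract graphs \(\Theta\cup\sigma\) for \(w\) and \(w'\) therefore coincide. That common graph is a path, since it is obtained from \(w\) by suppressing interior vertices. Hence \(w'\) is a single path with the same endpoints, and it is self-avoiding because all its degrees are at most two and it contains no cycle.

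\emph{Step 3: simultaneous replacement.} Distinct \(r\)-cells have disjoint edge sets and interiors, by Proposition~\ref{prop:cell-hierarchy}. We therefore perform the replacements one cell at a time. After each replacement, the restriction of the current walk to each untouched cell is unchanged, as is its boundary state. Each cell still contains neither endpoint. Induction on the number of cells then gives the claim.

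The main obstacle is Step~2, in particular the claim that the trace determines the connectivity of the glued graph. I would handle it by the contraction argument above, which reduces everything to the equality of the abstract graphs \(\Theta\cup\sigma\).
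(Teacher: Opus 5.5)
Your proposal is correct and takes essentially the paper's route. Degree-one vertices of the restriction lie on \(\partial\Delta\) because neither endpoint of \(w\) is in \(\Delta\); the untouched part of \(w\) meets \(\Delta\) only at boundary vertices, whose degrees and pairings are fixed by the boundary state; and disjointness of the edge sets of distinct \(r\)-cells allows simultaneous replacement. Your degree check and contraction argument simply spell out the paper's one-line claim that a same-state replacement reconnects the unchanged pieces in the same order without creating a new intersection.
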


\begin{proof}
Since \(\Delta\) contains neither endpoint of \(w\), every degree-one
vertex of \(\Gamma_\Delta(w)\) lies in \(\partial\Delta\), so its pullback
is a boundary forest.  By Proposition~\ref{prop:cell-hierarchy}, the
unchanged part of \(w\) meets \(\Delta\) only at boundary vertices, whose
degrees and connections are recorded by the boundary state.  A same-state
replacement therefore reconnects the unchanged pieces in the same order
without a new intersection.  Since distinct \(r\)-cells have disjoint
edges and share only boundary vertices, the replacements may be performed
simultaneously.
\end{proof}

The next proposition gives the positive-density estimate for canonical cell
patterns.

\begin{proposition}
\label{prop:canonical-density}
Fix \(r\ge1\).  For each \(\sigma\in\cS_r\), fix a forest
\(\Gamma_\sigma\in\cC_r(\sigma)\).  For a walk \(w\) and an \(r\)-cell
\(\Delta\) that contains neither endpoint of \(w\) and satisfies
\(\Gamma_\Delta(w)\ne\varnothing\), put
\[
  \sigma_\Delta(w)
  =\operatorname{tr}_r\bigl(S_\Delta^{-1}\Gamma_\Delta(w)\bigr).
\]
We call \(\Delta\) \emph{canonical for \(w\)} if
\[
  S_\Delta^{-1}\Gamma_\Delta(w)=\Gamma_{\sigma_\Delta(w)}.
\]
Let \(\mathsf m_r(w)\) be the number of canonical \(r\)-cells of \(w\).
There is a constant \(a>0\) such that
\begin{equation}
  \limsup_{n\to\infty}
  \#\{w\in\cW_{O,n}:\mathsf m_r(w)\le\lfloor an\rfloor\}^{1/n}
  <\mu.
  \label{eq:canonical-density}
\end{equation}
\end{proposition}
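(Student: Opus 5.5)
The plan is a Kesten-type pattern argument: start from a walk with few canonical cells, use Lemma~\ref{lem:cell-surgery} to make a prescribed set of its cells canonical, and count multiplicities.

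\emph{Step 1: many usable cells.} Let \(R=\#E(F_r)\). By Proposition~\ref{prop:cell-hierarchy}, every edge lies in exactly one \(r\)-cell, so a walk \(w\in\cW_{O,n}\) has edges in at least \(n/R\) distinct \(r\)-cells. The number of \(r\)-cells containing a given vertex is bounded by a constant \(D'\), by the volume argument of Proposition~\ref{prop:basic-graph}. Hence at most \(2D'\) cells contain an endpoint of \(w\). So \(w\) has at least \(bn\) usable cells, that is, cells with \(\Gamma_\Delta(w)\ne\varnothing\) and no endpoint, where \(b=1/(2R)\) and \(n\) is large. If \(\mathsf m_r(w)\le an\) with \(a<b/2\), then at least \(bn/2\) of these usable cells are non-canonical.

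\emph{Step 2: surgery map.} Fix \(\delta\in(0,b/2)\), and set \(k=\lfloor\delta n\rfloor\). For each bad walk \(w\), meaning \(\mathsf m_r(w)\le\lfloor an\rfloor\), and each set \(T\) of \(k\) non-canonical usable cells of \(w\), define \(w'=\Phi(w,T)\). This walk is obtained by replacing each \(S_\Delta^{-1}\Gamma_\Delta(w)\), for \(\Delta\in T\), by \(\Gamma_{\sigma_\Delta(w)}\). By Lemma~\ref{lem:cell-surgery}, \(w'\) is a self-avoiding walk from \(O\). Its length \(n'\) satisfies \(|n'-n|\le Rk\le R\delta n\). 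The cells of \(T\) become canonical and still contain no endpoint. Cells outside \(T\) are untouched, because cells are edge-disjoint, so their status is unchanged. Thus the canonical cells of \(w'\) are exactly those of \(w\) together with \(T\), and \(\mathsf m_r(w')\le an+\delta n\).

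\emph{Step 3: counting preimages.} Given \(w'\), the set \(T\) is a \(k\)-subset of the canonical cells of \(w'\). There are at most \(\binom{\lfloor(a+\delta)n\rfloor}{k}\) choices for it. Given \(T\), the walk \(w\) is recovered by choosing the original forest in each cell of \(T\). This gives at most \(K^{k}\) choices, with \(K=\#\bigcup_\sigma\cC_r(\sigma)\). Double counting then yields
\[
  \#\{\text{bad }w\}\binom{\lceil bn/2\rceil}{k}
  \le
  \binom{\lfloor(a+\delta)n\rfloor}{k}K^{k}\sum_{|n'-n|\le R\delta n}c_{n'}.
\]
By Theorem~\ref{thm:connective}, \(c_{n'}\le\mu^{n'}e^{\epsilon n}\) for large \(n\). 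Taking \(a\le\delta\), Stirling gives
\[
  \binom{bn/2}{\delta n}\Big/\binom{2\delta n}{\delta n}
  \ge
  \Bigl(\frac{b}{8e\delta}\Bigr)^{\delta n}e^{o(n)}.
\]
We therefore obtain
\[
  \limsup_{n\to\infty}\#\{\text{bad }w\}^{1/n}
  \le
  \mu\Bigl(\frac{8e\delta K\mu^{R}}{b}\Bigr)^{\delta}e^{\epsilon}.
\]
We first choose \(\delta\) so small that the bracket is \(<1\), then \(a=\delta\), and finally \(\epsilon\) small. This gives \eqref{eq:canonical-density}.

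\emph{Main obstacle.} The delicate step is Step 2. We must ensure that the surgery does not alter which other cells are usable or canonical, and that the canonical set of \(w'\) is exactly the canonical set of \(w\) together with \(T\). Otherwise the inversion bound breaks. This relies on edge-disjointness of \(r\)-cells (Proposition~\ref{prop:cell-hierarchy}). It also uses the fact that the endpoints of \(w'\) coincide with those of \(w\), so endpoint cells are unchanged. The length shift is handled by the \(\mu^{R\delta n}\) factor, which the binomial gain beats once \(\delta\) is small.
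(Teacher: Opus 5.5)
Your proposal is correct and is essentially the paper's argument: the same state-preserving surgery that makes \(k=\lfloor\delta n\rfloor\) noncanonical endpoint-free cells canonical, the same double count of pairs \((w,T)\) against walks whose lengths lie within \(k\,\#E(F_r)\) of \(n\), and the same observation that the \(\delta\log(1/\delta)\) gain from the left-hand binomial coefficient beats the \(O(\delta)\) losses. The differences are cosmetic. You take \(a=\delta\) with the elementary bounds \(\binom{N}{k}\ge(N/k)^k\) and \(\binom{2k}{k}\le4^k\), where the paper takes \(a=\delta^2\) and uses the entropy function \(\mathcal H\). You also bound the number of \(r\)-cells through a vertex by a volume argument, where the paper uses the simpler degree bound \(D\).
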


\begin{proof}
Put \(e_r=\#E(F_r)\), and let \(J_r\) be the number of nonempty edge
subgraphs of \(F_r\); it is enough to use the bound \(J_r\le2^{e_r}\).
Every edge of a walk belongs to one \(r\)-cell by
Proposition~\ref{prop:cell-hierarchy}, and one \(r\)-cell contains at
most \(e_r\) of its edges.  Hence an \(n\)-step walk traverses edges in
at least \(\lceil n/e_r\rceil\) distinct \(r\)-cells.

A vertex belongs to at most \(D\) distinct \(r\)-cells.  Indeed, if it
belongs to more than one, it is a boundary vertex of each of them, and
each cell contributes a different incident edge; the degree is at most
\(D\).  After discarding the cells which contain one of the two endpoints
of the walk, at least
\[
  \left\lceil\frac{n}{e_r}\right\rceil-2D
\]
cells remain.  Thus, with
\(\varpi_r=(2e_r)^{-1}\), every sufficiently long walk has at least
\(\lceil\varpi_r n\rceil\) endpoint-free cells with a nonempty restriction.

Let \(\mathcal B_n(a)\) denote the cardinality of the set in
\eqref{eq:canonical-density}.  Choose \(a,\delta>0\) with
\(a+\delta<\varpi_r\), and put \(k=\lfloor\delta n\rfloor\).  From
each walk counted by \(\mathcal B_n(a)\),
choose \(k\) of its endpoint-free cells whose restrictions are not
canonical.  There are at least
\[
  \binom{\lfloor(\varpi_r-a)n\rfloor}{k}
\]
choices.  In each chosen cell replace the restriction by the canonical
forest having the same boundary state.  By Lemma~\ref{lem:cell-surgery},
all \(k\) replacements can be made simultaneously and the output is a
self-avoiding walk with the same endpoints.  Each replacement changes the
length by at most \(e_r\), so the output length lies between
\(n-e_rk\) and \(n+e_rk\).

The chosen cells were noncanonical before replacement, and no unchosen
restriction is changed.  Hence the output has exactly \(k\) more canonical
cells than the input, and therefore at most \(k+\lfloor an\rfloor\)
canonical cells.  Given an output, the modified cells must be chosen among
these canonical cells; there are at most
\[
  \binom{k+\lfloor an\rfloor}{k}
\]
ways.  In each selected cell there are at most \(J_r\) possible original
restrictions.  Keeping choices which do not reconstruct a valid walk
only enlarges the bound.  We have proved
\begin{equation}
  \mathcal B_n(a)\binom{\lfloor(\varpi_r-a)n\rfloor}{k}
  \le
  J_r^k\binom{k+\lfloor an\rfloor}{k}
  \sum_{\substack{j\ge0\\|j-n|\le e_rk}}c_j.
  \label{eq:cell-surgery-count}
\end{equation}

Let \(\mathcal H(t)=-t\log t-(1-t)\log(1-t)\), with
\(\mathcal H(0)=\mathcal H(1)=0\).  For every \(\lambda>\mu\),
Theorem~\ref{thm:connective} gives a constant \(C_\lambda<\infty\) such that
\[
  c_j\le C_\lambda\lambda^j,\qquad j\ge0.
\]
Apply
\(\binom{q}{p}\le\exp\bigl(q\mathcal H(p/q)\bigr)\)
to the binomial coefficient
\[
  \binom{k+\lfloor an\rfloor}{k}
\]
on the right-hand side of \eqref{eq:cell-surgery-count}.  Apply the
Stirling lower bound
\(\binom{q}{p}\ge\exp\bigl(q\mathcal H(p/q)-o(q)\bigr)\)
to the binomial coefficient
\[
  \binom{\lfloor(\varpi_r-a)n\rfloor}{k}
\]
on the left-hand side of \eqref{eq:cell-surgery-count}.
Using \(c_j\le C_\lambda\lambda^j\) and
\(\sum_{|j-n|\le e_rk}c_j\le C'_{\lambda,r}\lambda^{n+e_rk}\),
divide \eqref{eq:cell-surgery-count} by its left-hand binomial
coefficient and take \(n\)-th roots.  Since
\[
  \frac{k}{n}\longrightarrow\delta,
  \qquad
  \frac{k+\lfloor an\rfloor}{n}\longrightarrow\delta+a,
  \qquad
  \frac{\lfloor(\varpi_r-a)n\rfloor}{n}
  \longrightarrow\varpi_r-a,
\]
the factor \(J_r^k\) contributes \(\delta\log J_r\) to the logarithm
of the resulting upper bound.  The two binomial coefficients contribute,
respectively,
\[
  (\delta+a)\mathcal H\left(\frac{\delta}{\delta+a}\right)
  \quad\text{and}\quad
  -(\varpi_r-a)\mathcal H
     \left(\frac{\delta}{\varpi_r-a}\right),
\]
while the sum over \(j\) contributes at most
\((1+e_r\delta)\log\lambda\).  It follows that
\[
\begin{aligned}
  \limsup_{n\to\infty}\mathcal B_n(a)^{1/n}
 \le \lambda^{1+e_r\delta}\exp\bigg\{&
   \delta\log J_r
  +(\delta+a)\mathcal H\left(\frac{\delta}{\delta+a}\right)
  -(\varpi_r-a)\mathcal H\left(\frac{\delta}{\varpi_r-a}\right)
 \bigg\}.
\end{aligned}
\]

Set \(a=\delta^2\).  Then
\[
  (\delta+\delta^2)
  \mathcal H\left(\frac{\delta}{\delta+\delta^2}\right)
  =
  O\left(\delta^2\log(1/\delta)\right).
\]
Moreover, \(\delta\log J_r=O(\delta)\), whereas
\[
  (\varpi_r-a)\mathcal H
  \left(\frac{\delta}{\varpi_r-a}\right)
  =\delta\log(1/\delta)+O(\delta).
\]
Thus the expression in braces is
\(-\delta\log(1/\delta)+O(\delta)\).  Since
\(e_r\delta\log\mu=O(\delta)\), for all sufficiently small \(\delta\)
the expression in braces is smaller than \(-e_r\delta\log\mu\).
Fix such a \(\delta\).  The continuous expression on the right of the
preceding estimate has value strictly below \(\mu\) at \(\lambda=\mu\).
Hence it remains below \(\mu\) for some \(\lambda>\mu\) sufficiently
close to \(\mu\), for which the estimate is valid.
This proves \eqref{eq:canonical-density}.
\end{proof}

\subsection{\texorpdfstring{Ratio
estimates}{Ratio estimates}}
\label{subsec:marked-switches}

Suppose from now on that the graph is \(h\)-flexible at scale \(r\).
For each \(\sigma\in\cS_r\), choose and fix forests
\(\Gamma_\sigma^-,\Gamma_\sigma^+\in\cC_r(\sigma)\), as in
Definition~\ref{def:boundary-flexibility}, with
\(\Gamma_\sigma^+\) having exactly \(h\) more edges than
\(\Gamma_\sigma^-\).  For a walk \(w\) and an
endpoint-free \(r\)-cell \(\Delta\) with
\(\Gamma_\Delta(w)\ne\varnothing\), write
\[
  \sigma_\Delta(w)
  =\operatorname{tr}_r\bigl(S_\Delta^{-1}\Gamma_\Delta(w)\bigr).
\]
Let \(m_-(w)\) and \(m_+(w)\) be the numbers of such cells for which,
respectively,
\[
  S_\Delta^{-1}\Gamma_\Delta(w)
  =\Gamma_{\sigma_\Delta(w)}^-
  \quad\text{or}\quad
  S_\Delta^{-1}\Gamma_\Delta(w)
  =\Gamma_{\sigma_\Delta(w)}^+.
\]
Cells satisfying the first and second conditions are called
\emph{minus occurrences} and \emph{plus occurrences}, respectively.
Replacing a minus occurrence by
the corresponding plus occurrence is a \emph{switch}: it preserves the
boundary state and increases the walk length by exactly \(h\).
Apply Proposition~\ref{prop:canonical-density} first with
\(\Gamma_\sigma=\Gamma_\sigma^-\) and then with
\(\Gamma_\sigma=\Gamma_\sigma^+\).  Together with
Theorem~\ref{thm:connective}, this gives constants
\(\alpha>0\), \(\vartheta\in(0,1)\), and \(n_0\) such that,
for each \(\varepsilon\in\{-,+\}\),
\begin{equation}
  \frac{\#\{w\in\cW_{O,n}:
    m_\varepsilon(w)<\lfloor\alpha n\rfloor\}}{c_n}
  \le\vartheta^n,
  \qquad n\ge n_0.
  \label{eq:switch-density}
\end{equation}
Indeed, take \(\alpha\) smaller than both constants supplied by
Proposition~\ref{prop:canonical-density}.  For all sufficiently large
\(n\), each set in \eqref{eq:switch-density} is then contained in the
corresponding exceptional set from that proposition.  The strict inequality
between the exceptional exponential growth rates and \(\mu\) gives a common
\(\vartheta\).  A walk in which at least one switch type occurs fewer
than \(\lfloor\alpha n\rfloor\) times belongs to one of the two exceptional
sets in \eqref{eq:switch-density}.  Hence
\[
  \frac{\#\{w\in\cW_{O,n}:
    m_-(w)<\lfloor\alpha n\rfloor\ \text{or}\
    m_+(w)<\lfloor\alpha n\rfloor\}}{c_n}
  \le 2\vartheta^n,
  \qquad n\ge n_0.
\]
Thus, except for an exponentially small proportion of walks, both
switch types occur at least \(\lfloor\alpha n\rfloor\) times.

The next lemma counts marked switches in two ways.

\begin{lemma}
\label{lem:switch-identities}
For \(j=0,1\), let
\[
  \mathcal E_s^{(j)}
  =
  \#\{w\in\cW_{O,s}:m_+(w)\le j\}.
\]
Then, for every \(n\ge1\),
\begin{equation}
  c_{n+h}
  =
  \mathcal E_{n+h}^{(0)}
  +
  \sum_{w\in\cW_{O,n}}
  \frac{m_-(w)}{m_+(w)+1},
  \label{eq:one-cell-switch}
\end{equation}
and
\begin{equation}
  c_{n+2h}
  =
  \mathcal E_{n+2h}^{(1)}
  +
  \sum_{w\in\cW_{O,n}}
  \frac{m_-(w)(m_-(w)-1)}
       {(m_+(w)+1)(m_+(w)+2)}.
  \label{eq:two-cell-switches}
\end{equation}
\end{lemma}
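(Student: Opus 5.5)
We will prove both identities by double counting pairs (walk, set of switched cells).

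For \eqref{eq:one-cell-switch}, consider the set \(\mathcal P\) of pairs \((w,\Delta)\) with \(w\in\cW_{O,n}\) and \(\Delta\) a minus occurrence of \(w\). Switching \(\Delta\) gives a walk \(w'\). By Lemma~\ref{lem:cell-surgery}, \(w'\) is self-avoiding, starts at \(O\), and has length \(n+h\). The cell \(\Delta\) is still endpoint-free for \(w'\), since the endpoints are unchanged, and it is now a plus occurrence of \(w'\). No other cell changes its restriction, because distinct \(r\)-cells have disjoint edge sets. Since \(\Gamma^-_\sigma\ne\Gamma^+_\sigma\) (they have different edge counts), a cell cannot be both a minus and a plus occurrence. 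Therefore
\[
  m_+(w')=m_+(w)+1,
  \qquad
  m_-(w')=m_-(w)-1.
\]
Conversely, given \(w'\in\cW_{O,n+h}\) and a plus occurrence \(\Delta\) of \(w'\), the reverse switch recovers \(w\), again by Lemma~\ref{lem:cell-surgery}. This gives a bijection
\[
  \mathcal P\;\longleftrightarrow\;
  \{(w',\Delta):w'\in\cW_{O,n+h},\ \Delta\text{ a plus occurrence of }w'\}.
\]
Assign the pair \((w,\Delta)\) the weight \(1/(m_+(w)+1)=1/m_+(w')\). On the right, each \(w'\) with \(m_+(w')\ge1\) then receives total weight exactly \(1\). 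Summing over \(\Delta\) on the left gives \(\sum_w m_-(w)/(m_+(w)+1)\), which therefore equals \(c_{n+h}-\#\{w':m_+(w')=0\}=c_{n+h}-\mathcal E^{(0)}_{n+h}\).

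For \eqref{eq:two-cell-switches}, we use unordered pairs \(\{\Delta_1,\Delta_2\}\) of distinct minus occurrences of \(w\). Switching both cells simultaneously is legitimate by Lemma~\ref{lem:cell-surgery} and produces \(w''\in\cW_{O,n+2h}\) with \(m_+(w'')=m_+(w)+2\). The inverse map sends a pair consisting of \(w''\) and two of its plus occurrences back to \(w\). The weight is now \(\binom{m_+(w'')}{2}^{-1}=2/((m_+(w)+1)(m_+(w)+2))\), and there are \(\binom{m_-(w)}{2}\) choices of unordered pairs on the left. The result is \(\sum_w m_-(m_--1)/((m_++1)(m_++2))=c_{n+2h}-\#\{w'':m_+(w'')\le1\}\). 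Using ordered pairs would give the same expression, since the factors of \(2\) cancel.

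The main point to check is that the occurrence counts shift exactly by \(\pm1\) or \(\pm2\). This requires three facts:
\begin{itemize}
\item the set of endpoint-free cells with nonempty restriction is unchanged by a switch, because the endpoints are fixed and the restriction stays nonempty;
\item the fixed isometries \(S_\Delta\) are unaffected by the switch;
\item minus and plus forests are distinct.
\end{itemize}
All three follow directly from the definitions and Proposition~\ref{prop:cell-hierarchy}.
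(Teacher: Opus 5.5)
Your proposal is correct and follows essentially the same weighted double-counting argument as the paper: you mark minus occurrences, switch them via Lemma~\ref{lem:cell-surgery}, and give each marked walk weight \(1/(m_+(w)+1)\), respectively the reciprocal of the number of ways to pick two plus occurrences in the output walk. The differences are cosmetic. You use unordered rather than ordered pairs in \eqref{eq:two-cell-switches}, and the factors of \(2\) cancel. You also spell out why the occurrence counts shift by exactly \(\pm1\) or \(\pm2\), which the paper simply asserts.
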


\begin{proof}
For \eqref{eq:one-cell-switch}, mark one minus occurrence \(\Delta\) in an
\(n\)-step walk and replace \(\Gamma_{\sigma_\Delta(w)}^-\) by
\(\Gamma_{\sigma_\Delta(w)}^+\).  By Lemma~\ref{lem:cell-surgery}, this
produces a self-avoiding walk of length \(n+h\), and
\[
  m_-(w')=m_-(w)-1,\qquad m_+(w')=m_+(w)+1.
\]
Replacing the marked plus occurrence in \(w'\) by the corresponding
minus occurrence recovers \(w\).  Thus the operation is reversible.

Assign the marked walk \(w\) the weight \(1/(m_+(w)+1)\).  For a fixed
\(w\), summing over its \(m_-(w)\) possible marks gives
\[
  \frac{m_-(w)}{m_+(w)+1}.
\]
Conversely, a walk \(w'\) with \(m_+(w')\ge1\) has \(m_+(w')\) possible
marked plus occurrences at which the switch can be reversed.  Since
\(m_+(w')=m_+(w)+1\), each such choice has weight \(1/m_+(w')\), and
their total weight is one.  Therefore every \((n+h)\)-step walk with at
least one plus occurrence contributes one to the weighted sum.  The
remaining \((n+h)\)-step walks are counted by
\(\mathcal E_{n+h}^{(0)}\), which proves
\eqref{eq:one-cell-switch}.

For \eqref{eq:two-cell-switches}, mark an ordered pair of distinct minus
occurrences and switch both cells.  By Lemma~\ref{lem:cell-surgery}, this
produces a self-avoiding walk \(w'\) of length \(n+2h\), with
\(m_+(w')=m_+(w)+2\).  Replacing the two marked plus occurrences by the
corresponding minus occurrences recovers \(w\).

Assign each marked walk the weight
\[
  \frac{1}{(m_+(w)+1)(m_+(w)+2)}.
\]
For a fixed \(w\), summing over its
\(m_-(w)(m_-(w)-1)\) ordered pairs gives the summand in
\eqref{eq:two-cell-switches}.  Conversely, a walk \(w'\) with
\(m_+(w')\ge2\) has \(m_+(w')(m_+(w')-1)\) ordered pairs at which both
switches can be reversed.  Each pair has weight
\[
  \frac{1}{(m_+(w')-1)m_+(w')},
\]
so their total weight is one.  Therefore every \((n+2h)\)-step walk with
at least two plus occurrences contributes one to the weighted sum.
The remaining walks are counted by \(\mathcal E_{n+2h}^{(1)}\), which
proves \eqref{eq:two-cell-switches}.
\end{proof}

\begin{proposition}
\label{prop:h-ratio-regularity}
Put
\[
  R_n=\frac{c_{n+h}}{c_n}.
\]
There are constants \(b>0\), \(B<\infty\), and
\(n_1\) such that
\begin{equation}
  R_n\ge b,
  \qquad
  R_{n+h}-R_n\ge-\frac{B}{n},
  \qquad n\ge n_1.
  \label{eq:h-ratio-regularity}
\end{equation}
\end{proposition}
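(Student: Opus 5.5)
The plan is to read both inequalities in \eqref{eq:h-ratio-regularity} off the two identities of Lemma~\ref{lem:switch-identities}, viewed as averages over \(\cW_{O,n}\). The one-cell identity \eqref{eq:one-cell-switch} gives the lower bound on \(R_n\); comparing the one-cell and two-cell identities through a Cauchy--Schwarz step gives the almost-monotonicity. For \(w\in\cW_{O,n}\) put
\[
  x(w)=\frac{m_-(w)}{m_+(w)+1},
  \qquad
  \langle f\rangle_n=\frac{1}{c_n}\sum_{w\in\cW_{O,n}}f(w).
\]
Let \(\mathsf G_n\subset\cW_{O,n}\) be the set of good walks, those with \(m_-(w)\ge\lfloor\alpha n\rfloor\) and \(m_+(w)\ge\lfloor\alpha n\rfloor\). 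By the consequence of \eqref{eq:switch-density} recorded before the lemma, \(\#(\cW_{O,n}\setminus\mathsf G_n)\le2\vartheta^nc_n\) for \(n\ge n_0\). We use two trivial bounds: \(m_\pm(w)\le n\), since each counted cell contains an edge of \(w\), and \(x(w)\le n\) for every \(w\). On \(\mathsf G_n\) we have the sharper bounds \(\alpha/2\le x(w)\le 1/\alpha\) for large \(n\).

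\emph{Lower bound on \(R_n\).} Drop \(\mathcal E^{(0)}_{n+h}\ge0\) in \eqref{eq:one-cell-switch} and keep only the good walks. This gives
\[
  c_{n+h}\ge\frac{\alpha}{2}\,\#\mathsf G_n\ge\frac{\alpha}{2}(1-2\vartheta^n)c_n,
\]
so \(R_n\ge b:=\alpha/4\) for large \(n\).

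\emph{Almost-monotonicity.} On \(\mathsf G_n\), write the two-switch summand as
\[
  x\cdot\frac{m_--1}{m_++2}
  =x^2\frac{m_++1}{m_++2}-\frac{x}{m_++2}
  \ge x^2-\frac{x^2+x}{\alpha n}.
\]
With the bound \(x\le1/\alpha\) on \(\mathsf G_n\), this is at least \(x^2-K/n\) for a constant \(K\). Drop \(\mathcal E^{(1)}_{n+2h}\) and the bad walks from \eqref{eq:two-cell-switches}, and apply Cauchy--Schwarz to the restricted average (using \(\#\mathsf G_n\le c_n\)). This yields
\[
  \frac{c_{n+2h}}{c_n}\ge\langle x\mathbf 1_{\mathsf G_n}\rangle_n^2-\frac{K}{n}.
\]

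Next compare \(\langle x\mathbf 1_{\mathsf G_n}\rangle_n\) with \(R_n\). By \eqref{eq:one-cell-switch}, \(R_n=\langle x\rangle_n+\mathcal E^{(0)}_{n+h}/c_n\). The bad walks contribute at most \(2n\vartheta^n\) to \(\langle x\rangle_n\). Walks of length \(n+h\) with \(m_+=0\) are bad, so \(\mathcal E^{(0)}_{n+h}\le2\vartheta^{n+h}c_{n+h}\), which gives \(\mathcal E^{(0)}_{n+h}/c_n\le2\vartheta^{n+h}R_n\). Hence \(\langle x\mathbf 1_{\mathsf G_n}\rangle_n\ge R_n-\eta_n\), where
\[
  \eta_n=O\bigl(n\vartheta^n(1+R_n)\bigr).
\]

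Since \(c_{n+2h}/c_n=R_nR_{n+h}\), combining the last two estimates gives
\[
  R_{n+h}\ge\frac{(R_n-\eta_n)^2}{R_n}-\frac{K}{nR_n}\ge R_n-2\eta_n-\frac{K}{bn}.
\]
Here \(R_n\ge b\) was used in the last term. The term \(\eta_n\) is harmless because \(R_n\le u_{n+h}\le (D-1)^{n+h}D\)\(\,\)... more simply, \(R_n\le\max_{w}\) of the one-step growth, i.e.\ \(R_n\le D^h\) by counting extensions of walks. Thus \(\eta_n=O(n\vartheta^n)=O(1/n)\), and the second inequality in \eqref{eq:h-ratio-regularity} follows with a suitable \(B\) and \(n_1\).

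The main obstacle is the Cauchy--Schwarz step. It needs the pointwise bound \(x\le1/\alpha\) to turn the correction \(1/(m_++2)\) into an \(O(1/n)\) loss; this holds on the good set only because of the lower bound \(m_+\ge\alpha n\), which makes the density estimate \eqref{eq:switch-density} for plus occurrences indispensable. It also needs the bad set to be exponentially small so that the crude bound \(x\le n\) there costs nothing.
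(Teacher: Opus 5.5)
Your proposal is correct and is essentially the paper's argument: the lower bound on \(R_n\) comes from the one-cell identity together with the density estimate, and the almost-monotonicity comes from Cauchy--Schwarz applied to the pointwise identity \(\frac{m_-(m_--1)}{(m_++1)(m_++2)}=x^2-\frac{x^2+x}{m_++2}\), where \(\frac{x^2+x}{m_++2}\) is exactly the paper's deficit term \(\frac{m_-(m_-+m_++1)}{(m_++1)^2(m_++2)}\). The differences are bookkeeping only: you restrict to the good set before applying Cauchy--Schwarz, whereas the paper applies it to the full average and bounds the bad walks by \(3n^2\vartheta^n\); and you should delete the aborted aside about \(u_{n+h}\), since \(R_n\le D^h\) is the bound that actually makes \(\eta_n=O(n\vartheta^n)\).
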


\begin{proof}
We first estimate the two error terms in
\eqref{eq:one-cell-switch} and \eqref{eq:two-cell-switches}.  An
\((n+j)\)-step walk has an \(n\)-step initial part.  After this part is
fixed, there are at most \(D\) choices at each of the remaining \(j\)
steps.  Therefore
\[
  c_{n+j}\le D^j c_n,\qquad j\ge0.
\]
For sufficiently large \(n\),
\(0<\lfloor\alpha(n+h)\rfloor\) and
\(1<\lfloor\alpha(n+2h)\rfloor\).  A walk
counted by \(\mathcal E_{n+h}^{(0)}\) has
\(m_+(w)=0<\lfloor\alpha(n+h)\rfloor\), while a walk counted by
\(\mathcal E_{n+2h}^{(1)}\) has
\(m_+(w)\le1<\lfloor\alpha(n+2h)\rfloor\).  Applying
\eqref{eq:switch-density} at these two lengths gives
\[
\begin{aligned}
 \mathcal E_{n+h}^{(0)}
   &\le \vartheta^{n+h}c_{n+h}
    \le D^h\vartheta^{n+h}c_n,\\
 \mathcal E_{n+2h}^{(1)}
   &\le \vartheta^{n+2h}c_{n+2h}
    \le D^{2h}\vartheta^{n+2h}c_n.
\end{aligned}
\]
Since \(h\) is fixed, there are \(C_0<\infty\) and \(n_2\) such that
\begin{equation}
  \frac{\mathcal E_{n+h}^{(0)}+\mathcal E_{n+2h}^{(1)}}{c_n}
  \le C_0\vartheta^n,
  \qquad n\ge n_2.
  \label{eq:switch-error-bound}
\end{equation}

We next prove the lower bound for \(R_n\).  Since distinct \(r\)-cells
have disjoint edge sets,
\[
  m_-(w)\le n,\qquad m_+(w)\le n.
\]
For all sufficiently large \(n\), if
\(m_-(w)\ge \lfloor\alpha n\rfloor\), then
\[
  \frac{m_-(w)}{m_+(w)+1}
  \ge\frac{\lfloor\alpha n\rfloor}{n+1}
  \ge\frac{\alpha}{2}.
\]
By \eqref{eq:switch-density}, such walks form a proportion at least
\(1-\vartheta^n\) of \(\cW_{O,n}\).  Since
\(\mathcal E_{n+h}^{(0)}\ge0\), equation
\eqref{eq:one-cell-switch} therefore gives, for all sufficiently large
\(n\),
\[
  R_n
  \ge\frac{\alpha}{2}(1-\vartheta^n)
  \ge\frac{\alpha}{3}.
\]
This proves the first inequality in \eqref{eq:h-ratio-regularity} with
\(b=\alpha/3\).

It remains to estimate \(R_{n+h}-R_n\).  Put
\[
\begin{aligned}
  \mathcal M_{1,n}&=\frac1{c_n}\sum_{w\in\cW_{O,n}}
       \frac{m_-(w)}{m_+(w)+1},\\
  \mathcal M_{2,n}&=\frac1{c_n}\sum_{w\in\cW_{O,n}}
       \frac{m_-(w)(m_-(w)-1)}
       {(m_+(w)+1)(m_+(w)+2)}.
\end{aligned}
\]
By Lemma~\ref{lem:switch-identities}, equations
\eqref{eq:one-cell-switch} and \eqref{eq:two-cell-switches} give
\[
  R_n=\mathcal M_{1,n}+\varepsilon_{1,n},\qquad
  R_nR_{n+h}=\mathcal M_{2,n}+\varepsilon_{2,n},
\]
where
\[
  \varepsilon_{1,n}=\frac{\mathcal E_{n+h}^{(0)}}{c_n},\qquad
  \varepsilon_{2,n}=\frac{\mathcal E_{n+2h}^{(1)}}{c_n}.
\]
Thus \(\varepsilon_{1,n},\varepsilon_{2,n}\ge0\), and
\eqref{eq:switch-error-bound} gives
\(\varepsilon_{1,n}+\varepsilon_{2,n}\le C_0\vartheta^n\).
Also \(0\le\mathcal M_{1,n}\le n\).  Choose
\(\vartheta_1\in(\vartheta,1)\).  Since
\(n\vartheta^n=O(\vartheta_1^n)\), there is \(C_1<\infty\) such that
\[
  2\mathcal M_{1,n}\varepsilon_{1,n}+\varepsilon_{1,n}^2
  \le C_1\vartheta_1^n.
\]
Using \(\varepsilon_{2,n}\ge0\), we obtain
\[
  R_nR_{n+h}-R_n^2
  \ge
  \mathcal M_{2,n}-\mathcal M_{1,n}^2-C_1\vartheta_1^n.
\]

Write \(\omega(w)=m_-(w)/(m_+(w)+1)\).  The Cauchy--Schwarz inequality gives
\[
  \mathcal M_{1,n}^2
  \le
  \frac1{c_n}\sum_{w\in\cW_{O,n}}\omega(w)^2.
\]
For each \(w\), a direct calculation gives
\[
\begin{aligned}
 &\frac{m_-(w)(m_-(w)-1)}
 {(m_+(w)+1)(m_+(w)+2)}
  -\omega(w)^2\\
 &\qquad =
 -\frac{m_-(w)(m_-(w)+m_+(w)+1)}
 {(m_+(w)+1)^2(m_+(w)+2)}.
\end{aligned}
\]
It follows that
\[
\begin{aligned}
 R_nR_{n+h}-R_n^2
 \ge
 -\frac1{c_n}\sum_{w\in\cW_{O,n}}
 \frac{m_-(w)(m_-(w)+m_+(w)+1)}
 {(m_+(w)+1)^2(m_+(w)+2)}
 -C_1\vartheta_1^n.
\end{aligned}
\]
For all sufficiently large \(n\),
\(\lfloor\alpha n\rfloor\ge\alpha n/2\).  Hence, for a walk satisfying
\(m_+(w)\ge \lfloor\alpha n\rfloor\), the fraction
inside the sum is at most
\[
  \frac{n(2n+1)}{\lfloor\alpha n\rfloor^3}
  \le\frac{24}{\alpha^3n}.
\]
For every walk, the same fraction is at most \(3n^2\).  By
\eqref{eq:switch-density}, walks with
\(m_+(w)<\lfloor\alpha n\rfloor\) form a proportion at most \(\vartheta^n\).
Consequently,
\[
 \frac1{c_n}\sum_{w\in\cW_{O,n}}
 \frac{m_-(w)(m_-(w)+m_+(w)+1)}
 {(m_+(w)+1)^2(m_+(w)+2)}
 \le
 \frac{24}{\alpha^3n}+3n^2\vartheta^n.
\]
Since \(n^2\vartheta^n=O(1/n)\) and
\(\vartheta_1^n=O(1/n)\), the preceding estimates give a constant
\(C_2<\infty\) such that
\[
  R_nR_{n+h}-R_n^2\ge-\frac{C_2}{n}
\]
for all sufficiently large \(n\).  Since the left-hand side equals
\(R_n(R_{n+h}-R_n)\) and \(R_n\ge b\), division by \(R_n\)
proves the second inequality in
\eqref{eq:h-ratio-regularity}, with
\(B=C_2/b\).
\end{proof}

The following lemma converts the estimates in
Proposition~\ref{prop:h-ratio-regularity} into convergence of the ratios.

\begin{lemma}
\label{lem:ratio-criterion}
Let \(\mathsf a_n>0\), and suppose that
\[
  \lim_{n\to\infty}\mathsf a_n^{1/n}=\lambda>0.
\]
Fix \(h\ge1\), and put
\(\mathcal R_n=\mathsf a_{n+h}/\mathsf a_n\).  If there are
\(b>0\), \(K<\infty\), and \(n_0\) such that
\[
  \mathcal R_n\ge b,\qquad
  \mathcal R_{n+h}-\mathcal R_n\ge-\frac{K}{n},
  \qquad n\ge n_0,
\]
then
\[
  \lim_{n\to\infty}\mathcal R_n=\lambda^h.
\]
\end{lemma}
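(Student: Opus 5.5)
The plan is to split the indices into the \(h\) residue classes modulo \(h\) and to work with the logarithms \(\log\mathcal R_n\).

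Fix a residue \(i\in\{0,\ldots,h-1\}\) and consider the subsequence \(x_k=\mathcal R_{i+kh}\). The hypothesis says that \(x_{k+1}-x_k\ge -K/(i+kh)\ge -K'/k\) for large \(k\), so the sequence is "almost nondecreasing", but only up to a harmonic error, which is not summable. The two ingredients must therefore be combined more carefully. First, I will establish a product identity. Telescoping gives
\[
  \frac{\mathsf a_{i+Lh}}{\mathsf a_i}=\prod_{k=0}^{L-1}x_k .
\]
Taking logarithms, dividing by \(L\), and using \(\mathsf a_n^{1/n}\to\lambda\) gives a Cesàro statement:
\[
  \frac1L\sum_{k<L}\log x_k\to h\log\lambda .
\]
The lower bound \(x_k\ge b\) keeps the logarithms bounded below. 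An upper bound is also needed; it comes from the near-monotonicity. Indeed, \(x_j\ge x_k-K'\log(j/k)-o(1)\) for \(j\ge k\), so a single large \(x_k\) would force many later terms to be large.

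The key step is to prove that \(\limsup x_k\le\lambda^h\) and \(\liminf x_k\ge\lambda^h\) using the one-sided control on windows of length proportional to \(k\). For the \(\limsup\): suppose \(x_k\ge\lambda^h+\eta\) along a subsequence. For \(j\in[k,(1+\epsilon)k]\) the almost-monotonicity gives \(x_j\ge x_k-K'\log(1+\epsilon)-o(1)\ge\lambda^h+\eta/2\) once \(\epsilon\) is small relative to \(\eta\). Hence
\[
  \frac{\mathsf a_{i+(1+\epsilon)kh}}{\mathsf a_{i+kh}}\ge(\lambda^h+\eta/2)^{\epsilon k}.
\]
However, the left side equals \(\lambda^{\epsilon kh+o(k)}\) because \(\mathsf a_n^{1/n}\to\lambda\). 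Note that \(\mathsf a_n=\lambda^{n+o(n)}\) yields, over a window of length \(\epsilon kh\), the estimate \(\lambda^{\epsilon kh}e^{o(k)}\), and the \(o(k)\) is negligible against \(\epsilon k\) for fixed \(\epsilon\). This is a contradiction. For the \(\liminf\): suppose \(x_k\le\lambda^h-\eta\). Then, backwards, for \(j\in[(1-\epsilon)k,k]\) we have \(x_j\le x_k+K'\log(1/(1-\epsilon))+o(1)\le\lambda^h-\eta/2\). This gives a window ratio at most \((\lambda^h-\eta/2)^{\epsilon k}\), which again contradicts the growth rate. The lower bound \(b>0\) ensures that all terms are positive and the logarithms finite. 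Moreover, \(\lambda^h-\eta/2>0\) may be assumed by shrinking \(\eta\).

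Since all \(h\) residue classes give the same limit \(\lambda^h\), the full sequence \(\mathcal R_n\) converges to \(\lambda^h\). I expect the main obstacle to be the bookkeeping in the window argument. One must make sure the harmonic error accumulates only to \(K'\log(1+\epsilon)\), which is uniformly small, and not to something growing in \(k\). One must also check that the \(o(n)\) error in \(\log\mathsf a_n\) is \(o(k)\) and hence beaten by the linear gain \(\epsilon k\log(1+\eta')\). Both follow directly once \(\epsilon\) is chosen depending on \(\eta\) and then \(k\to\infty\).
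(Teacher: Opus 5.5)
Your proof is correct and follows essentially the same route as the paper: split into residue classes modulo \(h\), use the near-monotonicity on windows of length proportional to the index (so the harmonic error is only about \(K'\log(1+\varepsilon)\)), and compare the product of ratios over such a window with the growth \(\lambda^{\varepsilon kh+o(k)}\) forced by \(\mathsf a_n^{1/n}\to\lambda\). The differences are cosmetic. The paper works multiplicatively with \(\kappa=C/b\) on windows \([\mathfrak m q,(\mathfrak m+1)q]\) and then interpolates to all indices, whereas you argue by contradiction with an additive margin \(\eta\) anchored at the offending index; incidentally, your version never uses \(\mathcal R_n\ge b\), because positivity of \(\mathsf a_n\) already makes every \(x_j\) positive.
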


\begin{proof}
Fix \(s\in\{0,\ldots,h-1\}\), and set
\[
  \mathsf a_k^{(s)}=\mathsf a_{s+kh},\qquad
  \varrho_k=\frac{\mathsf a_{k+1}^{(s)}}{\mathsf a_k^{(s)}}.
\]
Put \(L=\lambda^h\), \(C=K/h\), and
\(\kappa=C/b\).  Then
\[
  \bigl(\mathsf a_k^{(s)}\bigr)^{1/k}\longrightarrow L,\qquad
  \varrho_k=\mathcal R_{s+kh}\ge b,
  \qquad
  \varrho_{k+1}-\varrho_k\ge-\frac{C}{k}
\]
for all sufficiently large \(k\); since \(s\) is arbitrary, it is enough
to prove that \(\varrho_k\to L\) for this fixed \(s\).

For fixed \(\mathfrak m\ge1\), set
\[
  Q_{\mathfrak m,q}
  =
  \left(
    \frac{\mathsf a_{(\mathfrak m+1)q+1}^{(s)}}
         {\mathsf a_{\mathfrak m q}^{(s)}}
  \right)^{1/(q+1)}.
\]
The root limit gives \(Q_{\mathfrak m,q}\to L\), since
\(\log\mathsf a_k^{(s)}=k\log L+o(k)\) and the two indices in the
quotient differ by \(q+1\).
For sufficiently large \(q\) and \(0\le k\le q\),
\[
\begin{aligned}
  \varrho_{\mathfrak m q+k}
  &\le\varrho_{(\mathfrak m+1)q}
      +C\sum_{\ell=\mathfrak m q+k}^{(\mathfrak m+1)q-1}\frac1\ell\\
  &\le\varrho_{(\mathfrak m+1)q}
      \left(1+\frac{\kappa}{\mathfrak m}\right).
\end{aligned}
\]
Multiplying over \(k=0,\ldots,q\) yields
\[
  \liminf_{q\to\infty}\varrho_{(\mathfrak m+1)q}
  \ge\frac{L}{1+\kappa/\mathfrak m}.
\]
If \(j=(\mathfrak m+1)q\le k<j+\mathfrak m+1\), then
\[
  \varrho_k\ge\varrho_j-\frac{C\mathfrak m}{j}.
\]
Hence the same lower bound holds for
\(\liminf_{k\to\infty}\varrho_k\).  Letting
\(\mathfrak m\to\infty\) gives
\[
  \liminf_{k\to\infty}\varrho_k\ge L.
\]

For the reverse inequality, fix an integer
\(\mathfrak m>\kappa\).  For sufficiently large \(q\) and
\(0\le k\le q\),
\[
\begin{aligned}
  \varrho_{\mathfrak m q+k}
  &\ge\varrho_{\mathfrak m q}
      -C\sum_{\ell=\mathfrak m q}^{\mathfrak m q+k-1}\frac1\ell\\
  &\ge\varrho_{\mathfrak m q}
      \left(1-\frac{\kappa}{\mathfrak m}\right).
\end{aligned}
\]
Multiplication and the limit of \(Q_{\mathfrak m,q}\) give
\[
  \limsup_{q\to\infty}\varrho_{\mathfrak m q}
  \le\frac{L}{1-\kappa/\mathfrak m}.
\]
For any large \(k\), let \(j\) be the smallest multiple of
\(\mathfrak m\) with \(j\ge k\).  Then
\[
  \varrho_k\le\varrho_j+\frac{C\mathfrak m}{k}.
\]
Thus the same upper bound holds for
\(\limsup_{k\to\infty}\varrho_k\).  Letting
\(\mathfrak m\to\infty\) gives
\[
  \limsup_{k\to\infty}\varrho_k\le L.
\]
Together with the lower bound, this proves
\(\varrho_k\to L\) and completes the proof.
\end{proof}

\begin{proof}[Proof of Theorem~\ref{thm:ratio-limit}\textup{(i)}]
Theorem~\ref{thm:connective} gives
\(\lim_n c_n^{1/n}=\mu\).  Proposition~\ref{prop:h-ratio-regularity},
through \eqref{eq:h-ratio-regularity}, verifies the two hypotheses of
Lemma~\ref{lem:ratio-criterion} for \(\mathsf a_n=c_n\), which gives
\eqref{eq:h-step-ratio}.
\end{proof}

\subsection{\texorpdfstring{The Sierpi\'nski
gasket}{The Sierpinski gasket}}
\label{subsec:sg}

The flexibility condition can be checked directly on the
Sierpi\'nski gasket.

\begin{figure}[ht]
\centering
\begin{tabular}{@{}c@{\hspace{1.2cm}}c@{}}
  \begin{overpic}[width=.14\textwidth]{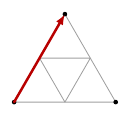}
    \put(9.5,2){\makebox(0,0)[r]{\scriptsize \(O\)}}
    \put(50,81){\makebox(0,0)[b]{\scriptsize \(A\)}}
    \put(90.5,2){\makebox(0,0)[l]{\scriptsize \(B\)}}
  \end{overpic}
  &
  \begin{overpic}[width=.14\textwidth]{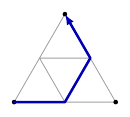}
    \put(9.5,2){\makebox(0,0)[r]{\scriptsize \(O\)}}
    \put(50,81){\makebox(0,0)[b]{\scriptsize \(A\)}}
    \put(90.5,2){\makebox(0,0)[l]{\scriptsize \(B\)}}
  \end{overpic}
  \\[-1pt]
  \scriptsize \(2\): avoids \(B\)
  & \scriptsize \(3\): avoids \(B\)
  \\[12pt]
  \begin{overpic}[width=.14\textwidth]{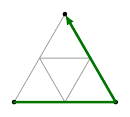}
    \put(9.5,2){\makebox(0,0)[r]{\scriptsize \(O\)}}
    \put(50,81){\makebox(0,0)[b]{\scriptsize \(A\)}}
    \put(90.5,2){\makebox(0,0)[l]{\scriptsize \(B\)}}
  \end{overpic}
  &
  \begin{overpic}[width=.14\textwidth]{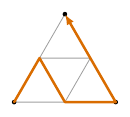}
    \put(9.5,2){\makebox(0,0)[r]{\scriptsize \(O\)}}
    \put(50,81){\makebox(0,0)[b]{\scriptsize \(A\)}}
    \put(90.5,2){\makebox(0,0)[l]{\scriptsize \(B\)}}
  \end{overpic}
  \\[-1pt]
  \scriptsize \(4\): visits \(B\)
  & \scriptsize \(5\): visits \(B\)
\end{tabular}
\caption{Representative pairs of boundary forests in \(F_1\) of the
Sierpi\'nski gasket.  The two forests in each pair have the same boundary
state and lengths differing by one.  Their symmetry images cover all six
states in \(\cS_1\).}
\label{fig:sg-boundary-pairs}
\end{figure}
\FloatBarrier

\begin{proposition}
\label{prop:sg-flexible}
The Sierpi\'nski gasket graph is \(1\)-flexible at scale \(1\).
\end{proposition}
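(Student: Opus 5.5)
The plan is a direct verification from Definition~\ref{def:boundary-flexibility}. First I will determine \(\cS_1\) exactly, then exhibit two forests of lengths differing by one for two representative states, and finally transport these pairs by the dihedral symmetries of \(F_1\), using Lemma~\ref{lem:boundary-rooting} and (NF3).

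\emph{The states.} Here \(V_0=\{O,A,B\}\) has three vertices. A boundary forest in \(F_1\) has every component a path with both endpoints in \(\partial F_1\). Two vertex-disjoint components would need four distinct boundary endpoints, which is impossible. So every element of \(\cS_1\) consists of a single component, and it is one of two kinds:
\begin{itemize}
\item a single abstract edge \(\{x,y\}\), which corresponds to a crossing from \(x\) to \(y\) that avoids the third corner;
\item a two-edge path \(x\)--\(z\)--\(y\), which corresponds to a crossing from \(x\) to \(y\) that visits \(z\).
\end{itemize}
This gives six abstract states. I will check that all six are realizable in \(F_1\), so \(\#\cS_1=6\), and that the dihedral group of the triangle acts transitively on each kind. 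Here \(\operatorname{tr}_1\) is the boundary trace. The \(x\leftrightarrow y\) swap fixes the state, because an unoriented path and its reversal are the same forest. Hence it suffices to treat the states \(\{O,A\}\) and \(O\)--\(B\)--\(A\).

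\emph{Explicit pairs.} Label the midpoints of the sides \(OA\), \(OB\), \(AB\) of \(F_1\) as \(a\), \(b\), \(c\). Then \(F_1\) has six vertices and nine edges: the three half-sides at each corner together with the inner triangle \(abc\).
\begin{itemize}
\item For the state \(\{O,A\}\), I take the path \(O\,a\,A\) of length \(2\) and the path \(O\,b\,a\,A\) of length \(3\). Neither passes through \(B\).
\item For the state \(O\)--\(B\)--\(A\), I take the path \(O\,b\,B\,c\,A\) of length \(4\) and the path \(O\,b\,B\,c\,a\,A\) of length \(5\).
\end{itemize}
These match the lengths shown in Figure~\ref{fig:sg-boundary-pairs}. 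For each path I will check that it is self-avoiding and uses only edges of \(F_1\). I will also check that the order of its boundary visits gives the claimed trace. Each path is a single component whose degree-one vertices are its two corner endpoints, so it is a boundary forest in the sense of Definition~\ref{def:boundary-state}.

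\emph{Transport by symmetry.} I will apply the reflections \(\mathscr R_{x,y}\), restricted to \(F_1\) as in the proof of Lemma~\ref{lem:boundary-rooting}. These are graph automorphisms of \(F_1\) that permute \(\partial F_1\) and preserve edge counts. They therefore carry each pair to a pair for every image state. The only step needing care is that the symmetry images, together with reversal, really cover all six states; I expect this to be a routine orbit check rather than a genuine obstacle.
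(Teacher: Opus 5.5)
Your proposal is correct and follows essentially the same route as the paper's proof: you rule out two-component forests because \(\#\partial F_1=3\), split the six states into the two kinds, realize them with lengths \(2,3\) and \(4,5\), and transport these pairs to all states by the dihedral symmetries of \(F_1\). The only difference is that you write the paths out explicitly; they are valid self-avoiding paths in \(F_1\) with the claimed traces. The paper instead refers to its figure for these representatives.
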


\begin{proof}
We work in \(F_1\).  There are six states in \(\cS_1\).  Every boundary
forest in \(F_1\) has one component: two nonempty components would require
at least four degree-one vertices in \(\partial F_1\), whereas
\(\#\partial F_1=3\).  Thus a state either joins two boundary vertices
without using the third, or visits all three boundary vertices with one of
them in the middle.  The first kind has realizations in \(F_1\) of lengths
\(2\) and \(3\), and the second has realizations in \(F_1\) of lengths
\(4\) and \(5\).  Figure~\ref{fig:sg-boundary-pairs} illustrates one pair
for each of these two kinds.  The symmetries of \(F_1\) transport these
pairs to every choice of endpoints and middle vertex.  Hence every state
in \(\cS_1\) has two representatives in \(F_1\) whose lengths differ by
one.  Therefore, the Sierpi\'nski gasket graph is \(1\)-flexible at
scale \(1\).
\end{proof}

Combining Proposition~\ref{prop:sg-flexible} with
Theorem~\ref{thm:ratio-limit}(i) gives
\(\lim_{n\to\infty}c_{n+1}/c_n=\mu\) for the Sierpi\'nski gasket.

\subsection{\texorpdfstring{Flexibility of polygonal
\(N\)-gaskets}{Flexibility of polygonal N-gaskets}}
\label{subsec:ngasket-flexibility}

We now determine the smallest flexibility step for every polygonal
\(N\)-gasket.  By self-similarity,
Proposition~\ref{prop:ngasket-geometry} applies within every
parent cell.  Thus each immediate subcell has an outer vertex on the
boundary of its parent and two contacts with its neighboring subcells.
A nonempty restriction of a boundary forest to this subcell is a union
of paths whose endpoints belong to these three vertices.  It has only
one component, since two path components would require four endpoints.
Its local behavior therefore has six possibilities: it either joins two
of the three vertices and avoids the third, or visits all three with one
of them in the middle.

Keep the notation of Definition~\ref{def:regular-ngasket}, and abbreviate
\[
  k=k_N,\qquad \ell=\ell_N,\qquad s=s_N=k+1,\qquad
  g=N-2s=2k+\ell-2.
\]
Set
\[
  d=2s-g=4-\ell,
  \qquad
  h_N=\gcd(g,d).
\]
Inside \(F_1^{(N)}\), let
\[
  P_N=\{a_{-s,1},a_{0,1},a_{s,1}\}.
\]
For a path in \(F_1^{(N)}\) with endpoints in \(P_N\), record only the
vertices of \(P_N\) in the order in which the path visits them.  A
sequence and its reversal are regarded as the same
\emph{\(P_N\)-pattern}.  Thus there are six patterns: three choices of
two visited vertices, and three choices of the middle vertex when all
three are visited.

\begin{lemma}
\label{lem:ngasket-three-vertex-switch}
For each of the six \(P_N\)-patterns, there are two self-avoiding paths in
\(F_1^{(N)}\) which have that pattern and whose lengths differ by \(h_N\).
\end{lemma}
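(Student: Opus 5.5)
The plan is to describe every relevant path in \(F_1^{(N)}\) through its subcell itinerary, as in the proof of Proposition~\ref{prop:ngasket-recursions}, and to read off the possible lengths from the arcs inside the \(N\)-cycles \(\Delta_j\). By Proposition~\ref{prop:ngasket-geometry}(iii), after a dihedral relabeling each \(\Delta_j\) is a copy of \(C_N\) with its outer vertex at \(q_0\) and its two contacts at \(q_{\pm s}\). A piece joining the outer vertex to a contact therefore has length \(s\) or \(N-s=s+g\). A piece joining the two contacts has length \(2s\) if it passes through the outer vertex and \(g=N-2s\) otherwise. A piece visiting all three of these vertices has a length fixed by its middle vertex.

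So, for a fixed itinerary, the available local moves are of two kinds. Switching an endpoint piece between its two arcs changes the length by \(\pm g\). Switching a piece in an interior subcell \(\Delta_j\) whose outer vertex \(a_{j,1}\) is not in \(P_N\) changes the length by \(\pm d\), where \(d=2s-g\). Switching the itinerary from the short arc of \(C_N\) to the long arc gives a further, global change. Since a piece is confined to its own subcell and consecutive subcells meet only at contacts, all these choices can be made independently. The only exception is the case \(N\in\{3,5\}\), where the two endpoint subcells are adjacent and the "not both visit the common contact" restriction from the proof of Proposition~\ref{prop:ngasket-recursions} must be respected.

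I would then treat each of the six \(P_N\)-patterns, reducing by the reflection fixing \(a_{0,1}\) to essentially four cases: \(a_0\)--\(a_s\) avoiding \(a_{-s}\); \(a_{-s}\)--\(a_s\) avoiding \(a_0\); and the two three-vertex patterns (middle \(a_0\), middle \(a_s\)). For every pattern, the cells \(\Delta_0\) and \(\Delta_{\pm s}\) are constrained by which outer vertices are visited or avoided. Every other traversed cell is free, contributing a binary choice of \(\pm d\), and each endpoint cell contributes \(\pm g\).

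The arithmetic then splits according to \(\ell=\ell_N\).
\begin{enumerate}
\item If \(\ell=3\), then \(d=1=h_N\), so a single interior switch works. This needs one free interior cell, which is guaranteed after passing to the long itinerary whenever the short one has none. This includes \(N=3\), where instead \(g=1\) and an endpoint switch suffices.
\item If \(\ell=2\), then \(d=2=h_N\), since \(g\) is even. Again one interior switch works.
\item If \(\ell=1\), then \(d=3\) and \(g=2k-1\). Here I look for integers \(\epsilon\in\{-2,\ldots,2\}\) and \(0\le b\le(\text{number of free interior cells})\) with \(\epsilon g-3b=\pm h_N\). This is possible because \(h_N=\gcd(2k-1,3)\), and \(b\approx g/3\approx 2k/3\) is less than the roughly \(3k\) free cells on the long itinerary.
\end{enumerate}

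The main obstacle is this last case \(\ell=1\). One has to count carefully how many free interior cells each pattern's long itinerary offers once \(\Delta_0,\Delta_{\pm s}\) are excluded, and check that the needed \(b\) fits for every \(k\). For the smallest \(N\) (\(N=5\), \(N=9\)) I would verify the bound, or exhibit the two paths, by hand. If endpoint switches are blocked for some pattern, I would instead compare the short and long itineraries directly.
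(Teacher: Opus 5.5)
Your plan is essentially the paper's proof. The paper also represents each copy of $F_0^{(N)}$ by a triangle with arc lengths $s,s,g$, uses at most one endpoint $g$-switch together with interior $d$-switches ($d=2s-g$), and for $\ell=1$ compares one $g$-switch with $2j$ $d$-switches when $k\in\{3j,3j+1\}$, or uses a single $d$-switch when $k=3j+2$. The count you flag as the main obstacle closes directly, without long itineraries or hand checks: on the base path that follows, between consecutive visited vertices of $P_N$, the section avoiding the third vertex, each length-$s$ section has $k$ intermediate copies and the length-$g$ section has $g-1=2k-2$, which always suffices for the $2j$ (or one) $d$-switches. One endpoint switch is always available, since it only uses the endpoint copy's contact on the unused section. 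Your ``roughly $3k$'' figure is therefore not accurate for every pattern, but it is not needed.
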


\begin{proof}
By the graph construction and
Proposition~\ref{prop:ngasket-geometry}(ii)--(iii), \(F_1^{(N)}\) is the
union of \(N\) copies of \(F_0^{(N)}\), arranged in a cycle.
For this proof, write \(a_i:=a_{i,1}\), and let \(\zeta_i\) be the
contact between the \(i\)-th and \((i+1)\)-st copies; all indices are
understood modulo \(N\).  In the \(i\)-th copy, the arcs from \(a_i\)
to either contact, avoiding the other contact, have length \(s\).  The
arc between the two contacts, avoiding \(a_i\), has length \(g\).
We may therefore represent this copy by the weighted triangle
\[
  a_i\zeta_{i-1}\zeta_i,\qquad i\in\mathbb Z/N\mathbb Z,
\]
whose three edge weights are \(s,s,g\).  Each weighted edge represents
the corresponding arc in the original copy of \(F_0^{(N)}\).

Inside one weighted triangle there are three useful local changes:
\[
  \begin{aligned}
  \zeta_{i-1}\zeta_i
    &\longleftrightarrow
      \zeta_{i-1}a_i\zeta_i,\\
  a_i\zeta_i
    &\longleftrightarrow
      a_i\zeta_{i-1}\zeta_i,
  \qquad
  a_i\zeta_{i-1}
    \longleftrightarrow
      a_i\zeta_i\zeta_{i-1}.
  \end{aligned}
\]
The first change has length difference \(2s-g=d\).  Each of the two
changes in the second line has length difference \(g\); they are the
two symmetric forms of the same operation.  We call these operations
a \emph{\(d\)-switch} and a \emph{\(g\)-switch}, respectively.

Now \(P_N=\{a_{-s},a_0,a_s\}\).  The three copies carrying these
vertices divide the cyclic chain into three sections of lengths
\(s,s,g\), measured by the number of steps from one copy to the next.
The interiors of the three sections are disjoint, and neighboring
sections share an endpoint copy.

Fix a \(P_N\)-pattern and choose one of its two orientations.  Between
each consecutive pair in this ordered list, follow the section that
avoids the remaining vertex of \(P_N\).  In the first and last copies
of the section, use the length-\(s\) arc leading from the prescribed
outer vertex to the contact belonging to that section.  In every copy
strictly between them, use the length-\(g\) arc between the two contacts.
If the pattern
visits all three vertices, the two chosen sections have disjoint
intermediate copies and share only the copy carrying the middle vertex.
Inside that copy, the incoming and outgoing length-\(s\) arcs meet only
at the middle vertex, so the two paths can be concatenated.  In either
case, the result is a simple path \(\gamma\) with the prescribed pattern.

We shall use at most one \(g\)-switch, at one endpoint of \(\gamma\).
If that endpoint is \(a_i\), the path uses one contact of the \(i\)-th
copy.  The switch first goes from \(a_i\) to the other contact and then
to the contact already used by \(\gamma\).  The other contact lies on
the unused section of the chain, so the new path is still simple and
has the same \(P_N\)-pattern.

A \(d\)-switch may be made once in any intermediate copy of the section
or sections used by \(\gamma\).  In the \(i\)-th such copy, it replaces
the length-\(g\) arc between the two contacts by the two length-\(s\)
arcs through \(a_i\).  Since this copy is not an endpoint copy,
\(a_i\notin P_N\), so the pattern is unchanged.  Any set of distinct
intermediate copies may be switched simultaneously: their interiors are
disjoint, and every switch keeps the two contacts through which
\(\gamma\) enters and leaves its copy.  Thus one \(g\)-switch adds
\(g\) to the length, while \(r\) \(d\)-switches add \(rd\).

It remains to choose the switches so that the length difference is
exactly \(h_N\).  If \(\ell=3\) and \(N>3\), then \(d=h_N=1\), and
every path \(\gamma\) constructed above contains at least one
intermediate copy, so one \(d\)-switch suffices.  If \(N=3\), then
\(g=h_N=1\), so one \(g\)-switch suffices.
If \(\ell=2\), then \(g=2k\) and \(d=h_N=2\); again one
\(d\)-switch suffices.

Suppose finally that \(\ell=1\).  Then \(g=2k-1\) and \(d=3\).
Thus \(h_N=3\) when \(k\equiv2\pmod3\), and \(h_N=1\) otherwise.
If \(k=3j\), compare a path with one \(g\)-switch to a path with
\(2j\) distinct \(d\)-switches; their added lengths differ by
\[
  2j\cdot3-(2k-1)=6j-(6j-1)=1.
\]
If \(k=3j+1\), the same two choices have length difference
\[
  (2k-1)-2j\cdot3=(6j+1)-6j=1.
\]
If \(k=3j+2\), then both \(g=6j+3\) and \(d=3\) are divisible by
\(3\), and one \(d\)-switch gives a difference of \(3\).

The required \(d\)-switches are always available.  The path \(\gamma\)
either uses a section of length \(s\), which has \(s-1=k\) intermediate
copies, or uses only the section of length \(g\), which has \(g-1\)
intermediate copies.  For \(k=3j\) or \(k=3j+1\), both \(2j\le k\)
and \(2j\le g-1\).  For \(k=3j+2\), only one \(d\)-switch is
required, and both \(k\) and \(g-1\) are positive.
Thus the construction applies to all six patterns.
\end{proof}

\begin{proof}[Proof of Theorem~\ref{thm:ratio-limit}\textup{(ii)}]
We first prove that the polygonal \(N\)-gasket is \(h_N\)-flexible at scale
\(2\).  Fix \(\sigma\in\cS_2\) and
\(\Gamma\in\cC_2(\sigma)\).  Choose a copy \(\Delta\) of
\(F_1^{(N)}\) in \(F_2^{(N)}\) which contains an edge of \(\Gamma\).
The copy \(\Delta\) has one outer vertex on \(\partial F_2^{(N)}\) and
two contacts with its neighbors.  Every endpoint of
\(\Gamma\cap\Delta\) is one of these three vertices.  It cannot have
two components, because that would require four distinct endpoints.
Thus \(\Gamma\cap\Delta\) is a single path.  Self-similarity and a
dihedral relabeling identify \(\Delta\) with \(F_1^{(N)}\) and these
three vertices with \(P_N\).  The path
\(\Gamma\cap\Delta\) consequently has one of the six patterns in
Lemma~\ref{lem:ngasket-three-vertex-switch}.

Replace this path, in turn, by the two paths supplied by that lemma, and
leave \(\Gamma\) unchanged outside \(\Delta\).  Because the two
replacements visit the same vertices of \(P_N\) in the same order,
both resulting graphs are boundary forests with state \(\sigma\).
Their edge counts differ by \(h_N\).  Since \(\sigma\) was arbitrary,
the graph is \(h_N\)-flexible at scale \(2\).

We next show that no smaller step is possible.  If \(\ell=2\), then
\(N=4k+2\).  The cycle \(F_0^{(N)}\) is bipartite, and its two contacts
\(q_{-s}\) and \(q_s\) have the same color because their cyclic distance
\(2s\) is even.  The copies can therefore be colored consistently at
every scale, so each \(F_r^{(N)}\) is bipartite.  In a bipartite graph,
the parity of the length of a path is determined by the colors of its
endpoints.  A boundary state fixes the endpoints of every component and
their pairing, so the edge counts of all forests with that state have
the same parity.  Every flexibility step is therefore even.  Since
\[
  h_N=\gcd(2k,2)=2,
\]
the step constructed above is minimal.

Now suppose that \(\ell=1\) and \(k\equiv2\pmod3\), equivalently
\(N\equiv9\pmod{12}\).  Then
\[
  3\mid s=k+1,
  \qquad
  3\mid g=2k-1.
\]
Decompose a boundary forest into its pieces inside the copies of
\(F_0^{(N)}\).  Each piece is a concatenation of arcs of lengths
\(s,s,g\), and hence has length divisible by \(3\).  Since different
copies have disjoint edge sets, the whole forest also has length
divisible by \(3\).  Thus every flexibility step is divisible by \(3\).
Here \(h_N=\gcd(g,3)=3\), so the constructed step is minimal.

In all remaining cases \(h_N=1\), which is automatically the smallest
positive step.  This proves the stated classification and minimality.
Finally, the ratio limit follows from
Theorem~\ref{thm:ratio-limit}(i).
\end{proof}

\subsection{\texorpdfstring{The Vicsek
counterexample}{The Vicsek counterexample}}
\label{subsec:vicsek}

The Vicsek graph is not \(h\)-flexible for any \(h\ge1\).  At every
scale, all self-avoiding paths
between a fixed pair of
diagonally opposite boundary vertices have the same
length.  By Theorem~\ref{thm:connective}, its connective constant
exists, but its successive ratios have two different subsequential
limits.  Let
\[
  p_0=(0,0),\quad p_1=(1,0),\quad p_2=(1,1),\quad
  p_3=(0,1),\quad p_4=(1/2,1/2),
\]
and define
\[
  \Psi_i(x)=p_i+\frac13(x-p_i),\qquad 0\le i\le4.
\]
The attractor of this iterated function system is the standard Vicsek
nested fractal.  Its essential fixed points are \(p_0,p_1,p_2,p_3\), so
\(F_0\) is the four-cycle \(p_0p_1p_2p_3p_0\), and we take \(O=p_0\).
Since \(\Psi_0\) fixes \(O\), the graphs \(F_m\) are defined
by \eqref{eq:finite-graphs}.  The four corner cells meet the central cell
at four distinct vertices and do not meet one another, so the one-point
intersection condition \eqref{eq:one-point} holds.

For a polynomial \(G(z)\), write \([z^n]G(z)\) for the coefficient of
\(z^n\) in \(G(z)\).

\begin{proposition}
\label{prop:vicsek-recursions}
Let
\[
  H_m(z)=
  \sum_{\substack{w\text{ is a self-avoiding walk in }F_m\\w(0)=O}}
  z^{L(w)}.
\]
The number of \(n\)-step self-avoiding walks in \(F_m\) starting at
\(O\) is \([z^n]H_m(z)\).  Let \(P_m(z)\) be the sum of \(z^{L(w)}\)
over all self-avoiding paths joining a fixed pair of
diagonally opposite boundary vertices, and define
\(Q_m(z)\) in the same way for a fixed adjacent pair.  Dihedral symmetry
makes these polynomials independent of the
chosen pair.  They satisfy
\begin{equation}
  \begin{aligned}
  P_{m+1}&=P_m^3,\\
  Q_{m+1}&=P_m^2Q_m,\\
  H_{m+1}&=H_m+P_m(1+2Q_m+P_m)(H_m-1),
  \end{aligned}
  \label{eq:vicsek-recursions}
\end{equation}
with
\[
  H_0=1+2z+2z^2+2z^3,\qquad
  P_0=2z^2,\qquad Q_0=z+z^3.
\]
\end{proposition}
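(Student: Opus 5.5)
The recursions follow from the tree-like cell structure of the Vicsek graph. Every identity comes from the fact that a self-avoiding path can pass between subcells only through the unique contact vertices. The statement about \([z^n]H_m(z)\) holds by definition. Independence of \(P_m,Q_m\) from the chosen pair follows from (NF3): the dihedral group of the square acts by automorphisms of \(F_m\), by the argument in Lemma~\ref{lem:boundary-rooting}, and it acts transitively on diagonal pairs and on adjacent pairs of \(\partial F_m\).

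\emph{Geometry and initial values.} I first fix the geometry of \(F_{m+1}\). It is the union of five \(m\)-cells \(\Delta_0,\dots,\Delta_4\), where \(\Delta_i\) contains the outer corner \(a_{i,m+1}\) for \(i\le3\), and \(\Delta_4\) is the central cell. By the one-point intersection property and Proposition~\ref{prop:cell-hierarchy}, corner cell \(\Delta_i\) meets the centre only at a single vertex \(c_i\). This vertex is the corner of \(\Delta_i\) diagonally opposite \(a_{i,m+1}\), and it is also the corresponding corner of \(\Delta_4\). Corner cells are pairwise disjoint, so the incidence graph of the cells is a star centred at \(\Delta_4\). The level-zero values are direct counts on the four-cycle. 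From \(p_0\) there are two walks of each length \(1,2,3\), which gives \(H_0\). The two diagonal paths have length \(2\), which gives \(P_0=2z^2\). The two paths between adjacent vertices have lengths \(1\) and \(3\), which gives \(Q_0=z+z^3\).

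\emph{Crossing recursions.} A self-avoiding path from \(a_{0,m+1}\) to \(a_{2,m+1}\) must leave \(\Delta_0\). The only exit is \(c_0\), and once it is used the path cannot return to \(\Delta_0\). Hence the path splits as a path in \(\Delta_0\) from \(a_{0,m+1}\) to \(c_0\), then a path in \(\Delta_4\) from \(c_0\) to \(c_2\), then a path in \(\Delta_2\) from \(c_2\) to \(a_{2,m+1}\). Each piece may wander inside its own cell, including through that cell's other corners, but it cannot leave the cell: any other exit would force a revisit of a contact vertex. All three pieces are diagonal crossings, and conversely any triple of such crossings concatenates to a self-avoiding path, because the cells share only the junctions. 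This gives \(P_{m+1}=P_m^3\). For the adjacent pair \(a_{0,m+1},a_{1,m+1}\) the argument is the same, except that the central piece joins the adjacent corners \(c_0,c_1\). This gives \(Q_{m+1}=P_m^2Q_m\).

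\emph{The \(H\)-recursion, and the main difficulty.} I sort the walks from \(O\) in \(F_{m+1}\) by whether they ever leave \(\Delta_0\). Walks that never leave \(\Delta_0\) are counted by \(H_m\). A walk that does leave must first reach \(c_0\) by a diagonal crossing of \(\Delta_0\), contributing the factor \(P_m\). The point needing care is that a walk in \(\Delta_0\) which passes through \(c_0\) and then continues inside \(\Delta_0\) can never leave afterwards. Such walks are correctly counted once, in \(H_m\), and they do not also appear in the \(P_m\) branch. After \(c_0\), the walk takes a nonempty step into \(\Delta_4\), and then one of two things happens.
\begin{enumerate}
\item It never leaves \(\Delta_4\). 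This piece is a nonempty walk rooted at a boundary corner of \(\Delta_4\), so by the rooting automorphism it contributes \(H_m-1\).
\item It leaves \(\Delta_4\) at some \(c_j\) with \(j\ne0\), and then continues with a nonempty walk in \(\Delta_j\) rooted at \(c_j\), contributing \(H_m-1\). The central piece from \(c_0\) to \(c_j\) is a crossing of \(\Delta_4\). There are two adjacent choices of \(j\), each giving \(Q_m\), and one diagonal choice, giving \(P_m\). This yields \((2Q_m+P_m)(H_m-1)\).
\end{enumerate}
As before, once the walk enters \(\Delta_j\) it cannot come back.

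The step I expect to be the main obstacle is checking that these cases are disjoint and exhaustive. In particular, a walk that crosses \(\Delta_4\) and stops exactly at \(c_j\) must be counted only in the first case. This is guaranteed by requiring the walk in \(\Delta_j\) to be nonempty in the second case, which is exactly what the factor \(H_m-1\) encodes. I also need that the decomposition into pieces is a bijection. That follows from Proposition~\ref{prop:cell-hierarchy}: the pieces have disjoint edge sets and meet only at the \(c_i\), so every concatenation is self-avoiding. Summing the three contributions gives \(H_{m+1}=H_m+P_m(1+2Q_m+P_m)(H_m-1)\).
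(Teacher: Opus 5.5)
Your proof is correct and follows essentially the same route as the paper: the star-shaped five-cell decomposition of \(F_{m+1}\), forced passage through the unique contact vertices for the two crossing recursions, and the splitting of rooted walks according to whether they leave the rooted corner and then whether they leave the central cell, with the nonempty factor \(H_m-1\) preventing double counting. You also spell out a few points the paper leaves implicit: that pieces cannot return through a contact vertex, and that every boundary corner carries the same generating function \(H_m\) via the symmetry of Lemma~\ref{lem:boundary-rooting}.
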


\begin{figure}[ht]
\centering
\begin{overpic}[width=.25\textwidth]{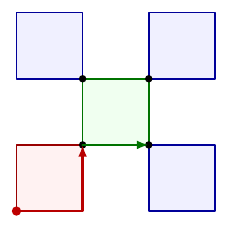}
  \put(22,22){\makebox(0,0){\small\(\Delta_0\)}}
  \put(80,22){\makebox(0,0){\small\(\Delta_1\)}}
  \put(51,58){\makebox(0,0){\small\(\Delta_{\mathrm c}\)}}
  \put(22,80){\makebox(0,0){\small\(\Delta_2\)}}
  \put(80,80){\makebox(0,0){\small\(\Delta_3\)}}
  \put(5.5,5){\makebox(0,0)[r]{\small\color{red!65!black}\(O\)}}
  \put(29,31){\makebox(0,0){\scriptsize\color{red!65!black}\(P_m\)}}
  \put(51,31){\makebox(0,0){\scriptsize\color{green!40!black}\(Q_m\)}}
\end{overpic}
\caption{The five-copy decomposition of the Vicsek graph
\(F_{m+1}\).  Each \(\Delta_j\) is a copy of \(F_m\);
\(\Delta_0\) is the rooted copy and \(\Delta_{\mathrm c}\) is the central
copy.  A walk which leaves \(\Delta_0\) can enter at most one further
corner copy.}
\label{fig:vicsek-decomposition}
\end{figure}
\FloatBarrier

\begin{proof}
The graph \(F_{m+1}\) consists of five copies of \(F_m\): four corner
copies and one central copy.  Each corner copy meets the central copy at
one vertex, and no two corner copies meet.
Figure~\ref{fig:vicsek-decomposition} illustrates this decomposition.

A path joining diagonally opposite boundary vertices crosses three copies
in order: the initial corner, the central copy, and the terminal corner.
Within each copy it again joins diagonally opposite boundary vertices, so
\(P_{m+1}=P_m^3\).  For adjacent boundary vertices, the path joins
diagonally opposite boundary vertices in each of the two corner copies
and adjacent boundary vertices in the central copy.  Hence
\(Q_{m+1}=P_m^2Q_m\).

Now consider a walk counted by \(H_{m+1}\).  If it stays in the corner
copy containing \(O\), its contribution is \(H_m\).  Otherwise it first
crosses that copy to the central copy, contributing \(P_m\).  From the
contact vertex it either makes a nonempty walk inside the central copy
and stops there, or crosses the central copy into one of the other three
corner copies and makes a nonempty terminal walk in that copy.  The three
exits consist of two adjacent crossings and
one diagonally opposite crossing.
Therefore the contribution of walks leaving the rooted corner is
\[
  P_m\bigl((H_m-1)+(2Q_m+P_m)(H_m-1)\bigr).
\]
Adding the walks that stay in the rooted corner gives the third recursion
in \eqref{eq:vicsek-recursions}.  The
initial polynomials follow by listing the self-avoiding paths in
\(F_0\).
\end{proof}

\begin{proof}[Proof of Theorem~\ref{thm:ratio-limit}\textup{(iii)}]
The first two recursions in Proposition~\ref{prop:vicsek-recursions} give
\[
  P_m(z)=2^{3^m}z^{2\cdot3^m},\qquad
  Q_m(z)
  =
  2^{3^m-1}
  \left(z^{2\cdot3^m-1}+z^{2\cdot3^m+1}\right).
\]
Fix \(r\ge1\), choose a path counted by \(P_r\), and let \(\sigma\) be
its boundary state.  Every forest in \(\cC_r(\sigma)\) consists of one
self-avoiding path joining the same pair of
diagonally opposite boundary vertices,
and is therefore counted by \(P_r\).  The formula for \(P_r\) shows that
every such forest has exactly \(2\cdot3^r\) edges.  Thus
\(\cC_r(\sigma)\) contains no two forests whose numbers of edges differ
by a positive integer.  Since \(r\) was arbitrary, the Vicsek graph is
not \(h\)-flexible at any finite scale for any \(h\ge1\).

The third recursion in \eqref{eq:vicsek-recursions}, together with
\(\deg H_0=3\), gives by induction
\[
  \deg H_m=2\cdot3^m+m+1.
\]
Moreover, every term in \(H_{m+1}-H_m\) has
degree at least \(2\cdot3^m+1\).  Hence \([z^n]H_m\) no longer changes
once \(2\cdot3^m\ge n\), and its stable value is \(c_n\).

At level \(1\), \eqref{eq:vicsek-recursions} gives
\[
  H_1
  =
  1+2z+2z^2+6z^3+12z^4+20z^5+24z^6+16z^7+8z^8.
\]
Moreover,
\[
  P_1=8z^6,\qquad Q_1=4z^5+4z^7.
\]
The term \(P_1(2Q_1+P_1)(H_1-1)\) starts in degree \(12\).  Therefore,
in degrees \(7,8,9\), only \(H_1+P_1(H_1-1)\) contributes to \(H_2\).
Using \(P_1=8z^6\), we obtain
\[
  [z^7]H_2=16+8\cdot2=32,\qquad
  [z^8]H_2=8+8\cdot2=24,\qquad
  [z^9]H_2=0+8\cdot6=48.
\]
Now let \(k\ge3\) and \(j\in\{-2,-1,0\}\).  Since
\(\deg H_{k-1}<3^k-2\),
\([z^{3^k+j}]H_{k-1}=0\).  Moreover, every term in
\[
  P_{k-1}(2Q_{k-1}+P_{k-1})(H_{k-1}-1)
\]
has degree at least \(4\cdot3^{k-1}>3^k\).  Thus only
\(P_{k-1}(H_{k-1}-1)\) contributes to the coefficient of
\(z^{3^k+j}\) in \(H_k\), and
\[
  [z^{3^k+j}]H_k
  =
  2^{3^{k-1}}
  [z^{3^{k-1}+j}]H_{k-1}.
\]
Starting with the three coefficients of \(H_2\) above and applying this
identity repeatedly proves \eqref{eq:vicsek-special-counts}.

The first formula in \eqref{eq:vicsek-special-counts} gives
\[
  \mu
  =
  \lim_{k\to\infty}
  c_{3^k-2}^{1/(3^k-2)}
  =\sqrt2.
\]
The other two formulas in \eqref{eq:vicsek-special-counts} give
\[
  \frac{c_{3^k-1}}{c_{3^k-2}}=\frac34,\qquad
  \frac{c_{3^k}}{c_{3^k-1}}=2.
\]
Since these two subsequential limits are different,
\(c_{n+1}/c_n\) does not converge.
\end{proof}

\appendix
\renewcommand{\thesection}{\Alph{section}}
\setcounter{section}{0}
\refstepcounter{section}
\section*{Appendix \thesection\quad Dynamics of the polygonal crossing recursions}
\label{app:ngasket-dynamics}
\addcontentsline{toc}{section}{Appendix A. Dynamics of the polygonal crossing recursions}

The two-variable recursion for the Sierpi\'nski gasket and its critical
behavior were introduced by Hattori, Hattori and
Kusuoka~\cite{HHK1990}.  The same renormalization viewpoint motivates
the analysis below, which combines coordinatewise monotonicity with
compactness of the critical orbit.

\begin{theorem}
\label{thm:ngasket-dynamics}
Let \(N\ge3\) and \(4\nmid N\).  Every entry of
\(\mathbf C_m^{(N)}(\beta)\) tends to zero when
\(\beta>\beta_{\mathrm c,N}\), and every entry tends to infinity when
\(\beta<\beta_{\mathrm c,N}\).  At
\(\beta=\beta_{\mathrm c,N}\), the crossing vector has the following
limits.  Put \(\tau=(\sqrt5-1)/2\).
\begin{enumerate}[label=\textup{(\roman*)}]
\item If \(N=3\), then
\[
  (x_m,y_m)\longrightarrow(\tau,0).
\]
\item If \(N=5\), then
\[
  (\phi_{1,m},\phi_{2,m},\theta_{1,m},\theta_{2,m})
  \longrightarrow
  (\phi_*,0,\theta_*,0),
\]
where
\[
  \theta_*^5+2\theta_*^4=1,
  \qquad
  \phi_*=\frac1{\theta_*(1+\theta_*)}.
\]
\item If \(N\in\{6,9\}\), then
\[
  (X_m,Y_m)\longrightarrow(t_*,t_*),
  \qquad
  t_*=\tau^{1/d_N},
  \qquad
  d_N=\frac N3.
\]
\item If \(N=4k+\ell\notin\{3,5,6,9\}\), where
\(\ell\in\{1,2,3\}\), then
\[
  (X_m,Y_m)\longrightarrow(X_{*,N},Y_{*,N}),
\]
where \(Y_{*,N}\in(0,1)\) is the unique solution of
\[
  Y_{*,N}^{\,N}+2Y_{*,N}^{\,2k+2}=1
\]
and
\[
  X_{*,N}
  =
  \bigl(Y_{*,N}^{\,k}+Y_{*,N}^{\,3k+\ell-2}\bigr)^{-1}.
\]
\end{enumerate}
\end{theorem}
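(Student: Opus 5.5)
The plan rests on coordinatewise monotonicity in \(\beta\) of the crossing vector together with the explicit polynomial maps of Proposition~\ref{prop:ngasket-recursions}. By Proposition~\ref{prop:ngasket-completeness}, \(\beta_{\mathrm c,N}=\beta^{\mathrm{cr}}_{\mathrm c,N}\), and by the Remark after Theorem~\ref{thm:ngasket} we expect the critical orbit to be bounded. In each case I first locate the nontrivial fixed point of the recursion. For \(N\notin\{3,5\}\) set \(X=X_*\), \(Y=Y_*\) in \eqref{eq:ngasket-recursion}. The \(X\)-equation gives
\[
X_*=\bigl(Y_*^{k}+Y_*^{3k+\ell-2}\bigr)^{-1}.
\]
Dividing the \(Y\)-equation by the \(X\)-equation gives
\[
Y_*\bigl(Y^k+Y^{3k+\ell-2}\bigr)=Y^{2k+\ell-3}+Y^{2k+1}.
\]
Dividing by \(Y^{k+1}\), this should reduce to \(Y^{N}+2Y^{2k+2}=1\). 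This identity is a routine check using \(N=4k+\ell\). Strict monotonicity of the left side then gives a unique root in \((0,1)\). For \(N\in\{6,9\}\) the degenerate diagonal case was already handled in the proof of Theorem~\ref{thm:ngasket}. The fixed points \((\tau,0)\) for \(N=3\) and \((\phi_*,0,\theta_*,0)\) for \(N=5\) are found the same way. In the \(N=5\) case, setting \(\phi_2=\theta_2=0\) reduces the system to
\[
\phi=\phi^2\theta(1+\theta),\qquad \theta=\phi^2+\theta^3\phi^2 .
\]
These give \(\phi\theta(1+\theta)=1\) and then \(\theta^5+2\theta^4=1\).

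Off-critical behaviour comes first. Fix \(\beta>\beta_{\mathrm c}\) and \(\beta'\in(\beta_{\mathrm c},\beta)\). The initial vector at \(\beta\) equals the one at \(\beta'\) scaled down. More usefully, every crossing at level \(m\) has length at least \(\lambda^m\) times a fixed constant, since the endpoints lie at Euclidean distance \(\asymp\lambda^m\) as in Lemma~\ref{lem:logarithmic-localization}. Hence
\[
\mathbf C_m(\beta)\le e^{-(\beta-\beta')c\lambda^m}\,\mathbf C_m(\beta'),
\]
and the right side is bounded times a factor tending to zero, so every entry tends to \(0\). For \(\beta<\beta_{\mathrm c}\) the orbit is unbounded. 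I would then show that once some coordinate exceeds a threshold (for instance, \(X_m\) and \(Y_m\) both larger than the fixed-point values, or the total weights \(\mathsf O_m,\mathsf P_m\) larger than \(1\)), the recursion forces all coordinates to grow. This uses that each entry of the image dominates a monomial in \(\mathsf O,\mathsf P\) of total degree at least \(2\). Combined with unboundedness along a subsequence and monotonicity in \(\beta\) (compare against slightly larger \(\beta''<\beta_{\mathrm c}\)), this gives divergence of every entry. For \(N=3\) and \(N=5\), the coordinates \(y,\phi_2,\theta_2\) need care, since \(y_{m+1}=x_my_m(x_m+2y_m)\) diverges once \(x_m\) does. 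A similar argument handles the others.

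At criticality the orbit is bounded, and I expect this to be the main obstacle. I would argue compactness of the orbit together with positivity of \(\liminf\). The \(\liminf\) cannot tend to zero, since then the orbit would fall into the basin of \(0\), which is open in \(\beta\) by continuity of finitely many iterates, contradicting criticality. The orbit also cannot approach infinity. Then I would use a Lyapunov-type monotone quantity. For two-variable maps, the ratio \(Y_m/X_m\) or the quantity \(Y_m^{N}+2Y_m^{2k+2}\) relative to \(1\) should be shown to move monotonically toward the fixed point. Any \(\omega\)-limit point is invariant and bounded away from \(0\) and \(\infty\), so it must be the unique interior fixed point, or for \(N=3,5\) the saddle with vanishing visiting coordinates. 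For those cases one shows \(y_m/x_m\to0\), because
\[
\frac{y_{m+1}}{x_{m+1}}\le \frac{x_my_m(x_m+2y_m)}{(x_m+y_m)^2},
\]
which contracts geometrically when \(x_m\) stays below \(1\). Ruling out other invariant sets, such as periodic orbits, via this monotonicity is where the real work lies, especially for the four-variable \(N=5\) system.
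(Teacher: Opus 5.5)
Your argument for \(\beta>\beta_{\mathrm c,N}\) is correct, and lighter than the paper's. Every crossing of \(F_m^{(N)}\) has at least \(\rho_N^{-m}\) edges. Any \(\beta'\in(\beta_{\mathrm c,N},\beta)\) lies in the upward-closed set \(\cB\), so Proposition~\ref{prop:ngasket-completeness} bounds \(\mathbf C_m^{(N)}(\beta')\). Then \(\mathbf C_m^{(N)}(\beta)\le e^{-(\beta-\beta')\rho_N^{-m}}\mathbf C_m^{(N)}(\beta')\to0\), with no knowledge of the critical orbit needed. The fixed-point algebra has a slip: substituting \(X_*=(Y_*^k+Y_*^{3k+\ell-2})^{-1}\) into the \(Y\)-equation gives \(Y_*(Y_*^k+Y_*^{3k+\ell-2})^2=Y_*^{2k+\ell-3}+Y_*^{2k+1}\), with the square. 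Only this equation reduces, after division by \(Y_*^{2k+\ell-3}\), to \(Y_*^N+2Y_*^{2k+2}=1\).

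The genuine gaps are the two steps you leave open. First, you need that a non-escaping orbit is bounded. This is what gives boundedness of the critical orbit; the Remark after Theorem~\ref{thm:ngasket} cannot supply it, since it is deduced from the theorem you are proving. It is also what upgrades ``unbounded along a subsequence'' to divergence of every entry for \(\beta<\beta_{\mathrm c,N}\). Comparison with \(\beta''\) only transfers subsequential unboundedness, and a large \(X_m\) paired with a tiny \(Y_m\) does not force escape. The paper obtains this from \(Y_{m+1}/X_{m+1}=f(Y_m)\le1\), where \(f(y)=(y^\delta+y^\alpha)/(1+y^{\alpha+\delta})\), \(\alpha=k+1\), \(\delta=k+\ell-3\). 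This makes escape the open, downward-closed event ``\(Y_m>1\) for some \(m\)''. It combines this with the eigenvector functional \(W_m=2\log X_m-(2-\lambda)\log Y_m\), which satisfies \(W_{m+1}\le\lambda W_m+2\log2\) with \(\lambda\in(0,1)\) as long as \(Y_m<1\). For \(N=5\) it shows directly that non-escaping orbits satisfy \(\phi_{1,m}\theta_{1,m}\le1\) and \(\phi_{1,m}^2\theta_{1,m}\le1\), hence \(\phi_{1,m}\le16\) and \(\theta_{1,m}\le4096\).

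Second, and more seriously, a compact invariant set bounded away from \(0\) and \(\infty\) need not be a fixed point. Your sentence ``it must be the unique interior fixed point'' is therefore exactly the missing theorem. The Lyapunov quantities you suggest are not shown to be monotone; for instance \(Y_m/X_m=f(Y_{m-1})\) simply tracks \(Y_{m-1}\).

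The paper's idea is homogeneity plus order. Every monomial of \(\mathcal R(x,y)=\bigl(x^2(y^k+y^{3k+\ell-2}),x^2(y^{2k+\ell-3}+y^{2k+1})\bigr)\) has degree at least \(r=k+2>1\), so \(\mathcal R(tz)\ge t^r\mathcal R(z)\) for \(t\ge1\). Suppose two distinct points of the \(\omega\)-limit set \(L\) were coordinatewise comparable. Strict monotonicity would give \(q\ge tp\) with \(t>1\) after one step, and then \(\mathcal R^m(q)\ge t^{r^m}\mathcal R^m(p)\), which is impossible in a compact set away from the axes. Hence \(L\) is an antichain. Since \(f\) is strictly increasing on \((0,1)\), \(\mathcal R\) is injective there and preserves the order of first coordinates on \(L\). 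So \(\mathcal R|_L\) is an order-preserving bijection of a compact linearly ordered set; it fixes \(\min L\) and \(\max L\), and both must equal the unique positive fixed point.

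For \(N=5\) this breaks down, because the coordinate quotient \(x(1+x)/(1+x^3)\) of the limiting planar map is not monotone. The paper first shows that \(\phi_{2,m}/\phi_{1,m}\) and \(\theta_{2,m}/\theta_{1,m}\) decay geometrically, since \(H_m\) is bounded at criticality. The main pair is then a summable perturbation of \(\mathcal G\). After that it needs all of Lemma~\ref{lem:pentagasket-reduced}:
\begin{enumerate}
\item the boundary curve of the basin of the origin;
\item the induced one-dimensional map \(T\) on that curve;
\item attraction to \([1/2,1]\), where \(T\) is decreasing;
\item the exclusion of two-cycles through the strictly increasing function \(J\).
\end{enumerate}

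For \(N=3\), your ratio contraction plus the limiting map \(g(x)=x^2(1+x)\) is essentially the paper's route. It still needs \(x_m<1\) at criticality, which again comes from the escape threshold.
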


We prove Theorem~\ref{thm:ngasket-dynamics} by treating the generic
two-variable recursion first.  The recursions for \(N=3\) and \(N=5\)
require separate arguments; the cases \(N=6\) and \(N=9\)
then follow from the scalar recursion used in the proof of
Theorem~\ref{thm:ngasket}.

\begin{proposition}
\label{prop:generic-ngasket-phase}
Let \(N=4k+\ell\notin\{3,5,6,9\}\), where
\(\ell\in\{1,2,3\}\).  There is a unique value
\(\beta_{*,N}>0\) such that
\[
  \begin{array}{ll}
  \beta>\beta_{*,N}:&(X_m,Y_m)\longrightarrow(0,0),\\[1mm]
  \beta<\beta_{*,N}:&(X_m,Y_m)\longrightarrow(+\infty,+\infty).
  \end{array}
\]
At \(\beta=\beta_{*,N}\),
\[
  (X_m,Y_m)\longrightarrow(X_{*,N},Y_{*,N}),
\]
where \(Y_{*,N}\in(0,1)\) is the unique solution of
\[
  Y_{*,N}^{\,N}+2Y_{*,N}^{\,2k+2}=1
\]
and
\[
  X_{*,N}
  =
  \bigl(Y_{*,N}^{\,k}+Y_{*,N}^{\,3k+\ell-2}\bigr)^{-1}.
\]
Consequently,
\(\beta_{*,N}
=\beta_{\mathrm c,N}^{\mathrm{cr}}
=\beta_{\mathrm c,N}\).
\end{proposition}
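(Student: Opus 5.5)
The plan is to work directly with the two-variable map
\[
  T(X,Y)=\bigl(X^2(Y^{k}+Y^{3k+\ell-2}),\;X^2(Y^{2k+\ell-3}+Y^{2k+1})\bigr),
\]
whose iterates, started from \((X_0(\beta),Y_0(\beta))\), are \((X_m,Y_m)\) by Proposition~\ref{prop:ngasket-recursions}. Two structural facts drive everything. First, \(T\) has nonnegative coefficients, so it is coordinatewise nondecreasing on \([0,\infty)^2\). Second, both \(X_0\) and \(Y_0\) are strictly decreasing in \(\beta\). Hence every \(X_m(\beta)\) and \(Y_m(\beta)\) is strictly decreasing in \(\beta\).

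\textbf{Fixed point.} The first equation \(X=X^2(Y^k+Y^{3k+\ell-2})\) forces \(X=X_{*}:=(Y^k+Y^{3k+\ell-2})^{-1}\) when \(X>0\). Substituting into the second and cancelling \(Y^{2k+\ell-3}\) gives exactly
\[
  Y^N+2Y^{2k+2}=1 .
\]
Its left side is increasing on \((0,1)\) and runs from \(0\) to \(3\), so the positive fixed point \(p_*=(X_{*,N},Y_{*,N})\) is unique.

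\textbf{Linearization at \(p_*\).} The Jacobian at \(p_*\) has positive entries, so by Perron--Frobenius it has a positive leading eigenvector. I will compute its trace and determinant in terms of \(Y_*\) and show that the leading eigenvalue exceeds \(1\) while the other has modulus below \(1\), so \(p_*\) is a saddle. This check is where the exceptional cases \(N\in\{6,9\}\) should differ: there the diagonal \(X=Y\) is invariant and the dynamics is one-dimensional.

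\textbf{Phases.} Let
\[
  \mathcal A_0=\{\beta:(X_m,Y_m)\to(0,0)\},\qquad
  \mathcal A_\infty=\{\beta:(X_m,Y_m)\to(\infty,\infty)\}.
\]
\begin{enumerate}[label=\textup{(\alph*)}]
\item Near the origin \(T\) is superattracting: \(\max(X',Y')\le 2X^2\) when \(X,Y\le1\), since all exponents of \(Y\) are at least \(k\ge1\) for \(N\ge7\), with the remaining small \(N\) handled directly. So there is an open box around \(0\) that is absorbed into \(0\). By continuity of \(\beta\mapsto(X_m,Y_m)\), the set \(\mathcal A_0\) is open.
\item Symmetrically, if some iterate lies strictly above \(p_*\) in both coordinates, I will show divergence to infinity. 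Monotonicity makes the orbit increasing from that point on. A bounded increasing orbit would converge to a fixed point strictly above \(p_*\), and none exists. So \(\mathcal A_\infty\) is open, and likewise any orbit eventually strictly below \(p_*\) lies in \(\mathcal A_0\).
\item Large \(\beta\) lies in \(\mathcal A_0\). For \(\beta=0\), \(X_0=Y_0=2\). Since \(Y_*<1\) and \(X_*\) can be compared with \(2\), possibly after one iterate, \(\beta=0\) lies in \(\mathcal A_\infty\).
\end{enumerate}
Connectedness of \([0,\infty)\) then gives some \(\beta_{*,N}\) belonging to neither set.

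\textbf{The critical parameter.} This is the main obstacle. At such a parameter I must show the orbit converges to \(p_*\), and that the parameter is unique. I would argue as follows.
\begin{enumerate}[label=\textup{(\alph*)}]
\item The orbit is never eventually comparable to \(p_*\) with a margin, since otherwise (b) would place it in \(\mathcal A_0\) or \(\mathcal A_\infty\).
\item The orbit must therefore remain in the two "mixed" quadrants, where \(X-X_*\) and \(Y-Y_*\) have opposite signs.
\item In those quadrants I would construct a Lyapunov-type quantity. The first candidate is a weighted combination \(\log X+c\log Y\), with \(c\) taken from the left Perron eigenvector. I would try to show it is strictly contracted toward its value at \(p_*\), which forces the orbit into a compact set and then, by the saddle structure, onto the stable manifold of \(p_*\).
\end{enumerate}

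\textbf{Uniqueness.} For two critical values \(\beta_1<\beta_2\), the two orbits are strictly ordered in both coordinates from level \(0\) on. Both converge to \(p_*\) along the stable direction, so their difference eventually has a positive (componentwise ordered) component. The expansion along the positive unstable eigenvector would then grow that difference, which is a contradiction.

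\textbf{Conclusion.} Uniqueness gives \(\mathcal A_0=(\beta_{*,N},\infty)\) and \(\mathcal A_\infty=[0,\beta_{*,N})\). The bounded orbits are therefore exactly those with \(\beta\ge\beta_{*,N}\), so \(\beta_{*,N}=\beta^{\mathrm{cr}}_{\mathrm c,N}\) by \eqref{eq:ngasket-critical-beta}. Finally, Proposition~\ref{prop:ngasket-completeness} gives \(\beta^{\mathrm{cr}}_{\mathrm c,N}=\beta_{\mathrm c,N}\).
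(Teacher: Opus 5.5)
\textbf{Verdict.} Your fixed-point computation, the superattracting box at the origin, and the argument producing some parameter in neither phase are fine. There is one small repair. Monotonicity of $T$ alone does not make an orbit starting strictly above $p_*$ increasing. Here it does hold, because $T_1(X,Y)/X=X(Y^k+Y^{3k+\ell-2})$ and $T_2(X,Y)/Y=X^2(Y^{2k+\ell-4}+Y^{2k})$ are increasing in both variables and equal $1$ at $p_*$. Alternatively, use $T(tz)\ge t^{k+2}T(z)$ for $t\ge1$.

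\textbf{The gap: convergence at the critical parameter.} This is the step you flag yourself, and your sketch does not close it, for two reasons.

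First, $\log X+c\log Y$ built from the left Perron eigenvector is the \emph{expanding} direction, not a contracted one. The usable functional comes from the other left eigenvector and has mixed signs. With $c=2k+\ell-3$, $\delta=k+\ell-3$ and $\lambda\in(0,1)$ the smaller root of $t^2-(c+2)t+2\delta$, the paper takes $W_m=2\log X_m-(2-\lambda)\log Y_m$. It satisfies $W_{m+1}\le\lambda W_m+2\log2$, but only once you know $0<Y_m<1$ along the critical orbit. The paper gets this by defining the escape set as $\{\beta:Y_m>1\text{ for some }m\ge1\}$ and using $Y_m\le X_m$, which follows from $T_2/T_1=f(Y)\le1$.

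Second, and more seriously, even a compact critical orbit need not converge to $p_*$ ``by the saddle structure''. The stable-manifold picture only governs orbits that already stay near $p_*$. Nothing in your argument stops the orbit from accumulating on a different compact invariant set in the mixed quadrants, for instance a period-two orbit $\{p,q\}$ of incomparable points. Your uniqueness argument assumes both critical orbits converge to $p_*$, so it inherits this gap.

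\textbf{The missing global step.} The paper closes this with an omega-limit argument.
\begin{enumerate}
\item First show the critical orbit lies in a compact subset of $(0,\infty)\times(0,1)$. The upper bound comes from $W_m$. The lower bounds hold because the orbit can never enter the small absorbing box. $Y_m$ stays away from $1$, since $Y_{m_j}\to1$ would force $Y_{m_j+1}\to$ a value at least $2$. Staying away from $1$ matters because $f(y)=f(1/y)$.
\item The omega-limit set $L$ then satisfies $T(L)=L$.
\item The inequality $T(tz)\ge t^{r}T(z)$ with $r=k+2>1$ shows that no two distinct points of $L$ are comparable.
\item Strict monotonicity of $f$ on $(0,1)$ makes $T$ injective there. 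It also forces $T|_L$ to preserve the order of first coordinates: if $p_1<q_1$, then $p_2>q_2$, so $T_1(p)\ge T_1(q)$ would give $T_2(p)>T_2(q)$, making the images comparable.
\item An order-preserving bijection of $L$ fixes the points of extreme first coordinate. These are positive fixed points, so $L=\{p_*\}$.
\end{enumerate}

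\textbf{Uniqueness.} This needs no linearization. For $\beta>\beta_{*,N}$ the initial point is at most $t<1$ times the critical one. The orbit is then dominated by $t^{r^m}$ times the bounded critical orbit, so it tends to $0$.
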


\begin{proof}
Define
\[
  A(y)=y^k+y^{3k+\ell-2},
  \qquad
  B(y)=y^{2k+\ell-3}+y^{2k+1},
\]
so that the recursion can be written as
\[
  \mathcal R(x,y)=\bigl(x^2A(y),x^2B(y)\bigr).
\]
By Proposition~\ref{prop:ngasket-recursions},
\((X_{m+1},Y_{m+1})=\mathcal R(X_m,Y_m)\).
Set
\[
  \alpha=k+1,
  \qquad
  \delta=k+\ell-3.
\]
In the present case \(k\ge1\), \(\delta\ge1\), and
\(\alpha>\delta\).  For every \(x,y>0\),
\begin{equation}
  \frac{\mathcal R_2(x,y)}{\mathcal R_1(x,y)}
  =
  f(y)
  :=
  \frac{y^\delta+y^\alpha}{1+y^{\alpha+\delta}}.
  \label{eq:generic-projective}
\end{equation}
Since
\[
  1-f(y)
  =
  \frac{(1-y^\delta)(1-y^\alpha)}
       {1+y^{\alpha+\delta}},
\]
we have \(0<f(y)\le1\) for every \(y>0\), with equality only at
\(y=1\).
The ratio in \eqref{eq:generic-projective} therefore gives
\begin{equation}
  Y_m\le X_m,
  \qquad m\ge1.
  \label{eq:generic-coordinate-order}
\end{equation}
Moreover,
\[
  f'(y)
  =
  \frac{
    \delta y^{\delta-1}(1-y^{2\alpha})
    +\alpha y^{\alpha-1}(1-y^{2\delta})
  }{(1+y^{\alpha+\delta})^2}
  >0,
  \qquad 0<y<1.
\]
Thus \(f\) is strictly increasing on \((0,1)\).

We first determine the positive fixed points of \(\mathcal R\).  The
fixed-point equations are
\[
  x=A(y)^{-1},
  \qquad
  yA(y)^2=B(y).
\]
Substituting
\[
  A(y)=y^k(1+y^{\alpha+\delta}),
  \qquad
  B(y)=y^{k+\delta}(1+y^{\alpha-\delta}),
\]
into the second equation gives
\begin{equation}
  y^N+2y^{2k+2}=1.
  \label{eq:generic-positive-fixed-point}
\end{equation}
The left-hand side of \eqref{eq:generic-positive-fixed-point} is
strictly increasing from \(0\) to \(3\) on \((0,1)\).  Hence
\((X_{*,N},Y_{*,N})\) is the unique positive fixed point.

We next locate the parameter at which the orbit changes from
divergence to decay.  Let
\[
  \mathcal E
  =
  \left\{
    \beta\ge0:
    Y_m(\beta)>1\ \text{for some }m\ge1
  \right\}.
\]
If \(Y_m>1\), then \eqref{eq:generic-coordinate-order} gives
\(X_m\ge Y_m>1\).  The recursion then gives
\(X_{m+1}>X_m^2\) and
\(Y_{m+1}>Y_m^{\,2k+\ell-1}\), so both coordinates tend to
infinity.

The map \(\mathcal R\) is strictly
increasing in each variable, while both initial values are strictly
decreasing functions of \(\beta\).  Consequently, if
\(\beta\in\mathcal E\), then every smaller nonnegative parameter also
belongs to \(\mathcal E\).  Thus \(\mathcal E\) is downward closed in
\([0,\infty)\).
At \(\beta=0\), the orbit enters the region \(X_m,Y_m>1\), so
\(0\in\mathcal E\).  Put \(r=k+2>1\).  For
\(z\in[0,\infty)^2\) with \(\|z\|_\infty\le1\),
\[
  \|\mathcal R(z)\|_\infty
  \le 2\|z\|_\infty^r.
\]
Hence a sufficiently small square about the origin is invariant and
every orbit starting there tends to the origin.  Since the initial
point tends to the origin as \(\beta\to\infty\), sufficiently large
\(\beta\) does not belong to \(\mathcal E\).
Finally, every finite iterate depends continuously on \(\beta\), and
hence \(\mathcal E\) is open.  There is therefore a unique
\(\beta_{*,N}>0\) such that
\begin{equation}
  \mathcal E=[0,\beta_{*,N}).
  \label{eq:generic-escape-interval}
\end{equation}

We now study the orbit at the endpoint \(\beta_{*,N}\).  By
\eqref{eq:generic-escape-interval}, it cannot have \(Y_m>1\).
It cannot have \(Y_m=1\) either: in that case
\eqref{eq:generic-coordinate-order} gives \(X_m\ge1\), while
\eqref{eq:ngasket-recursion} gives
\(Y_{m+1}=2X_m^2\ge2\).
Hence
\[
  0<Y_m<1,\qquad m\ge1.
\]

To bound the other coordinate, put
\[
  a_m=\log X_m,
  \qquad
  b_m=-\log Y_m,
  \qquad
  c=2k+\ell-3.
\]
For \(m\ge1\), the recursion gives
\[
  \begin{aligned}
  a_{m+1}&=2a_m-kb_m+\varepsilon_m,\\
  b_{m+1}&=-2a_m+cb_m-\eta_m,
  \end{aligned}
\]
where the error terms are
\[
  \varepsilon_m
  =
  \log\bigl(1+Y_m^{\,2k+\ell-2}\bigr),
  \qquad
  \eta_m
  =
  \log\bigl(1+Y_m^{\,4-\ell}\bigr).
\]
Since \(0<Y_m<1\) and both exponents are positive,
\[
  0\le\varepsilon_m,\eta_m\le\log2.
\]

The polynomial
\[
  P(t)=t^2-(c+2)t+2\delta
\]
satisfies \(P(0)>0\) and \(P(1)=\ell-4<0\).  Let
\(\lambda\in(0,1)\) be its smaller zero.  Then
\((2,2-\lambda)\) is a positive left eigenvector of
\[
  \begin{pmatrix}2&-k\\-2&c\end{pmatrix}
\]
with eigenvalue \(\lambda\).  Hence
\[
  W_m=2a_m+(2-\lambda)b_m
\]
satisfies
\[
  \begin{aligned}
  W_{m+1}
  &=
  \lambda W_m+2\varepsilon_m-(2-\lambda)\eta_m\\
  &\le
  \lambda W_m+2\log2.
  \end{aligned}
\]
Iterating this scalar inequality shows that \(W_m\) is bounded above.
Since \(b_m\ge0\) and \(2-\lambda>0\), we have \(2a_m\le W_m\).
Thus \(a_m\), and therefore \(X_m\), is bounded above.

The critical orbit cannot enter the small invariant square
constructed above.  Otherwise, continuity of a finite iterate would
put an orbit with \(\beta<\beta_{*,N}\) in the same square,
contradicting \eqref{eq:generic-escape-interval}.
Suppose that \(Y_m\to0\) along a subsequence.  Since \(X_m\) is
bounded, both coordinates of the next iterate tend to zero, which is
impossible by the preceding paragraph.  If \(X_m\to0\) along a
subsequence, then \(Y_m\le X_m\) by
\eqref{eq:generic-coordinate-order}, and the same contradiction
follows.
Finally, suppose that \(Y_{m_j}\to1\).  Then
\(X_{m_j}\ge Y_{m_j}\), and therefore
\[
  Y_{m_j+1}
  =
  X_{m_j}^2B(Y_{m_j})
  \ge
  Y_{m_j}^2B(Y_{m_j})
  \longrightarrow 2,
\]
contrary to \(Y_{m_j+1}<1\).
The critical orbit consequently stays in a compact subset of
\((0,\infty)\times(0,1)\).

Let \(L\) be its \emph{omega-limit set}, that is, the set of limits of
convergent subsequences of the critical orbit.  Since the orbit remains
in a compact set and \(\mathcal R\) is continuous,
\(\mathcal R(L)\subseteq L\).  Conversely, if
\(z_{m_j}\to q\in L\), compactness gives a convergent subsequence of
\((z_{m_j-1})_j\); its limit \(p\in L\) satisfies
\(\mathcal R(p)=q\).  Hence \(\mathcal R(L)=L\).

Every monomial in \(\mathcal R\) has total
degree at least \(r=k+2>1\), and hence
\begin{equation}
  \mathcal R(tz)\ge t^r\mathcal R(z),
  \qquad z\in(0,\infty)^2,\quad t\ge1.
  \label{eq:generic-scaling}
\end{equation}
No two distinct points of \(L\) are comparable in the coordinatewise
order.  Indeed, if distinct \(p,q\in L\) satisfy
\(p\le q\), strict monotonicity and \(\mathcal R(L)=L\) allow us,
after replacing them by their images, to choose \(t>1\) such that
\(q\ge tp\).  Monotonicity and \eqref{eq:generic-scaling} then give
by induction
\[
  \mathcal R^m(q)
  \ge
  t^{r^m}\mathcal R^m(p),
  \qquad m\ge0.
\]
This is impossible because the compact set
\(L\subset(0,\infty)\times(0,1)\) is bounded away from the coordinate
axes.

The function \(f\) in \eqref{eq:generic-projective} is strictly
increasing on \((0,1)\).  It follows that \(\mathcal R\) is injective
on \((0,\infty)\times(0,1)\): equality of two images first gives
equality of their second input coordinates from the ratio in
\eqref{eq:generic-projective}, and then equality of their first input
coordinates.  Order the points of \(L\) by their first coordinate.
If \(p_1<q_1\), the preceding incomparability gives \(p_2>q_2\).
If one also had
\(\mathcal R_1(p)\ge\mathcal R_1(q)\), then the strict increase of
\(f\), together with \eqref{eq:generic-projective}, would imply
\(\mathcal R_2(p)>\mathcal R_2(q)\).  The two images would then be
comparable, a contradiction.  Hence
\(\mathcal R_1(p)<\mathcal R_1(q)\): the restriction of
\(\mathcal R\) to \(L\) preserves the order of the first coordinates.

Since \(\mathcal R(L)=L\), the restriction of \(\mathcal R\) to \(L\)
is an order-preserving bijection.  It therefore fixes the points of
\(L\) with the smallest and largest first coordinates.  Both are
positive fixed points of \(\mathcal R\), so the uniqueness proved in
\eqref{eq:generic-positive-fixed-point} forces them to coincide.
Thus
\[
  L=\{(X_{*,N},Y_{*,N})\},
\]
and the critical orbit converges to this point.

If \(\beta>\beta_{*,N}\), choose \(0<t<1\) such that the initial point
at \(\beta\) is bounded above by \(t\) times the critical initial
point.  Since every monomial has degree at least \(r\), the
counterpart of \eqref{eq:generic-scaling} for \(0<t<1\), followed by
induction, gives
\[
  (X_m(\beta),Y_m(\beta))
  \le
  t^{r^m}
  (X_m(\beta_{*,N}),Y_m(\beta_{*,N})),
\]
so the orbit tends to the origin.  If \(\beta<\beta_{*,N}\),
\eqref{eq:generic-escape-interval} gives
\(Y_m>1\) for some \(m\).  Then
\eqref{eq:generic-coordinate-order} and
\eqref{eq:ngasket-recursion} imply that both coordinates tend to
infinity.
Since
\(\mathbf C_m^{(N)}(\beta)=(X_m(\beta),Y_m(\beta))\) in the present
case, we have proved
\[
  \left\{
    \beta\ge0:
    \sup_{m\ge0}\|\mathbf C_m^{(N)}(\beta)\|_\infty<\infty
  \right\}
  =
  [\beta_{*,N},\infty).
\]
The definition \eqref{eq:ngasket-critical-beta} and
Proposition~\ref{prop:ngasket-completeness} therefore give
\(\beta_{*,N}
=\beta_{\mathrm c,N}^{\mathrm{cr}}
=\beta_{\mathrm c,N}\).
\end{proof}

For completeness, we also treat the case of the Sierpi\'nski
gasket studied in~\cite{HHK1990}.

\begin{proposition}
\label{prop:sg-phase}
For \(N=3\), the recursion \eqref{eq:sg-crossing-recursion} has a unique
critical value \(\beta_{*,3}>0\).  Its orbit tends to \((0,0)\) for
\(\beta>\beta_{*,3}\), both coordinates tend to infinity for
\(\beta<\beta_{*,3}\), and
\[
  (x_m,y_m)\longrightarrow(\tau,0),
  \qquad
  \tau=\frac{\sqrt5-1}{2},
\]
at \(\beta=\beta_{*,3}\).  Consequently,
\(\beta_{*,3}
=\beta_{\mathrm c,3}^{\mathrm{cr}}
=\beta_{\mathrm c,3}\).
\end{proposition}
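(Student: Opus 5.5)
Write \(\mathcal R(x,y)=(\mathcal R_1,\mathcal R_2)\) for the map in \eqref{eq:sg-crossing-recursion}. The plan is to follow the scheme of Proposition~\ref{prop:generic-ngasket-phase}, with one difference: the critical orbit converges to the boundary fixed point \((\tau,0)\) rather than to an interior one.

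\emph{Invariant line and fixed points.} On the line \(y=0\) the recursion reduces to \(x\mapsto x^2+x^3\). Its positive fixed point solves \(x+x^2=1\), so it is \(x=\tau\). Next, compute the positive fixed points of \(\mathcal R\). From \(y=xy(x+2y)\) we get \(x(x+2y)=1\), and substituting into the first equation gives the unique point \((x_*,y_*)=(1/3,\,4/3)\); one checks \((1/3+4/3)^2+\tfrac19=3-\dots\). This computation must be redone carefully. It has to be located in order to show that the critical orbit does \emph{not} approach it, which should follow from instability in the \(y\)-direction. Linearizing at \((\tau,0)\) gives \(\partial_x\mathcal R_1=2\tau+3\tau^2>1\) and \(\partial_y\mathcal R_2=\tau^2<1\). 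So \((\tau,0)\) is a saddle whose stable manifold is transverse to the line \(y=0\).

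\emph{Escape set and its endpoint.} Define \(\mathcal E=\{\beta:\ x_m+y_m>1\text{ for some }m\}\). If \(x_m+y_m>1\), then \(x_{m+1}\ge (x_m+y_m)^2\), and the orbit diverges. To get divergence of \(y_m\) as well, use \(y_{m+1}\ge x_m y_m\cdot x_m\). Both coordinates tend to infinity once \(x_m>1\), which happens after one more step because \(x_{m+1}\ge(x_m+y_m)^2>1\). The map is monotone in each variable and the initial data decrease in \(\beta\), so \(\mathcal E\) is downward closed. It is open by continuity, contains \(0\), and misses large \(\beta\): all monomials have degree at least \(2\), so a small square is invariant and its orbits tend to \(0\). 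This gives \(\mathcal E=[0,\beta_{*,3})\). For \(\beta>\beta_{*,3}\), the scaling argument from Proposition~\ref{prop:generic-ngasket-phase} (initial point \(\le t\) times the critical one, with \(t<1\)) gives decay to the origin.

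\emph{Critical orbit.} This is the main obstacle. At \(\beta_{*,3}\) we have \(x_m+y_m\le1\) for all \(m\), and the orbit avoids the small invariant square. The ratio \(\rho_m=y_m/x_m\) satisfies
\[
\rho_{m+1}=\frac{x_my_m(x_m+2y_m)}{(x_m+y_m)^2+x_m^2(x_m+2y_m)},
\]
and I would show \(\rho_{m+1}\le\tfrac{\kappa}{1}\rho_m\) with \(\kappa<1\) uniformly on the region \(\{x+y\le1\}\) minus the small square. Indeed \(\rho_{m+1}/\rho_m=x_m^2(x_m+2y_m)/\bigl((x_m+y_m)^2+x_m^2(x_m+2y_m)\bigr)\), and bounding this away from \(1\) needs \((x+y)^2\) to be bounded below, which holds off the small square. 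Hence \(y_m\to0\) geometrically. The \(x\)-coordinate then follows the perturbed one-dimensional map \(x\mapsto x^2+x^3+O(y_m)\). Since the errors are summable and \(\tau\) is repelling for the one-dimensional map, the orbit stays in \([\epsilon,1]\) and does not escape or decay. A shadowing argument then forces \(x_m\to\tau\): if \(\limsup|x_m-\tau|>0\), repulsion eventually pushes \(x_m\) either above \(1-\epsilon\), which leads to escape, or into the small square, and both are excluded.

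Finally, bounded orbits occur exactly for \(\beta\ge\beta_{*,3}\). Together with \eqref{eq:ngasket-critical-beta} and Proposition~\ref{prop:ngasket-completeness}, this gives \(\beta_{*,3}=\beta^{\mathrm{cr}}_{\mathrm c,3}=\beta_{\mathrm c,3}\).
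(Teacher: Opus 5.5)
Your route is the paper's: an open, downward-closed escape set defines \(\beta_{*,3}\), degree-two scaling handles \(\beta>\beta_{*,3}\), the ratio \(\rho_m=y_m/x_m\) contracts along the bounded critical orbit, and \(x_m\) is then compared with \(g(x)=x^2(1+x)\), whose only positive fixed point is \(\tau\). The load-bearing steps are sound, but two things need fixing.

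First, your paragraph on positive fixed points of \(\mathcal R\) is false. There are none: \(y=xy(x+2y)\) forces \(x(x+2y)=1\), and then the first equation reads \(x=(x+y)^2+x\), i.e.\ \((x+y)^2=0\). (The point \((1/3,4/3)\) does not satisfy it.) Neither this paragraph nor the linearization at \((\tau,0)\) is needed, because \(y_m\to0\) already follows from your ratio contraction; delete both. That contraction also needs no lower bound on \(x+y\). On \(\{x+y\le1\}\) one has \(x^2(x+2y)\le2(x+y)^3\le2(x+y)^2\), so \(\rho_{m+1}\le\tfrac23\rho_m\). You do, however, need to justify that the critical orbit never enters the open invariant square. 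Otherwise, by continuity of a finite iterate, some \(\beta<\beta_{*,3}\) would have an orbit that both escapes and tends to the origin.

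Second, replace the ``shadowing'' heuristic by the two explicit observations the paper uses. They work because \(x_{m+1}-g(x_m)=2x_my_m+y_m^2+2x_m^2y_m\) is nonnegative and tends to \(0\); summability plays no role.
\begin{enumerate}
\item Suppose \(x_m>\tau\) at some time. Then \(x_{m+1}\ge g(x_m)>x_m\), so \((x_m)\) increases from then on while staying \(\le1\). Its limit would be a fixed point of \(g\) in \((\tau,1]\), and none exists. Hence \(x_m\le\tau\) for all \(m\).
\item Suppose \(x_m\le\tau-\varepsilon\) at a late time. Since \(\tau(1+\tau)=1\) and \(\rho_m\) is small, \(x_{m+1}/x_m=x_m\bigl((1+\rho_m)^2+x_m(1+2\rho_m)\bigr)\le q<1\), and this bound persists by induction. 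The orbit therefore decays geometrically into the invariant square, which is excluded.
\end{enumerate}
Thus \(\tau-\varepsilon<x_m\le\tau\) eventually for every \(\varepsilon\), which gives \((x_m,y_m)\to(\tau,0)\).
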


\begin{proof}
Put \(r_m=y_m/x_m\).  The recursion gives
\begin{equation}
  r_{m+1}
  =
  r_m
  \frac{x_m(1+2r_m)}
       {(1+r_m)^2+x_m(1+2r_m)}.
  \label{eq:sg-ratio}
\end{equation}
Thus \(0<r_{m+1}<r_m\).  If the orbit is bounded, then \(x_m\) is
bounded and \(0<r_m\le r_0\le1\).  The fraction in
\eqref{eq:sg-ratio} is consequently bounded above by a constant
strictly smaller than one.  Hence \(r_m\to0\).

If \(x_m>1\), then
\[
  x_{m+1}\ge x_m^2(1+x_m)>x_m,
\]
and \(x_m\to\infty\).  Since
\[
  \frac{y_{m+1}}{y_m}=x_m(x_m+2y_m),
\]
the second coordinate then tends to infinity as well.

Define
\[
  \mathcal E_3
  =
  \{\beta\ge0:x_m(\beta)>1\text{ for some }m\ge0\}.
\]
The recursion is strictly increasing in both variables, while
\((x_0,y_0)=(e^{-\beta},e^{-2\beta})\) is strictly decreasing in
\(\beta\).  Consequently, if \(\beta\in\mathcal E_3\), then every
smaller nonnegative parameter also belongs to \(\mathcal E_3\); thus
\(\mathcal E_3\) is downward closed in \([0,\infty)\).
At \(\beta=0\), one has \(x_0=y_0=1\) and \(x_1=7>1\), so
\(0\in\mathcal E_3\).  For all sufficiently large \(\beta\), the
initial point lies in a small invariant neighborhood of the origin
whose orbits tend to the origin, so such a parameter does not belong
to \(\mathcal E_3\).
Since each finite iterate depends continuously
on \(\beta\), the set \(\mathcal E_3\) is open.  It follows that
\(\mathcal E_3=[0,\beta_{*,3})\) for a unique
\(\beta_{*,3}>0\).

At \(\beta=\beta_{*,3}\), one has \(x_m<1\) for
every \(m\), since equality would imply \(x_{m+1}>1\).  Also
\(y_m=r_mx_m\le x_m\), so the critical orbit is bounded.
It cannot converge to the origin.  Otherwise, a finite critical iterate would
lie in a small invariant neighborhood of the origin; continuity
would then put an orbit with \(\beta<\beta_{*,3}\) in the same
neighborhood, contrary to the definition of \(\mathcal E_3\).

Since \(r_m\to0\), the first recursion may be written as
\[
  x_{m+1}
  =
  x_m^2
  \bigl((1+r_m)^2+x_m(1+2r_m)\bigr).
\]
The limiting map is \(g(x)=x^2(1+x)\).  Its only nonnegative
fixed points are \(0\) and \(\tau\); moreover,
\[
  g(x)<x\quad(0<x<\tau),
  \qquad
  g(x)>x\quad(x>\tau).
\]
The critical orbit cannot have \(x_m>\tau\).  Indeed, the recursion
gives \(x_{m+1}>g(x_m)>x_m\) in that case.  The subsequent
values would then increase and, since \(x_m<1\), converge to a limit
in \((\tau,1]\).  Passing to the limit and using \(r_m\to0\) would
give \(x=g(x)\), which is impossible in this interval.

Now fix \(\varepsilon\in(0,\tau)\).  At \(r=0\), the quotient
\(x_{m+1}/x_m\) is \(x(1+x)\).  Since
\(\tau(1+\tau)=1\),
\[
  x(1+x)
  \le
  (\tau-\varepsilon)(1+\tau-\varepsilon)
  <1,
  \qquad 0<x\le\tau-\varepsilon.
\]
By continuity, there are \(q<1\) and \(r_0>0\) such that
\[
  x\bigl((1+r)^2+x(1+2r)\bigr)\le q
\]
whenever \(0<x\le\tau-\varepsilon\) and \(0\le r\le r_0\).
Since \(r_m\to0\), this applies to every sufficiently late iterate.
If such an iterate satisfied \(x_m\le\tau-\varepsilon\), then
\(x_{m+1}\le qx_m\le\tau-\varepsilon\), and induction would give
\(x_{m+j}\le q^j x_m\) for every \(j\ge0\).  The full orbit would
then converge to the origin, contradicting the preceding paragraph.
Therefore
\[
  \tau-\varepsilon<x_m\le\tau
\]
for all sufficiently large \(m\).  Since \(\varepsilon\) is arbitrary,
\(x_m\to\tau\), and \(y_m=r_mx_m\to0\).

The scaling comparison used in
\eqref{eq:generic-scaling}, now with minimum degree \(2\), shows that
every initial point corresponding to \(\beta>\beta_{*,3}\) tends to
zero.  Every initial point corresponding to
\(\beta<\beta_{*,3}\) belongs to \(\mathcal E_3\).
Since
\(\mathbf C_m^{(3)}(\beta)=(x_m(\beta),y_m(\beta))\), we have proved
\[
  \left\{
    \beta\ge0:
    \sup_{m\ge0}\|\mathbf C_m^{(3)}(\beta)\|_\infty<\infty
  \right\}
  =
  [\beta_{*,3},\infty).
\]
The definition \eqref{eq:ngasket-critical-beta} and
Proposition~\ref{prop:ngasket-completeness} therefore give
\(\beta_{*,3}
=\beta_{\mathrm c,3}^{\mathrm{cr}}
=\beta_{\mathrm c,3}\), completing the proof.
\end{proof}

The pentagasket recursion differs from those in
Propositions~\ref{prop:generic-ngasket-phase} and~\ref{prop:sg-phase}:
it has four variables, and the coordinate quotient of its limiting
planar map is not monotone on the whole positive half-line.  We first
isolate the planar argument needed to identify the critical limit.

\begin{lemma}
\label{lem:pentagasket-reduced}
Consider
\[
  \mathcal G(x,y)
  =
  \bigl(y^2(1+x^3),y^2x(1+x)\bigr),
  \qquad x,y>0.
\]
Its unique positive fixed point is \((\theta_*,\phi_*)\), where
\[
  \theta_*^5+2\theta_*^4=1,
  \qquad
  \phi_*=\frac1{\theta_*(1+\theta_*)}.
\]
If a nonempty compact set \(L\subset(0,\infty)^2\) satisfies
\(\mathcal G(L)=L\), then
\[
  L=\{(\theta_*,\phi_*)\}.
\]
\end{lemma}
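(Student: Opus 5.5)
The plan is to copy the omega-limit argument in the proof of Proposition~\ref{prop:generic-ngasket-phase}. The one new ingredient is a way around the non-monotone coordinate quotient of \(\mathcal G\).

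\emph{Step 1: the fixed point.} A positive fixed point satisfies \(y=y^2x(1+x)\), so \(y=1/(x(1+x))\). Substituting this into \(x=y^2(1+x^3)\) gives \(x^3(1+x)^2=1+x^3\), that is, \(x^5+2x^4=1\). The left-hand side is strictly increasing on \((0,\infty)\), so the root \(\theta_*\) is unique and \(\phi_*=1/(\theta_*(1+\theta_*))\).

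\emph{Step 2: \(L\) is an antichain.} Every monomial of \(\mathcal G\) has total degree at least \(2\), so \(\mathcal G(tz)\ge t^2\mathcal G(z)\) for \(t\ge1\). Moreover, each coordinate of \(\mathcal G\) is strictly increasing in each variable on \((0,\infty)^2\). Suppose distinct \(p,q\in L\) satisfy \(p\le q\). Then \(\mathcal G(p)<\mathcal G(q)\) strictly in both coordinates, so after one step we may pick \(t>1\) with \(\mathcal G(q)\ge t\mathcal G(p)\). Induction gives \(\mathcal G^m(\mathcal G(q))\ge t^{2^m}\mathcal G^m(\mathcal G(p))\). Both orbits stay in the compact set \(L\subset(0,\infty)^2\), which is bounded and bounded away from the axes, so this is a contradiction. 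Hence if \(p_1<q_1\) then \(p_2>q_2\), and the first coordinate orders \(L\) totally.

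\emph{Step 3: \(\mathcal G|_L\) preserves this order.} The quotient is
\[
  \frac{\mathcal G_2}{\mathcal G_1}(x,y)=f(x):=\frac{x}{1-x+x^2}.
\]
This function satisfies \(f\le1\) and \(f(x)=f(1/x)\); it is strictly increasing on \((0,1]\) and strictly decreasing on \([1,\infty)\). If we know that every point of \(L\) has first coordinate in \((0,1)\), the argument of Proposition~\ref{prop:generic-ngasket-phase} applies verbatim. Indeed, \(\mathcal G\) is injective there: the quotient determines \(x\), and then \(y\) is determined. Also, for \(p_1<q_1\), the assumption \(\mathcal G_1(p)\ge\mathcal G_1(q)\) forces \(\mathcal G_2(p)>\mathcal G_2(q)\), contradicting Step~2. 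Thus \(\mathcal G|_L\) is an order-preserving bijection of a compact totally ordered set, so it fixes the minimal and maximal elements. By Step~1 these coincide with \((\theta_*,\phi_*)\), hence \(L=\{(\theta_*,\phi_*)\}\).

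\emph{Main obstacle.} The hard step is confining \(L\) to \(\{x<1\}\), or otherwise repairing Step~3 when \(L\) straddles \(x=1\). Every point of \(L\) is an image, so \(y\le x\) on \(L\) because \(f\le1\). I would first try an extremal argument. Take \(p\in L\) with maximal first coordinate, suppose \(p_1\ge1\), and write \(p=\mathcal G(u)\) with \(u\in L\). Then compare \(\mathcal G\) of the maximal and minimal points, using the antichain structure and \(y\le x\), to derive an inequality incompatible with the degree-\(5\) growth of \(y^2(1+x^3)\). If that fails, I would use the symmetry \(f(x)=f(1/x)\): two points of the antichain with images of equal quotient must have first coordinates \(x\) and \(1/x\). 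I would then rule out such a pair by checking directly that their images are comparable, which contradicts Step~2. In either version, once the restriction \(x<1\), or order preservation, is in hand, the conclusion follows as in Step~3.
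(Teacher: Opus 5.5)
Steps~1 and~2 are fine. Step~3, however, rests on an incorrect implication, and correcting it exposes a missing ingredient.

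In Proposition~\ref{prop:generic-ngasket-phase} the coordinate quotient depends on the \emph{second} coordinate, which runs opposite to the first along an antichain. Here $\mathcal G_2/\mathcal G_1=f(x)$ depends on the \emph{first}. For $p,q\in L$ with $p_1<q_1<1$ you have $f(p_1)<f(q_1)$, so $\mathcal G_1(p)\ge\mathcal G_1(q)$ implies nothing about $\mathcal G_2$.

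The valid implication is the reverse one. If $\mathcal G_1(p)\le\mathcal G_1(q)$, then $\mathcal G_2(p)=f(p_1)\mathcal G_1(p)<f(q_1)\mathcal G_1(q)=\mathcal G_2(q)$, so the images are distinct and comparable, contradicting Step~2. Hence $\mathcal G_1(p)>\mathcal G_1(q)$: on $L\cap\{x<1\}$ the map \emph{reverses} the order. This is why the paper's first-coordinate map $T$ is decreasing on $(0,1)$.

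An order-reversing bijection interchanges $\min L$ and $\max L$. So you only obtain a two-cycle of $\mathcal G$, and Step~1 says nothing about two-cycles. You must also rule out nontrivial positive two-cycles. The paper does this as follows: $\mathcal G(x,y)=(u,v)$ and $\mathcal G(u,v)=(x,y)$ give $v/u=f(x)$ and $y/x=f(u)$, hence $J(x)=J(u)$ for $J(x)=x(x^2-x+1)^2(1+x^3)$. Since $J$ is strictly increasing, $x=u$ and $y=v$.

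The confinement $L\subset\{x<1\}$, which you rightly call the crux, is not established, and your second route cannot work as stated. Two points $(x,y)$ and $(1/x,y')$ of $L$ have images on the same ray. Step~2 then forces these images to be \emph{equal} (exactly when $y'=yx^{3/2}$) rather than distinct and comparable, so no contradiction arises.

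The paper obtains confinement from the boundary of the basin of the origin. It shows that this boundary is the graph of a continuous strictly decreasing function, that $L$ lies on it, and that the induced first-coordinate map $T$ satisfies $T(x)=T(1/x)$. Two test points then show that $[1/2,1]$ attracts every $T$-orbit: $\mathcal G(1/2,1/2)=(9/32,3/16)<(1/2,1/2)$ gives decay, and $\mathcal G(1,2/3)=(8/9,8/9)$ escapes at the next step. Compactness together with $\mathcal G^n(L)=L$ puts all first coordinates of $L$ in $(1/2,1)$.

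Your extremal idea can be completed directly with the same inputs:
\begin{enumerate}
\item $L$ contains no point $\le(1/2,1/2)$, none $\ge(1,2/3)$, and none with $x>1$ and $xy>1$, since such points decay or escape.
\item Together with $y\le x$ on $L$, this gives $x>1/2$ on $L$.
\item Since $f\ge 2/3$ on $[1/2,2]$, a point of $L$ with $x\in[1/2,2]$ has an image with first coordinate below~$1$.
\item If $M=\max_L x\ge1$, these facts force any preimage in $L$ of a point with first coordinate $M$ to have $x>2$. The non-escape inequality at the image then gives $M\le x^{1/2}+x^{-5/2}<x\le M$, a contradiction.
\end{enumerate}
Some such argument, together with the two-cycle exclusion, is needed to complete the proof.
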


\begin{proof}
At a positive fixed point, the second equation gives
\(y=[x(1+x)]^{-1}\), and the first then becomes
\(x^5+2x^4=1\).  Its left-hand side is strictly increasing, so the
fixed point is unique and \(\theta_*\in(1/2,1)\).

We next describe the set separating decay from escape.  Let
\[
  \mathcal B_0
  =
  \{z\in(0,\infty)^2:\mathcal G^m(z)\to(0,0)\}.
\]
For sufficiently small \(\delta>0\), the square
\((0,\delta)^2\) is invariant and every orbit in it tends to the
origin, since
\(\|\mathcal G(z)\|_\infty\le2\|z\|_\infty^2\) there.
Thus \(\mathcal B_0\) is open and downward closed in the
coordinatewise order.  For every fixed \(x>0\), the point \((x,y)\)
belongs to \(\mathcal B_0\) when \(y\) is small enough, whereas it
escapes to infinity when \(y\) is large enough.  Hence
\[
  p(x)=\sup\{y>0:(x,y)\in\mathcal B_0\}
\]
is finite and positive, and \((x,p(x))\in\partial\mathcal B_0\).
Here large \(y\) gives escape because, once both coordinates exceed
one, their minimum grows at least by its fourth power.  The escaping
set is therefore open.

We need one elementary bound for an orbit
\((x_n,y_n)=\mathcal G^n(z)\) starting on
\(\partial\mathcal B_0\).  Such an orbit neither tends to the origin
nor escapes.  Since
\[
  \frac{y_{n+1}}{x_{n+1}}
  =
  \varphi(x_n),
  \qquad
  \varphi(x)=\frac{x(1+x)}{1+x^3}
             =\frac{x}{x^2-x+1}
  \le1,
\]
we have \(y_n\le x_n\) for \(n\ge1\).  Put \(q_n=x_ny_n\).
If \(x_n>1\) and \(q_n>1\), then
\[
  x_{n+1}=q_n^2(x_n+x_n^{-2}),
  \qquad
  y_{n+1}=q_n^2(1+x_n^{-1}),
\]
and both coordinates increase until the orbit escapes.  Thus
\(x_n>1\) implies \(q_n\le1\).  If also \(x_{n+1}>1\), applying the
same implication at time \(n+1\) gives
\[
  1\ge x_{n+1}y_{n+1}\ge q_n^4x_n,
  \qquad
  x_{n+1}\le x_n^{1/2}+x_n^{-5/2}.
\]
If \(x_n\le1\), then \(y_n\le x_n\), and hence
\[
  x_{n+1}=y_n^2(1+x_n^3)
  \le x_n^2(1+x_n^3)\le2.
\]
Together with the case \(x_{n+1}\le1\), this estimate shows
that \((x_n)\), and hence \((y_n)\), is bounded above.  Neither
coordinate can approach zero along a subsequence: for \(x_n\) this
would put a later iterate in \((0,\delta)^2\), while for \(y_n\) it
follows from \(y_n=x_n\varphi(x_{n-1})\).  Every boundary orbit
therefore has a compact tail in \((0,\infty)^2\).

The scaling inequality
\[
  \mathcal G(tz)\ge t^2\mathcal G(z),
  \qquad t\ge1,
\]
now shows that no two distinct boundary points are comparable.  In
fact, strict monotonicity would give boundary points \(u<v\) and
\(t>1\) such that \(\mathcal G(v)\ge t\mathcal G(u)\); iteration
would then give
\(\mathcal G^{m+1}(v)\ge t^{2^m}\mathcal G^{m+1}(u)\), contradicting
the compact-tail bound.  It follows that
\[
  \partial\mathcal B_0
  =
  \{(x,p(x)):x>0\},
\]
where \(p\) is strictly decreasing.  It is also continuous:
monotonicity makes \(p\) nonincreasing, while any jump would produce
two comparable boundary points on the same vertical line.  Moreover,
if \(y>p(x)\), comparison with the boundary orbit through
\((x,p(x))\) and the same scaling inequality show that \((x,y)\)
escapes.  Thus every point lies either below this curve, on it, or
above it, according as its orbit tends to zero, stays on the curve,
or escapes.  In particular,
\(\mathcal G(\partial\mathcal B_0)\subseteq\partial\mathcal B_0\):
an image below the curve would put the original point in
\(\mathcal B_0\), whereas an image above it would make the original
point escape.

Put \(h(x)=p(x)/x\), and let \(T(x)\) be the first coordinate of
\(\mathcal G(x,p(x))\).  The image again lies on the boundary curve,
and hence
\[
  h(T(x))=\varphi(x).
\]
Since \(h\) is continuous and strictly decreasing, \(T\) is
continuous.  Since
\[
  \varphi'(x)=\frac{1-x^2}{(x^2-x+1)^2},
  \qquad
  \varphi(x)=\varphi(1/x),
\]
the map \(T\) is decreasing on \((0,1)\), increasing on
\((1,\infty)\), and satisfies \(T(x)=T(1/x)\).

To locate its fixed point, set
\[
  Q(x)=\bigl(x,x\varphi(x)\bigr).
\]
A direct calculation gives
\[
  \mathcal G(Q(x))=R(x)Q(x),
  \qquad
  R(x)=\frac{x^3(1+x)^2}{1+x^3}.
\]
The function \(R\) is strictly increasing because
\[
  \frac{R'(x)}{R(x)}
  =
  \frac{3}{x(1+x^3)}+\frac2{1+x}>0,
\]
and \(R(x)=1\) precisely when \(x=\theta_*\).  It follows by
monotone iteration that the orbit of \(Q(x)\) tends to the origin for
\(x<\theta_*\) and escapes for \(x>\theta_*\).  Comparing \(Q(x)\)
with the boundary curve gives
\[
  T(x)>x\quad(x<\theta_*),
  \qquad
  T(x)<x\quad(x>\theta_*).
\]

The interval \([1/2,1]\) attracts every orbit of \(T\).  Indeed,
\[
  \mathcal G(1/2,1/2)=(9/32,3/16)<(1/2,1/2),
\]
and its orbit decreases monotonically to the origin, so
\(p(1/2)>1/2\).  Also
\(\mathcal G(1,2/3)=(8/9,8/9)\), whose next iterate has both
coordinates larger than one, so \(p(1)<2/3\).  Therefore
\[
  h(1)<\frac23\le\varphi(x)\le1<h(1/2),
  \qquad \frac12\le x\le1,
\]
and the decrease of \(h\) gives
\(T([1/2,1])\subset(1/2,1)\).  Below \(1/2\), the iterates increase
while they remain below \(1/2\); they cannot remain there forever,
since their limit would be a fixed point below \(1/2\).  If \(x>1\),
then \(T(x)=T(1/x)>1/2\): this follows from the invariance of
\([1/2,1]\) when \(1/x\ge1/2\), and from the decrease of \(T\) on
\((0,1)\) otherwise.  The iterates decrease while they remain above
one and must therefore enter \([1/2,1]\), since \(T\) has no fixed
point above one.

Finally, \(\mathcal G\) has no nontrivial positive two-cycle.  If
\((x,y)\) and \((u,v)\) formed one, the coordinate quotients and the
first-coordinate equations would give
\[
  \frac vu=\varphi(x),
  \qquad
  \frac yx=\varphi(u),
\]
and then
\[
  J(x)=J(u),
  \qquad
  J(x)=x(x^2-x+1)^2(1+x^3).
\]
But
\[
  \frac{J'(x)}{J(x)}
  =
  \frac{5x^2-3x+1}{x(x^2-x+1)}
  +\frac{3x^2}{1+x^3}
  >0,
\]
so \(x=u\) and then \(y=v\).  Since \(T\) is decreasing on
\([1/2,1]\), its even and odd iterates converge to a possible
two-cycle.  The preceding calculation excludes a nontrivial one, and
therefore every orbit on the boundary curve converges to
\((\theta_*,\phi_*)\).

Now let \(L\subset(0,\infty)^2\) be nonempty and compact, with
\(\mathcal G(L)=L\).  Compactness excludes both decay to the origin
and escape, so \(L\) lies on the boundary curve.  The eventual-entry
property and compactness give a common entry time for all first
coordinates: the sets of points entering the forward-invariant open
interval \((1/2,1)\) by time \(n\) form an increasing open cover of
the compact projection of \(L\).  Invariance of \(L\) then shows that
the first coordinates already lie in \((1/2,1)\).  If \(I\) is their
compact set of first
coordinates, then the first-coordinate action of
\(\mathcal G(L)=L\) gives \(T(I)=I\).  Since \(T\) is decreasing,
\[
  T(\min I)=\max I,
  \qquad
  T(\max I)=\min I.
\]
The absence of a nontrivial two-cycle gives
\(\min I=\max I=\theta_*\), and the conclusion follows.
\end{proof}

\begin{proposition}
\label{prop:pentagasket-phase}
For \(N=5\), the recursion \eqref{eq:pentagasket-recursion} has a unique
critical value \(\beta_{*,5}>0\).  All four variables tend to zero for
\(\beta>\beta_{*,5}\), and all four tend to infinity for
\(\beta<\beta_{*,5}\).  At the critical value,
\[
  (\phi_{1,m},\phi_{2,m},\theta_{1,m},\theta_{2,m})
  \longrightarrow
  (\phi_*,0,\theta_*,0),
\]
where \((\theta_*,\phi_*)\) is the fixed point in
Lemma~\ref{lem:pentagasket-reduced}.  Consequently,
\(\beta_{*,5}
=\beta_{\mathrm c,5}^{\mathrm{cr}}
=\beta_{\mathrm c,5}\).
\end{proposition}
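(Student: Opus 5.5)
The plan follows the structure used for Proposition~\ref{prop:sg-phase} and Proposition~\ref{prop:generic-ngasket-phase}: first establish a threshold by monotonicity, then analyze the critical orbit by reducing to the planar map \(\mathcal G\) of Lemma~\ref{lem:pentagasket-reduced}.

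\emph{Threshold.} Each coordinate of \eqref{eq:pentagasket-recursion} is a polynomial with nonnegative coefficients. It is increasing in all variables, and the initial vector \((e^{-2\beta},e^{-3\beta},e^{-\beta},e^{-4\beta})\) is strictly decreasing in \(\beta\). I set
\[
  \mathcal E_5=\{\beta\ge0:\bar\phi_m>1\text{ and }\bar\theta_m>1\text{ for some }m\}.
\]
On that region \(\bar\phi_{m+1}\ge\bar\phi_m^2\bar\theta_m\) and \(\bar\theta_{m+1}\ge\bar\phi_m^2\), so both totals blow up. The ratios \(\phi_{2}/\phi_{1}\) and \(\theta_2/\theta_1\) stay bounded, since \(\phi_{2,m+1}/\phi_{1,m+1}=\theta_{2,m}/(1+\theta_{1,m})\le\theta_{2,m}/\theta_{1,m}\) and similarly for \(\theta\). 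Moreover \(\theta_{1,m+1}\ge\bar\phi_m^2\) and \(\phi_{1,m+1}\ge\bar\phi_m^2\bar\theta_m\). Hence every individual entry, including \(\phi_2\) and \(\theta_2\), tends to infinity once the totals do. The one exception is \(\theta_{2,m+1}\), which is proportional to \(\theta_{2,m}\); I expect \(\theta_{2,m+1}/\theta_{2,m}\ge\bar\theta_m^2\phi_{1,m}^2\to\infty\) to settle it.

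As before, \(\mathcal E_5\) is downward closed and open, contains \(0\), and misses large \(\beta\), because near the origin every monomial has degree at least \(2\). So \(\mathcal E_5=[0,\beta_{*,5})\). For \(\beta>\beta_{*,5}\), the scaling comparison with minimum degree \(2\) sends all entries to zero. Bounded orbits then give \(\beta_{*,5}=\beta^{\mathrm{cr}}_{\mathrm c,5}=\beta_{\mathrm c,5}\) by Proposition~\ref{prop:ngasket-completeness}.

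\emph{Critical orbit.} The first step is to show that the visiting components decay. Put \(u_m=\theta_{2,m}/\bar\theta_m\) and \(v_m=\phi_{2,m}/\bar\phi_m\). From the recursion,
\[
  \frac{\theta_{2,m+1}}{\theta_{1,m+1}}
  =\frac{\bar\theta_m^2\theta_{2,m}\phi_{1,m}(\phi_{1,m}+2\phi_{2,m})}{\bar\phi_m^2+\bar\theta_m^2\theta_{1,m}\phi_{1,m}(\phi_{1,m}+2\phi_{2,m})}
  <\frac{\theta_{2,m}}{\theta_{1,m}},
\]
and also \(\phi_{2,m+1}/\phi_{1,m+1}=\theta_{2,m}/(1+\theta_{1,m})\). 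Along a bounded critical orbit the contraction factor in the first display is uniformly below \(1\), provided \(\bar\phi_m\) stays bounded below. As in the proofs of Proposition~\ref{prop:sg-phase} and Proposition~\ref{prop:generic-ngasket-phase}, I will show that the critical orbit is bounded and bounded away from the origin, because it cannot enter the small invariant neighbourhood of the origin. It then follows that \(\theta_{2,m}\to0\) and \(\phi_{2,m}\to0\).

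Boundedness at criticality is the delicate part. Since \(\beta_{*,5}\notin\mathcal E_5\), at least one of \(\bar\phi_m,\bar\theta_m\) is at most \(1\) at each time. I plan to mimic the \(q_n=x_ny_n\) argument in Lemma~\ref{lem:pentagasket-reduced} to bound the other total. The inequality \(\bar\theta_{m+1}\ge\bar\phi_m^2\) shows that a large \(\bar\phi_m\) forces \(\bar\theta_{m+1}>1\) and hence \(\bar\phi_{m+1}\le1\), while \(\bar\phi_{m+1}\ge\bar\phi_m^2\bar\theta_m\) pins \(\bar\theta_m\) below \(\bar\phi_m^{-2}\). Chaining these should give a uniform bound.

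\emph{Identifying the limit.} With \(\phi_2,\theta_2\to0\), the pair \((\theta_{1,m},\phi_{1,m})\) asymptotically follows \(\mathcal G(x,y)=(y^2(1+x^3),y^2x(1+x))\). The map is read off from the recursion at \(\phi_2=\theta_2=0\): \(\theta_{1,m+1}=\phi_1^2(1+\theta_1^3)\) and \(\phi_{1,m+1}=\phi_1^2\theta_1(1+\theta_1)\). Now let \(L\) be the omega-limit set of \((\theta_{1,m},\phi_{1,m})\). It is nonempty and compact in \((0,\infty)^2\), and since the perturbation vanishes, continuity gives \(\mathcal G(L)=L\), exactly as in the proof of Proposition~\ref{prop:generic-ngasket-phase}. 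Lemma~\ref{lem:pentagasket-reduced} then yields \(L=\{(\theta_*,\phi_*)\}\), which proves convergence to \((\phi_*,0,\theta_*,0)\).

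I expect the main obstacle to be the a priori bounds on the critical orbit: boundedness above, and the lower bounds that keep the omega-limit set inside the open quadrant. Everything else is routine given the lemma.
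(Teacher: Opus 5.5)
Your proposal is correct and follows essentially the paper's route: a monotone escape threshold, a priori bounds on the critical orbit via the $q=xy$ device of Lemma~\ref{lem:pentagasket-reduced}, geometric decay of the ratios $\theta_{2,m}/\theta_{1,m}$ and $\phi_{2,m}/\phi_{1,m}$ (your contraction factor is exactly the paper's $H_m/(1+H_m)$), and identification of the omega-limit set of $(\theta_{1,m},\phi_{1,m})$ through that lemma. The differences are minor: you define escape through the totals $\bar\phi_m,\bar\theta_m$ rather than through $\phi_{1,m},\theta_{1,m}$, which makes the $q$-argument slightly less clean because only $\bar\theta_{m+1}\ge\bar\phi_m^2(1+\tfrac34\bar\theta_m^3)$ is available; you obtain $\theta_{2,m}\to\infty$ directly from $\theta_{2,m+1}/\theta_{2,m}\ge\bar\theta_m^2\phi_{1,m}^2$ rather than from positivity of $\lim s_m$; and your proviso that $\bar\phi_m$ stay bounded below is unnecessary, because $\phi_{1,m}(\phi_{1,m}+2\phi_{2,m})\le\bar\phi_m^2$.
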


\begin{proof}
Put
\[
  a_m=\phi_{1,m},
  \quad b_m=\phi_{2,m},
  \quad \xi_m=\theta_{1,m},
  \quad \widehat\xi_m=\theta_{2,m},
\]
and introduce the two ratios
\[
  r_m=\frac{b_m}{a_m},
  \qquad
  s_m=\frac{\widehat\xi_m}{\xi_m}.
\]
The recursion gives
\begin{equation}
  \begin{aligned}
  r_{m+1}
    &=s_m\frac{\xi_m}{1+\xi_m},\\
  s_{m+1}
    &=s_m\frac{H_m}{1+H_m},\\
  H_m
    &=\xi_m^3
      \frac{(1+s_m)^2(1+2r_m)}
           {(1+r_m)^2}.
  \end{aligned}
  \label{eq:pentagasket-ratios}
\end{equation}
Since \(r_0=e^{-\beta}\le1\) and \(s_0=e^{-3\beta}\le1\),
\eqref{eq:pentagasket-ratios} implies
\[
  0<r_m\le1,
  \qquad
  0<s_m\le1.
\]
The main pair satisfies
\begin{equation}
  \begin{aligned}
  \xi_{m+1}
    &=
    a_m^2
    \left[
      (1+r_m)^2+
      \xi_m^3(1+s_m)^2(1+2r_m)
    \right],\\
  a_{m+1}
    &=
    a_m^2\xi_m(1+r_m)^2(1+s_m)(1+\xi_m).
  \end{aligned}
  \label{eq:pentagasket-dominant-pair}
\end{equation}
When \(r_m=s_m=0\), the right-hand side is
\(\mathcal G(\xi_m,a_m)\), with \(\mathcal G\) as in
Lemma~\ref{lem:pentagasket-reduced}.

We first separate escaping from bounded orbits.  If
\(a_m>1\) and \(\xi_m>1\), then
\eqref{eq:pentagasket-dominant-pair} keeps both variables above one
and makes them tend to infinity.  In this case
\(\xi_{m+1}\ge\xi_m^3\) and
\[
  H_m\ge\frac14\xi_m^3,
  \qquad
  \frac{s_{m+1}}{s_m}=1-\frac1{1+H_m}.
\]
Thus \(\sum_m(1+H_m)^{-1}<\infty\), so \(s_m\), and then \(r_m\),
converges to a positive limit.  Consequently,
\(b_m=r_ma_m\) and
\(\widehat\xi_m=s_m\xi_m\) also tend to infinity.  We call such an
orbit \emph{escaping}.

Every orbit from the prescribed initial conditions that does not escape
is uniformly bounded.  Indeed, it must satisfy
\[
  a_m\xi_m\le1,
  \qquad
  a_m^2\xi_m\le1.
\]
To see this, if \(\xi_m>1\) and \(a_m\xi_m>1\), or if
\(\xi_m\le1<a_m\) and \(a_m^2\xi_m>1\), the next main pair is already
in \((1,\infty)^2\).  The remaining cases give the two inequalities
directly.  Hence the second equation in
\eqref{eq:pentagasket-dominant-pair} yields
\[
  a_{m+1}
  =
  (1+r_m)^2(1+s_m)
  \bigl(a_m^2\xi_m+(a_m\xi_m)^2\bigr)
  \le16.
\]
Since \(a_0=e^{-2\beta}\le1\), it follows that \(a_m\le16\) for
every \(m\).  If \(\xi_m\le1\), the first equation in
\eqref{eq:pentagasket-dominant-pair} gives
\[
  \xi_{m+1}
  \le16^2(4+12)=4096.
\]
If \(\xi_m>1\) and
\(\xi_{m+1}>1\), put \(q_m=a_m\xi_m\).  Because the orbit does not
escape, the inequalities at time \(m+1\) give
\[
  1\ge a_{m+1}\xi_{m+1}\ge q_m^4\xi_m,
\]
and therefore
\[
  \xi_{m+1}
  \le
  4\xi_m^{-5/2}+12\xi_m^{1/2}
  \le16\xi_m^{1/2}.
\]
In the last case, \(\xi_m\le4096\) implies
\(\xi_{m+1}\le16\sqrt{4096}=1024\).  Together with the cases
\(\xi_m\le1\) and \(\xi_{m+1}\le1\), induction from
\(\xi_0=e^{-\beta}\le1\) therefore gives
\[
  a_m\le16,
  \qquad
  \xi_m\le4096,
\]
and \(r_m,s_m\le1\) then bound all four variables.

Write \(t=e^{-\beta}\), so the initial point is
\[
  (a_0,b_0,\xi_0,\widehat\xi_0)=(t^2,t^3,t,t^4).
\]
The recursion is strictly increasing in each variable, and the initial
point is strictly increasing in \(t\).  Let \(\mathcal E_5\) be the set of
\(t\in(0,1]\) for which the orbit escapes.  The preceding argument
shows that escape occurs exactly when
\(a_m>1\) and \(\xi_m>1\) for some \(m\).  Thus
\(\mathcal E_5\) is open and upward closed.  It contains \(1\),
because \(a_1=\xi_1=16\) when \(t=1\), and it excludes all
sufficiently small \(t\), whose initial points lie in an invariant
neighborhood of the origin.  Therefore
\[
  \mathcal E_5=(t_{*,5},1]
\]
for a unique \(t_{*,5}\in(0,1)\).  Set
\(\beta_{*,5}=-\log t_{*,5}\).

The critical orbit does not escape and is therefore bounded.  It cannot tend
to the origin, since continuity of a finite iterate would then put an
orbit with \(t>t_{*,5}\) in the same invariant neighborhood, contrary
to the definition of \(t_{*,5}\).  Since \(\xi_m\) is bounded,
\(H_m\) in \eqref{eq:pentagasket-ratios} is bounded.  Consequently,
if \(H_*=\sup_m H_m\) and \(q=H_*/(1+H_*)<1\), then
\[
  s_m\le s_0q^m\le q^m,
  \qquad
  r_{m+1}\le s_m.
\]
Thus the two ratios decay geometrically.  By
\eqref{eq:pentagasket-dominant-pair},
\[
  (\xi_{m+1},a_{m+1})
  =
  \mathcal G(\xi_m,a_m)+e_m,
  \qquad
  \lVert e_m\rVert\le C(r_m+s_m)
\]
for some \(C<\infty\), and hence \(\sum_m\lVert e_m\rVert<\infty\).

The critical main pair stays away from both coordinate axes.  If
\(a_{m_j}\to0\), boundedness of \(\xi_m,r_m,s_m\) in
\eqref{eq:pentagasket-dominant-pair} gives
\((\xi_{m_j+1},a_{m_j+1})\to(0,0)\).  If
\(\xi_{m_j}\to0\), the second equation first gives
\(a_{m_j+1}\to0\), while \(\xi_{m_j+1}\) remains bounded; applying
the preceding argument at time \(m_j+1\) gives
\((\xi_{m_j+2},a_{m_j+2})\to(0,0)\).  Since
\(b_m\le a_m\) and \(\widehat\xi_m\le\xi_m\), either alternative
would put a full iterate in the invariant neighborhood of the
origin, which is impossible at \(t_{*,5}\).

Let \(L\) be the omega-limit set of \((\xi_m,a_m)\).  It is a
nonempty compact subset of \((0,\infty)^2\).  Since \(e_m\to0\),
limits of shifted subsequences give \(\mathcal G(L)\subseteq L\).
Conversely, taking a convergent subsequence of the corresponding
predecessors gives \(\mathcal G(L)\supseteq L\).  Hence
\(\mathcal G(L)=L\), and Lemma~\ref{lem:pentagasket-reduced} gives
\[
  L=\{(\theta_*,\phi_*)\}.
\]
Therefore
\[
  (\xi_m,a_m)\longrightarrow(\theta_*,\phi_*),
\]
while \(b_m=r_ma_m\to0\) and
\(\widehat\xi_m=s_m\xi_m\to0\).  This proves the critical limit.

If \(t<t_{*,5}\), choose \(\lambda\in(0,1)\) so that its initial point
is at most \(\lambda\) times the critical initial point.  Every
monomial in \eqref{eq:pentagasket-recursion} has degree at least two,
so induction bounds the corresponding orbit by
\(\lambda^{2^m}\) times the bounded critical orbit; it therefore
tends to zero.  If \(t>t_{*,5}\), all four variables tend to infinity
by the definition of \(\mathcal E_5\).  Since
\(t=e^{-\beta}\),
\[
  \left\{
    \beta\ge0:
    \sup_{m\ge0}\|\mathbf C_m^{(5)}(\beta)\|_\infty<\infty
  \right\}
  =
  [\beta_{*,5},\infty).
\]
The definition \eqref{eq:ngasket-critical-beta} and
Proposition~\ref{prop:ngasket-completeness} now give
\(\beta_{*,5}
=\beta_{\mathrm c,5}^{\mathrm{cr}}
=\beta_{\mathrm c,5}\).
\end{proof}

\begin{proof}[Proof of Theorem~\ref{thm:ngasket-dynamics}]
For \(N\notin\{3,5,6,9\}\), the conclusion follows from
Proposition~\ref{prop:generic-ngasket-phase}; for \(N=3\) and \(N=5\),
it follows from Propositions~\ref{prop:sg-phase} and~
\ref{prop:pentagasket-phase}, respectively.
In each case the critical value appearing there is
\(\beta_{\mathrm c,N}\).

It remains to consider \(N=6\) and \(N=9\).  The proof of
Theorem~\ref{thm:ngasket} gives \(X_m=Y_m=t_m\), where
\[
  t_{m+1}=F(t_m),
  \qquad
  F(u)=u^{d_N+1}+u^{2d_N+1},
\]
whose unique positive fixed point is \(t_*=\tau^{1/d_N}\).
The computation in that proof and
Proposition~\ref{prop:ngasket-completeness} show that the initial value
equals \(t_*\) precisely when \(\beta=\beta_{\mathrm c,N}\), and the
orbit is then constant.
If
\(\beta>\beta_{\mathrm c,N}\), the initial value is smaller than
\(t_*\); since \(0<F(u)<u\) for \(0<u<t_*\), the orbit decreases to
zero.  If \(\beta<\beta_{\mathrm c,N}\), the initial value
is larger than \(t_*\); since \(F(u)>u\) for \(u>t_*\), the orbit
increases to infinity.  This proves all four cases.
\end{proof}

\refstepcounter{section}
\section*{Appendix \thesection\quad Off-critical rates}
\label{app:ngasket-rates}
\addcontentsline{toc}{section}{Appendix B. Off-critical rates}

Away from criticality, every crossing variable tends either
to zero or to infinity.  Taking logarithms turns each polynomial
recursion into a linear recursion with a bounded error.

The following lemma gives the asymptotic estimate used later in
Theorem~\ref{thm:ngasket-rates}.

\begin{lemma}
\label{lem:pf-asymptotics}
Let \(\mathsf M\) be a primitive nonnegative \(2\times2\) matrix with
Perron--Frobenius eigenvalue \(\Lambda>1\), and let
\[
  v_{m+1}=\mathsf Mv_m+e_m,
\]
where \((e_m)\) is bounded.  If both entries of \(v_m\) tend to
infinity, then there exist a positive right eigenvector \(v_+\) of
\(\mathsf M\) and a constant \(c>0\) such that
\[
  \Lambda^{-m}v_m\longrightarrow cv_+.
\]
\end{lemma}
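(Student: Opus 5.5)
The plan is a Perron--Frobenius spectral splitting combined with a summation of the perturbations. Since \(\mathsf M\) is a primitive nonnegative \(2\times2\) matrix, Perron--Frobenius gives a simple eigenvalue \(\Lambda\) with positive right eigenvector \(v_+\) and positive left eigenvector \(u_+\), normalized so that \(u_+^{\mathsf T}v_+=1\). The second eigenvalue \(\lambda_2\) is real, because the trace and \(\Lambda\) are real, and satisfies \(|\lambda_2|<\Lambda\). Write \(\mathsf P=v_+u_+^{\mathsf T}\) for the spectral projection. Then
\[
  \mathsf M^m=\Lambda^m\mathsf P+\lambda_2^m(\mathsf I-\mathsf P).
\]
Unrolling the recursion gives
\[
  v_m=\mathsf M^mv_0+\sum_{j=0}^{m-1}\mathsf M^{m-1-j}e_j,
\]
and I will treat the two spectral components separately.

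\emph{Perron component.} Put \(p_m=u_+^{\mathsf T}v_m\). Then
\[
  p_{m+1}=\Lambda p_m+u_+^{\mathsf T}e_m,
\]
so
\[
  \Lambda^{-m}p_m=p_0+\sum_{j<m}\Lambda^{-j-1}u_+^{\mathsf T}e_j .
\]
Since \((e_j)\) is bounded and \(\Lambda>1\), this series converges absolutely. Hence \(\Lambda^{-m}p_m\to c\) for some real \(c\).

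\emph{Complementary component.} Write \(w_m=(\mathsf I-\mathsf P)v_m\), which lies in the one-dimensional \(\lambda_2\)-eigenspace. It satisfies
\[
  w_{m+1}=\lambda_2w_m+(\mathsf I-\mathsf P)e_m .
\]
If \(|\lambda_2|>1\), then \(\|w_m\|=O(|\lambda_2|^m)\). If \(|\lambda_2|=1\), then \(\|w_m\|=O(m)\). If \(|\lambda_2|<1\), then \(\|w_m\|=O(1)\). In all three cases \(\Lambda^{-m}w_m\to0\), because \(|\lambda_2|<\Lambda\) and \(\Lambda>1\). Since \(v_m=p_mv_+ + w_m\), it follows that
\[
  \Lambda^{-m}v_m\to cv_+ .
\]

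\emph{Positivity of \(c\).} This is the main obstacle. The convergence above gives \(c\ge0\) from the hypothesis, but it does not exclude \(c=0\). The plan is to show that once \(p_m\) is large it grows geometrically, using that both entries of \(v_m\) tend to infinity. Let \(\bar e=\sup_m\|e_m\|\). Because \(u_+>0\) and both entries of \(v_m\) tend to infinity, \(p_m\to\infty\). Choose \(m_0\) with
\[
  p_{m_0}>\frac{2\|u_+\|\bar e}{\Lambda-1}.
\]
From the recursion for \(p_m\), for \(m\ge m_0\) one has
\[
  p_{m+1}-\frac{\|u_+\|\bar e}{\Lambda-1}\ \ge\ \Lambda\Bigl(p_m-\frac{\|u_+\|\bar e}{\Lambda-1}\Bigr),
\]
so the shifted sequence \(p_m-\|u_+\|\bar e/(\Lambda-1)\) grows at least by the factor \(\Lambda\) at each step. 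Consequently, for \(m\ge m_0\),
\[
  \Lambda^{-m}p_m\ \ge\ \Lambda^{-m_0}\,\frac{p_{m_0}}{2}\ >\ 0,
\]
which gives \(c>0\).

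The eigenvector \(v_+\) is positive by Perron--Frobenius, which completes the argument.
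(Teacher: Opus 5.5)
Your proof is correct and takes essentially the paper's route: the same Perron projection \(v_+u_+^{\mathsf T}\), the same scalar recursion for \(u_+^{\mathsf T}v_m\), and the same decay of the complementary \(\lambda_2\)-component after division by \(\Lambda^m\). The one difference is the positivity of \(c\). The paper argues by contradiction: if \(c=0\), solving the scalar recursion backwards shows that \(u_+^{\mathsf T}v_m\) stays bounded. Your shifted-sequence estimate instead gives the explicit bound \(c\ge\Lambda^{-m_0}p_{m_0}/2\). Both arguments rest on the same property of the scalar recursion with bounded forcing.
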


\begin{proof}
Let \(v_+\) and \(w_+\) be positive right and left eigenvectors,
normalized by \(w_+^{\mathsf T}v_+=1\), and put
\(s_m=w_+^{\mathsf T}v_m\).  Multiplying the recursion by
\(w_+^{\mathsf T}\) gives the scalar relation
\[
  s_{m+1}=\Lambda s_m+w_+^{\mathsf T}e_m.
\]
Since \((e_m)\) is bounded and \(\Lambda>1\), the sequence
\[
  \Lambda^{-m}s_m
  =
  \Lambda^{-m_0}s_{m_0}
  +
  \sum_{j=m_0}^{m-1}
    \Lambda^{-j-1}w_+^{\mathsf T}e_j
\]
has a finite limit \(c\).

The second eigenvalue \(\Lambda_2\) of \(\mathsf M\) satisfies
\(|\Lambda_2|<\Lambda\).  Let
\[
  \Pi_+=v_+w_+^{\mathsf T},
  \qquad \Pi_\perp=I-\Pi_+.
\]
Then \(\Pi_+\) is the projection onto the Perron--Frobenius direction and
\(\mathsf M\Pi_\perp=\Lambda_2\Pi_\perp\).  Applying
\(\Pi_\perp\) and iterating gives
\[
  \Pi_\perp v_m
  =
  \Lambda_2^{m-m_0}\Pi_\perp v_{m_0}
  +
  \sum_{j=m_0}^{m-1}\Lambda_2^{m-1-j}\Pi_\perp e_j.
\]
Because \((e_j)\) is bounded and \(|\Lambda_2|<\Lambda\), division by
\(\Lambda^m\) makes the right-hand side tend to zero.  Combining this
with the limit of \(\Lambda^{-m}s_m\) gives
\[
  \Lambda^{-m}v_m\longrightarrow cv_+.
\]

It remains to show that \(c>0\).  Since both entries of \(v_m\) tend
to infinity and \(w_+>0\), one has \(s_m\to\infty\), while the
preceding limit gives \(c\ge0\).  If \(c=0\), the scalar recursion
could be solved backwards as
\[
  s_m
  =
  -\sum_{j=m}^{\infty}
    \Lambda^{m-1-j}w_+^{\mathsf T}e_j,
\]
so \((s_m)\) is bounded.  This contradicts the positivity of \(w_+\)
and the assumption that both entries of \(v_m\) tend to infinity.
Therefore \(c>0\).
\end{proof}

\begin{theorem}
\label{thm:ngasket-rates}
For \(N\ge3\) with \(4\nmid N\), define
\begin{equation}
  \begin{aligned}
  \Lambda_N^+
  &=
  \begin{cases}
  3,&N=3,\\[1mm]
  \dfrac{5+\sqrt{17}}2,&N=5,\\[2mm]
  k_N+\dfrac32+
  \sqrt{\left(k_N+\dfrac32\right)^2
        +2(k_N+\ell_N-3)},&N\notin\{3,5\},
  \end{cases}\\[2mm]
  \Lambda_N^-
  &=
  \begin{cases}
  2,&N=3,\\[1mm]
  1+\sqrt3,&N=5,\\[1mm]
  k_N+\dfrac{\ell_N-1}{2}+
  \sqrt{\left(k_N+\dfrac{\ell_N-1}{2}\right)^2
        -2(k_N+\ell_N-3)},&N\notin\{3,5\}.
  \end{cases}
  \end{aligned}
  \label{eq:ngasket-rate-exponents}
\end{equation}
If \(\beta<\beta_{\mathrm c,N}\), then for every entry
\(\mathsf c_m(\beta)\) of \(\mathbf C_m^{(N)}(\beta)\),
\[
  0<
  \lim_{m\to\infty}
  (\Lambda_N^+)^{-m}\log\mathsf c_m(\beta)
  <\infty.
\]
If \(\beta>\beta_{\mathrm c,N}\), then
\[
  0<
  \lim_{m\to\infty}
  (\Lambda_N^-)^{-m}\bigl(-\log\mathsf c_m(\beta)\bigr)
  <\infty.
\]
\end{theorem}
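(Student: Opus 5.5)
The plan is to take logarithms of the crossing recursions of Proposition~\ref{prop:ngasket-recursions}. Off criticality, Theorem~\ref{thm:ngasket-dynamics} (proved in Appendix~\ref{app:ngasket-dynamics}) says that every entry tends to $0$ or to $\infty$. In either regime each polynomial factor is then dominated by a single monomial, up to a bounded multiplicative constant. The log-variables therefore satisfy $v_{m+1}=\mathsf Mv_m+e_m$ with $(e_m)$ bounded, and Lemma~\ref{lem:pf-asymptotics} gives $\Lambda^{-m}v_m\to cv_+$ with $c>0$ and $v_+>0$. That is exactly the assertion for the main variables. The remaining work is to identify $\mathsf M$ in each case, check that its Perron--Frobenius eigenvalue is $\Lambda_N^\pm$, and handle the auxiliary variables for $N=3,5$.

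\emph{Generic case $N\notin\{3,5\}$.} For $\beta<\beta_{\mathrm c,N}$, eventually $X_m,Y_m>1$. Then $Y^{k}+Y^{3k+\ell-2}=Y^{3k+\ell-2}(1+o(1))$, and $Y^{2k+\ell-3}+Y^{2k+1}$ is $Y^{2k+1}$ times a factor in $[1,2]$. Hence $(\log X_m,\log Y_m)$ obeys the recursion with $\mathsf M=\begin{pmatrix}2&3k+\ell-2\\2&2k+1\end{pmatrix}$. This matrix has trace $2k+3$ and determinant $-2(k+\ell-3)$, so its larger eigenvalue is $\Lambda_N^+$. For $\beta>\beta_{\mathrm c,N}$ we use $(-\log X_m,-\log Y_m)$, and the smallest exponents dominate. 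This gives $\mathsf M=\begin{pmatrix}2&k\\2&2k+\ell-3\end{pmatrix}$, with trace $2k+\ell-1$ and determinant $2(k+\ell-3)$, whose larger eigenvalue is $\Lambda_N^-$. Both matrices are positive, since $k\ge1$ and $2k+\ell-3\ge1$ once $N\notin\{3,5\}$; they are therefore primitive. For $N\in\{6,9\}$ the same computation applies; alternatively one can use the scalar map $F$ directly.

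\emph{Case $N=3$.} We use $r_m=y_m/x_m$ and \eqref{eq:sg-ratio}. In the escaping regime, the factor in \eqref{eq:sg-ratio} equals $1-O(x_m^{-1})$ and $x_m$ grows doubly exponentially, so $r_m\to r_\infty>0$. Hence $\log y_m=\log x_m+O(1)$, and $\log x_{m+1}=3\log x_m+O(1)$. The scalar version of the lemma then gives the rate $3$. In the decaying regime, $x_{m+1}=x_m^2(1+O(x_m+r_m))$, so $a_m=-\log x_m$ satisfies $a_{m+1}=2a_m+o(1)$ and $2^{-m}a_m\to c>0$. For $b_m=-\log y_m$, the relation $y_{m+1}=x_my_m(x_m+2y_m)$ gives $b_{m+1}=b_m+2a_m+O(1)$. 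Summing this, $2^{-m}b_m$ converges to a positive limit, namely $2c$.

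\emph{Case $N=5$.} We work with the main pair $(\xi_m,a_m)=(\theta_{1,m},\phi_{1,m})$ and \eqref{eq:pentagasket-dominant-pair}. In the escaping regime, the proof of Proposition~\ref{prop:pentagasket-phase} shows that $r_m,s_m$ converge to positive limits. Then $\xi_{m+1}\asymp a_m^2\xi_m^3$ and $a_{m+1}\asymp a_m^2\xi_m^2$, so $(\log\xi,\log a)$ has matrix $\begin{pmatrix}3&2\\2&2\end{pmatrix}$, whose Perron root is $(5+\sqrt{17})/2$. The other two entries differ from these by bounded logarithms. In the decaying regime, $\xi_{m+1}\asymp a_m^2$ and $a_{m+1}\asymp a_m^2\xi_m$. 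Thus $(-\log\xi,-\log a)$ has matrix $\begin{pmatrix}0&2\\1&2\end{pmatrix}$, which is primitive with Perron root $1+\sqrt3$. For the ratios, \eqref{eq:pentagasket-ratios} gives $-\log s_{m+1}=-\log s_m+3(-\log\xi_m)+O(1)$ and $-\log r_{m+1}=-\log s_m-\log\xi_m+O(1)$. Summing these geometric-type series, $-\log s_m$ and $-\log r_m$, and hence $-\log\theta_{2,m}$ and $-\log\phi_{2,m}$, are asymptotic to positive multiples of $(1+\sqrt3)^m$.

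\emph{Main obstacle.} I expect the main difficulty to be justifying that the error terms are bounded, especially in the $N=3$ and $N=5$ escaping regimes. There one must show that the ratios $r_m,s_m$ converge to strictly positive limits rather than to $0$. I would do this through the summability $\sum_m(1+H_m)^{-1}<\infty$, which follows from the doubly exponential growth of $\xi_m$.
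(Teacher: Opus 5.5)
Your proposal is correct and follows the paper's proof essentially step by step. It uses the same logarithmic linearizations and the same matrices (up to the order of coordinates), applies Lemma~\ref{lem:pf-asymptotics} in the primitive cases, and sums the same scalar inhomogeneous recursions for the auxiliary variables when \(N=3\) and \(N=5\). The only small difference is on the escaping side for \(N=3\): you use \(\log y_m=\log x_m+\log r_m\) with \(r_m\to r_\infty>0\) directly, whereas the paper uses a triangular recursion followed by summation; your route reaches the same limit slightly more quickly.
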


\begin{proof}
The phase diagram in Theorem~\ref{thm:ngasket-dynamics} ensures that all
crossing variables are eventually larger than one when
\(\beta<\beta_{\mathrm c,N}\), and eventually smaller
than one when \(\beta>\beta_{\mathrm c,N}\).  Thus all logarithms below have the
stated sign for sufficiently large \(m\).

Suppose first that \(N=4k+\ell\notin\{3,5\}\).  On the divergent side,
the larger power of \(Y_m\) in each sum in
\eqref{eq:ngasket-recursion} gives
\[
  \begin{pmatrix}\log X_{m+1}\\ \log Y_{m+1}\end{pmatrix}
  =
  \mathsf M_N^+
  \begin{pmatrix}\log X_m\\ \log Y_m\end{pmatrix}
  +e_m^+,
  \qquad
  \mathsf M_N^+
  =
  \begin{pmatrix}
  2&3k+\ell-2\\
  2&2k+1
  \end{pmatrix},
\]
where \(e_m^+\) is bounded; the error consists only of logarithms of
factors of the form \(1+Y_m^{-a}\), with \(a>0\).  On the side
converging to zero, the smaller power of \(Y_m\) gives
\[
  \begin{pmatrix}-\log X_{m+1}\\ -\log Y_{m+1}\end{pmatrix}
  =
  \mathsf M_N^-
  \begin{pmatrix}-\log X_m\\ -\log Y_m\end{pmatrix}
  +e_m^-,
  \qquad
  \mathsf M_N^-
  =
  \begin{pmatrix}
  2&k\\
  2&2k+\ell-3
  \end{pmatrix},
\]
where \(e_m^-\) is bounded for the same reason, now with factors of
the form \(1+Y_m^a\).  Both matrices are primitive.  Direct
calculation of their characteristic polynomials gives the two
Perron--Frobenius eigenvalues
\(\Lambda_N^\pm\) in
\eqref{eq:ngasket-rate-exponents}.  In particular,
\(\det\mathsf M_N^-=2(k+\ell-3)\), which produces the minus sign under
the square root in the formula for \(\Lambda_N^-\).
Lemma~\ref{lem:pf-asymptotics} now gives the required limit for both
crossing variables.

For \(N=3\), put \(r_m=y_m/x_m\) as in
\eqref{eq:sg-ratio}.  On the escaping side, \(r_m\) decreases to a
positive limit.  Indeed, once \(x_m>1\),
\[
  0<
  1-\frac{r_{m+1}}{r_m}
  =
  \frac{(1+r_m)^2}
       {(1+r_m)^2+x_m(1+2r_m)}
  \le\frac4{x_m},
\]
whereas \(x_{m+1}\ge x_m^3\).  Therefore
\(\sum_m(1-r_{m+1}/r_m)<\infty\), and the product
\(\prod_m(r_{m+1}/r_m)\) is positive.  Since \(r_m\) is decreasing,
it follows that \(r_m\) converges to a positive limit.  The logarithmic
recursion is consequently
\[
  \begin{pmatrix}\log x_{m+1}\\ \log y_{m+1}\end{pmatrix}
  =
  \begin{pmatrix}3&0\\2&1\end{pmatrix}
  \begin{pmatrix}\log x_m\\ \log y_m\end{pmatrix}
  +O(1).
\]
On the side converging to zero, \(r_m\to0\) and
\[
  \begin{pmatrix}-\log x_{m+1}\\ -\log y_{m+1}\end{pmatrix}
  =
  \begin{pmatrix}2&0\\2&1\end{pmatrix}
  \begin{pmatrix}-\log x_m\\ -\log y_m\end{pmatrix}
  +O(1).
\]
In both recursions, the first component has the form
\(\upsilon_{m+1}=\Lambda\upsilon_m+O(1)\), where
\(\upsilon_m\to\infty\) and
\(\Lambda=3\) or \(2\).  The elementary one-dimensional version of
Lemma~\ref{lem:pf-asymptotics} gives a finite positive limit for
\(\Lambda^{-m}\upsilon_m\); denote it by \(c_\upsilon>0\).  If
\(\omega_m\) denotes the second component, then
\[
  \omega_{m+1}=\omega_m+2\upsilon_m+O(1),
\]
and hence
\[
  \omega_m
  =
  \omega_{m_0}
  +2\sum_{j=m_0}^{m-1}\upsilon_j+O(m).
\]
It follows that
\[
  \Lambda^{-m}\omega_m
  \longrightarrow
  \frac{2c_\upsilon}{\Lambda-1}>0.
\]
This proves
\(\Lambda_3^+=3\) and \(\Lambda_3^-=2\) in
\eqref{eq:ngasket-rate-exponents}.

Finally, let \(N=5\), and use the notation
\(a_m,b_m,\xi_m,\widehat\xi_m,r_m,s_m\) from
Proposition~\ref{prop:pentagasket-phase}.  On the escaping side,
the proof of Proposition~\ref{prop:pentagasket-phase} shows that
\(s_m\) has a positive limit and that \(r_m\) has the same limit.
Taking logarithms in
\eqref{eq:pentagasket-dominant-pair} therefore gives
\[
  \begin{pmatrix}\log a_{m+1}\\ \log \xi_{m+1}\end{pmatrix}
  =
  \begin{pmatrix}2&2\\2&3\end{pmatrix}
  \begin{pmatrix}\log a_m\\ \log \xi_m\end{pmatrix}
  +O(1).
\]
On the side converging to zero, \(\xi_m\to0\).  Since
\(r_m,s_m\le1\), \eqref{eq:pentagasket-ratios} gives
\[
  H_m\le12\xi_m^3\longrightarrow0,
  \qquad
  s_{m+1}\le H_m\longrightarrow0,
  \qquad
  r_{m+1}\le s_m\longrightarrow0.
\]
Thus \(r_m,s_m\to0\), and
\eqref{eq:pentagasket-dominant-pair} gives
\[
  \begin{pmatrix}-\log a_{m+1}\\ -\log \xi_{m+1}\end{pmatrix}
  =
  \begin{pmatrix}2&1\\2&0\end{pmatrix}
  \begin{pmatrix}-\log a_m\\ -\log \xi_m\end{pmatrix}
  +O(1).
\]
The two Perron--Frobenius eigenvalues are
\((5+\sqrt{17})/2\) and \(1+\sqrt3\), respectively.
Lemma~\ref{lem:pf-asymptotics} gives the stated limits for
\(a_m\) and \(\xi_m\).  On the escaping side, the positive limits of
\(r_m\) and \(s_m\) give the same limits for \(b_m\) and
\(\widehat\xi_m\).
On the side converging to zero,
\eqref{eq:pentagasket-ratios} gives
\[
  -\log s_{m+1}
  =
  -\log s_m+3(-\log \xi_m)+O(1),
  \qquad
  -\log r_{m+1}
  =
  -\log s_m-\log \xi_m+O(1).
\]
Write
\[
  (\Lambda_5^-)^{-m}(-\log \xi_m)\longrightarrow\gamma_\xi>0.
\]
The first relation is a scalar inhomogeneous recursion.  Summing it
and using the limit for \(-\log \xi_m\) gives
\[
  (\Lambda_5^-)^{-m}(-\log s_m)
  \longrightarrow
  \frac{3\gamma_\xi}{\Lambda_5^--1}>0.
\]
Substitution in the second relation then gives a finite positive limit
for
\((\Lambda_5^-)^{-m}(-\log r_m)\).  Since
\[
  -\log b_m=-\log a_m-\log r_m,
  \qquad
  -\log \widehat\xi_m=-\log \xi_m-\log s_m,
\]
the two remaining crossing variables have finite positive normalized
limits as well.  This completes the proof.
\end{proof}

\smallskip
\noindent
\textsc{School of Mathematics, Nanjing University, Nanjing, 210093,
P. R. China.}\\
\textit{Email address:} \texttt{huaqiu@nju.edu.cn}

\smallskip
\noindent
\textsc{School of Mathematics, Nanjing University, Nanjing, 210093,
P. R. China.}\\
\textit{Email address:} \texttt{602025210014@smail.nju.edu.cn}

\end{document}